\newcommand{\Hom}{\mathrm{Hom}}
\newcommand{\Ext}{\mathrm{Ext}}
\newcommand{\Ker}{\mathrm{Ker}}
\newcommand{\coker}{\mathrm{Coker}}
\newcommand{\leib}{\mathrm{Leib}}
\newcommand{\lie}{\mathrm{Lie}}
\newcommand{\CL}{\mathrm{CL}}
\newcommand{\HL}{\mathrm{HL}}
\newcommand{\K}{\mathbb{K}}
\newcommand{\F}{\mathbb{F}}
\newcommand{\C}{\mathbb{C}}
\theoremstyle{remark}
\newtheorem{rem}{Remark}[subsection]
\theoremstyle{definition}
\newtheorem{defi}{Definition}[subsection]
\newtheorem{theo}{Theorem}[subsection]
\newtheorem{corol}{Corollary}[subsection]
\newtheorem{lem}{Lemma}[subsection]
\newtheorem{prop}{Proposition}[subsection]
\begin{document}

\title{On the Gabriel quiver of extensions of Leibniz algebras}

\author{Ziwendtaor\'e Hermann Bamogo}

\author{Friedrich Wagemann}
\address{Laboratoire de math\'ematiques Jean Leray, UMR 6629 du CNRS, Universit\'e
de Nantes, 2, rue de la Houssini\`ere, F-44322 Nantes Cedex 3, France}
\email{wagemann@math.univ-nantes.fr}

\subjclass[2010]{Primary 17A32; Secondary 17B56}

\keywords{Leibniz cohomology, simple Leibniz algebra, Leibniz bimodule}

          
\maketitle


\begin{abstract}
We compute the Gabriel quiver of simple objects in the category of bimodules over a simple Leibniz algebra and over the trivial $1$-dimensional Leibniz algebra. Vertices of the quiver are the classes of simple objects, arrows are given by the dimensions of $\Ext^1$-groups. 
\end{abstract}


\section{Introduction}

The notion of Leibniz algebra ${\mathfrak h}$ is a generalization of the notion of Lie algebra. The natural module theory for Leibniz algebras is a theory of bimodules. In this article, we are interested in the category of (finite dimensional) bimodules over a (finite dimensional) Leibniz algebra, more precisely in the Ext-groups in this category. Loday and Pirashvili performed the first computation for the Ext-groups $\Ext_{UL\left( \mathfrak{h} \right)}^{n}\left( M, N \right)$ between simple finite-dimensional Leibniz $\mathfrak{h}$-bimodules $M$ and $N$ over a simple Lie algebra $\mathfrak{g}=\mathfrak{h}$ (see \cite{JL-TP1}). Here $UL\left( \mathfrak{h} \right)$ is the enveloping Leibniz algebra (in the sense of Loday-Pirashvili, see Definition \ref{envelop}). The Leibniz ${\mathfrak h}$-bimodules then become simply the left $UL\left( \mathfrak{h} \right)$-modules in the usual sense.

Later, Mugni\'ery and Wagemann (see \cite{JM-FW2}) have computed the Ext-groups for a simple (non-Lie) Leibniz algebra $\mathfrak{h}$, following cohomology computations of Feldvoss-Wagemann (see \cite{JF-FW1}).
One feature of these computations is that they permit an explicit computation of the Gabriel quiver of these categories of bimodules. The {\it Gabriel quiver} is a directed graph whose vertices are the isomorphism classes of simple objects of the category of bimodules, and such that for two simple objects $M$ and $N$, the quiver has exactly $\dim\Ext_{UL\left( \mathfrak{h} \right)}^{1}\left( M, N \right)$ arrows from (the class of) $M$ to (the class of) $N$.

In this article, we compute the Ext-groups for the trivial $1$-dimensional complex Lie/Leibniz algebra $\mathfrak{h}=\mathbb{C}$, based on the cohomology computations of Feldvoss-Wagemann (see \cite{JF-FW2}). These computations are then used for the computation of the Gabriel quiver of this category of bimodules. 
The computations are, to our knowledge, the first computations of the Ext-groups for a non-simple Leibniz algebra. The Ext-dimension of the category of bimodules for the trivial $1$-dimensional complex Lie/Leibniz algebra is infinite. 

The computations of the Gabriel quiver is based on the following steps:
\begin{enumerate}
\item[$\bullet$] Compute the cohomology of $\mathfrak{h}$ with values in all symmetric bimodules $M^{s}$ and all antisymmetric bimodules $M^{a}$ for a simple left $\mathfrak{h}_{\lie}$-module $M$.
\item[$\bullet$] Use the change-of-ring spectral sequences (provided by Loday-Pirashvili in \cite{JL-TP1} and Mugni\'ery-Wagemann in \cite{JM-FW2}) to deduce the Ext-groups from the cohomology, showing that the spectral sequences collapse at the second page.
\item[$\bullet$] Use the computation of the Ext-groups to deduce the Gabriel quiver.
\end{enumerate}
In a second part of this article, we compute the Gabriel quiver for the hemi-semidirect product Leibniz algebra $\mathfrak{h}= V_{n} \times_{hs}\mathfrak{sl}_{2}(\C)$ for which the Ext-groups have been computed in \cite{JM-FW2}, and we complete the main theorem of \cite{JM-FW2}. 

The content of this article is as follows. We first recall in Section 2 some definitions and properties of Leibniz algebras and their bimodules. We then compute in Section 3 the Gabriel quiver for the trivial $1$-dimensional Leibniz algebra $\mathfrak{h}=\mathbb{C}$ (see Theorem \ref{comput}). After that, we compute in Section 4 the Gabriel quiver for the hemi-semidirect product $\mathfrak{h}= V_{n} \times_{hs}\mathfrak{sl}_{2}(\C)$. In the course of the computation, we give a complement to the main theorem of Mugniéry-Wagemann (see Theorem \ref{complement}). 

{\bf Acknowledgements:} This works constitutes Z. H. Bamogo's master thesis at Nantes Universit\'e which we thank for the financial support. This work was conducted within the France 2030 framework program, Centre Henri Lebesgue ANR-11-LABX-0020-01.

\section{Leibniz Algebras and their properties}

\subsection{Leibniz algebras}

Let $\K$ be a (commutative) field. We will suppose after the preliminaries that $\K=\C$, the field of complex numbers (especially for all structure results on simple Leibniz algebras and all results on representation theory). 

\begin{defi}
A left Leibniz algebra $\mathfrak{h}$ over a field $\K$ is a vector space endowed with a bilinear map
$$ \left[- ,- \right] : \mathfrak{h} \times \mathfrak{h} \longrightarrow \mathfrak{h}$$
such that $\forall$ $x,y,z \in \mathfrak{h}$ 
$$\left[ x , \left[ y, z \right] \right]= \left[ \left[ x, y \right],z  \right] + \left[y, \left[ x, z \right] \right].$$ 
\end{defi}

Similarly, one can define a right Leibniz algebra. In this case, Leibniz identity is the following:
$\forall$ $x,y,z \in {\mathfrak h}$
$$ [[x,y],z]=[[x,z],y] + [x,[y,z]].$$
Observe that Loday and Pirashvili always worked with right Leibniz algebras. 
In this work, we only consider left Leibniz algebras and we will just call them Leibniz algebras. In order to distinguish in notation Lie algebras from Leibniz algebras, we will denote the former by ${\mathfrak g}$, while we denote the latter by ${\mathfrak h}$. 

\begin{defi}
Every Leibniz algebra $\mathfrak{h}$ has a special ideal called the {\it Leibniz kernel} or the {\it ideal of squares} of $\mathfrak{h}$. We denote it by $\leib \left( \mathfrak{h} \right)$:
$$ \leib \left( \mathfrak{h} \right)= \langle \left[ x, x \right] \mid x \in \mathfrak{h} \rangle_{\K} $$
\end{defi}  

\begin{defi}
To every Leibniz algebra $\mathfrak{h} $ is associated its canonical Lie algebra. We denote it by $\mathfrak{h}_{\lie}$:
$$\mathfrak{h}_{\lie}= \mathfrak{h}/\leib \left( \mathfrak{h} \right) $$
\end{defi}

\begin{defi}
Let $\mathfrak{g}$ be a Lie algebra and $M$ a $\mathfrak{g}$-module. The direct sum $M \oplus \mathfrak{g}$ becomes a Leibniz algebra, called the {\it hemi-semidirect product} of ${\mathfrak g}$ and $M$, by endowing it with the following bracket operation.
For $x,y \in \mathfrak{g}$ and $a,b \in M$
$$\left[ \left(a,x \right), \left(b,y \right) \right]= \left(x.b, \left[x,y \right] \right)  $$

We denote the hemi-semidirect product of $\mathfrak{g}$ and $M$  by  $M \times_{hs}\mathfrak{g}$
\end{defi}

\begin{defi}
A Leibniz algebra $\mathfrak{h}$ is said to be \it{simple} if $0$, $\mathfrak{h}$ and $\leib \left( \mathfrak{h} \right)$ are the only two-sided ideals of $\mathfrak{h}$ with the condition $\leib \left( \mathfrak{h} \right) \subsetneqq \left[ \mathfrak{h}, \mathfrak{h} \right] $. 
\end{defi}

\begin{prop}[See Proposition 7.2, in \cite{JF}]
Let $\mathfrak{h}$ be a simple Leibniz algebra. Then $\mathfrak{h}_{\lie}$ is a simple Lie algebra and $\leib \left( \mathfrak{h} \right)$ is a simple $\mathfrak{h}_{\lie}$-module. Conversely, the hemi-semidirect product of a simple Lie algebra ${\mathfrak g}$ and a simple ${\mathfrak g}$-module $M$ is a simple Leibniz algebra.  
\end{prop}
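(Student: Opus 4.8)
The plan is to prove the two implications separately; both rely on the elementary identity $[[x,x],z]=0$, valid in any left Leibniz algebra (take $y=x$ in the Leibniz identity), on its polarization $[\leib(\mathfrak{h}),\mathfrak{h}]=0$, and on the standard facts that $\leib(\mathfrak{h})$ and $[\mathfrak{h},\mathfrak{h}]$ are two-sided ideals and that ideals of a quotient $\mathfrak{h}/\mathfrak{a}$ correspond to ideals of $\mathfrak{h}$ containing $\mathfrak{a}$. Note first that if $\leib(\mathfrak{h})=0$ then $\mathfrak{h}=\mathfrak{h}_{\lie}$ is a simple Lie algebra and the statement reduces to its non-Lie content; accordingly I assume $\leib(\mathfrak{h})\neq 0$ in the first implication.

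For the implication ``$\mathfrak{h}$ simple $\Rightarrow$ $\mathfrak{h}_{\lie}$ simple and $\leib(\mathfrak{h})$ simple'', I would first remark that $[\mathfrak{h},\mathfrak{h}]$ is a two-sided ideal strictly containing $\leib(\mathfrak{h})$, hence is neither $0$ nor $\leib(\mathfrak{h})$, so $[\mathfrak{h},\mathfrak{h}]=\mathfrak{h}$; thus $\mathfrak{h}$ is perfect and in particular $\leib(\mathfrak{h})\subsetneq\mathfrak{h}$. Since the only two-sided ideals of $\mathfrak{h}$ containing $\leib(\mathfrak{h})$ are $\leib(\mathfrak{h})$ and $\mathfrak{h}$, the Lie algebra $\mathfrak{h}_{\lie}$ is nonzero, perfect, and has no proper nonzero ideal, hence is simple (perfectness excludes the abelian $1$-dimensional case). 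Finally, $[\leib(\mathfrak{h}),\mathfrak{h}]=0$ makes the left $\mathfrak{h}$-action on the subbimodule $\leib(\mathfrak{h})$ factor through $\mathfrak{h}_{\lie}$, and a subspace $N\subseteq\leib(\mathfrak{h})$ is an $\mathfrak{h}_{\lie}$-submodule iff $[\mathfrak{h},N]\subseteq N$, iff (since $[N,\mathfrak{h}]\subseteq[\leib(\mathfrak{h}),\mathfrak{h}]=0$ automatically) $N$ is a two-sided ideal of $\mathfrak{h}$; simplicity together with $N\subseteq\leib(\mathfrak{h})\subsetneq\mathfrak{h}$ then forces $N\in\{0,\leib(\mathfrak{h})\}$, so $\leib(\mathfrak{h})$ is a simple $\mathfrak{h}_{\lie}$-module.

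For the converse, let $\mathfrak{g}$ be a simple Lie algebra and $M$ a nonzero simple $\mathfrak{g}$-module; since a trivial $M$ would make $M\oplus 0$ a proper ideal of $M\times_{hs}\mathfrak{g}$ distinct from the Leibniz kernel, the relevant hypothesis is that $M$ is nontrivial, equivalently $\mathfrak{g}\cdot M=M$ (this is exactly the condition under which $M\times_{hs}\mathfrak{g}$ is non-Lie). Set $\mathfrak{h}=M\times_{hs}\mathfrak{g}$. From $[(a,x),(a,x)]=(x.a,0)$ one reads $\leib(\mathfrak{h})=(\mathfrak{g}\cdot M)\oplus 0=M\oplus 0$, and from $[(0,x),(b,0)]=(x.b,0)$ and $[(0,x),(0,y)]=(0,[x,y])$ that $[\mathfrak{h},\mathfrak{h}]=\mathfrak{h}$, so $\leib(\mathfrak{h})=M\oplus 0\subsetneq\mathfrak{h}=[\mathfrak{h},\mathfrak{h}]$. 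To classify the two-sided ideals $I$ of $\mathfrak{h}$, I would use the surjective Leibniz homomorphism $\pi\colon\mathfrak{h}\to\mathfrak{g}$, $(a,x)\mapsto x$: the image $\pi(I)$ is an ideal of $\mathfrak{g}$, so $\pi(I)=0$ or $\pi(I)=\mathfrak{g}$. If $\pi(I)=0$, then $I\subseteq M\oplus 0$, on which the right $\mathfrak{h}$-action is trivial and the left one is the $\mathfrak{g}$-module structure of $M$; thus $I$ is a $\mathfrak{g}$-submodule of $M$ and hence equals $0$ or $M\oplus 0$. If $\pi(I)=\mathfrak{g}$, then picking for each $x\in\mathfrak{g}$ an element $(a,x)\in I$ and forming $[(a,x),(b,0)]=(x.b,0)$ gives $(\mathfrak{g}\cdot M)\oplus 0=M\oplus 0\subseteq I$, whence $0\oplus\mathfrak{g}\subseteq I$ and $I=\mathfrak{h}$. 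Hence $0$, $\leib(\mathfrak{h})=M\oplus 0$ and $\mathfrak{h}$ are the only two-sided ideals, and $\mathfrak{h}$ is simple.

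I expect the only points requiring care to be organizational: keeping straight the left versus right actions — in particular the identity $[\leib(\mathfrak{h}),\mathfrak{h}]=0$ peculiar to left Leibniz algebras, which is precisely what identifies ``$\mathfrak{h}_{\lie}$-submodule of $\leib(\mathfrak{h})$'' with ``two-sided ideal of $\mathfrak{h}$ contained in $\leib(\mathfrak{h})$'' — and making the non-Lie / nontrivial-module conventions in the two directions consistent with each other.
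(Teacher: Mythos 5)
Your proof is correct. Note that the paper itself gives no proof of this proposition: it is quoted as Proposition 7.2 of Feldvoss's survey \cite{JF}, so there is no in-text argument to compare yours against. Your argument is the standard one and is complete: the key identity $[[x,x],z]=0$ (hence $[\leib(\mathfrak{h}),\mathfrak{h}]=0$) correctly identifies $\mathfrak{h}_{\lie}$-submodules of $\leib(\mathfrak{h})$ with two-sided ideals of $\mathfrak{h}$ contained in $\leib(\mathfrak{h})$, the ideal correspondence for the quotient $\mathfrak{h}_{\lie}=\mathfrak{h}/\leib(\mathfrak{h})$ together with perfectness of $\mathfrak{h}$ gives simplicity of $\mathfrak{h}_{\lie}$, and the ideal classification in $M\times_{hs}\mathfrak{g}$ via the projection $\pi$ is carried out correctly. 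You are also right to flag the two degenerate cases that the statement as printed glosses over: if $\leib(\mathfrak{h})=0$ the first assertion is vacuous (the zero module is not simple), and in the converse the simple module $M$ must be nontrivial, since otherwise $M\oplus 0$ is a proper ideal distinct from $\leib(M\times_{hs}\mathfrak{g})=0$; these hypotheses are implicit in Feldvoss's formulation. No gaps.
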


\subsection{Leibniz bimodules}

The natural concept of a module for Leibniz algebras is a {\it bimodule}:

\begin{defi}
Let $\mathfrak{h}$ be a Leibniz algebra over a field $\K$. A left $\mathfrak{h}$-module is a vector space $M$ over $\K$ endowed with the following $\K$-bilinear left operation
$$\mathfrak{h} \times M \longrightarrow M \quad \left( x,m \right)\mapsto x\cdot m,  $$

satisfying 
$$\left[ x, y \right]\cdot m= x\cdot ( y\cdot m) - y\cdot (x\cdot m)$$ 
$\forall m \in M$ $\forall x,y \in \mathfrak{h}$.
\end{defi}

\begin{defi} \label{r and l }
An $\mathfrak{h}$-bimodule is a vector space $M$ with a left operation and a right operation of $\mathfrak{h}$ on $M$ satisfying the following conditions:

$$ (LLM) \quad \left[ x, y \right]\cdot m = x\cdot \left( y\cdot m \right)- y\cdot \left( x\cdot m \right)$$
$$ (LML) \quad \left( x\cdot m \right)\cdot y = x\cdot \left( m\cdot y \right)- m\cdot \left[ x, y \right]$$
$$ (MLL) \quad \left( m\cdot x \right)\cdot y = m\cdot \left[ x, y \right] - x\cdot \left( m\cdot y \right)$$

$\forall m \in M$ \quad $\forall x,y \in \mathfrak{h}$.
\end{defi}

 As a consequence of $(LLM)$, every Leibniz bimodule is a Leibniz module. In fact, if $M$ is an $\mathfrak{h}$-module, $M$ has a natural $\mathfrak{h}_{\lie}$-module structure and vice-versa (See Lemma 3.3, in \cite{JF} ).

\begin{rem}
Given an $\mathfrak{h}_{\lie}$-module $M$, there are two ways to induce a Leibniz $\mathfrak{h}$-bimodule:
Setting  $m.x=-x.m \quad \forall m \in M \quad \forall x \in \mathfrak{h} $,
one obtains a {\it symmetric bimodule} $M^{s}$.
Setting  $m.x=0 \quad \forall m \in M \quad \forall x \in \mathfrak{h} $,
one obtains an {\it antisymmetric bimodule} $M^{a}$.
\end{rem}

Let $M$ be an arbitrary ${\mathfrak h}$-bimodule. The {\it antisymmetric kernel} of $M$ is denoted $M_{0}$ and defined by 
$$M_{0}= \langle x.m + m.x \mid x \in \mathfrak{h} \quad m \in M  \rangle.$$
The subspace $M_{0}$ is an antisymmetric subbimodule and the quotient $M^{sym}:=M/M_{0}$ is symmetric. (see Proposition 3.12 and Proposition 3.13, in \cite{JF}).
This yields for any ${\mathfrak h}$-bimodule $M$  an exact sequence
$$ 0 \rightarrow M_{0} \rightarrow M \rightarrow M^{sym}=M/M_{0} \rightarrow 0.$$
As a consequence, we obtain the following structure result on simple ${\mathfrak h}$-bimodules: 

\begin{lem}[Loday-Pirashvili]
Let $\mathfrak{h}$ be a Leibniz algebra and $M$ an $\mathfrak{h}$-bimodule. Then\\
$M$ is simple $\Leftrightarrow$ $M$ is symmetric or antisymmetric for a simple $\mathfrak{h}_{\lie}$-module $M$.
\end{lem}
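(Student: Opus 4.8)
The plan is to prove both directions of the equivalence, working with the exact sequence $0 \to M_0 \to M \to M^{sym} \to 0$ as the central tool, together with the fact (cited above from \cite{JF}) that $M_0$ is antisymmetric and $M^{sym}$ is symmetric, and that the underlying $\mathfrak{h}_{\lie}$-module structure determines (and is determined by) the $\mathfrak{h}$-module structure.

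First I would prove the ($\Leftarrow$) direction. Suppose $M = V^s$ or $M = V^a$ for a simple $\mathfrak{h}_{\lie}$-module $V$. A sub-bimodule $N \subseteq M$ is in particular an $\mathfrak{h}$-submodule, hence an $\mathfrak{h}_{\lie}$-submodule of $V$; since $V$ is simple as an $\mathfrak{h}_{\lie}$-module, $N = 0$ or $N = V$ as a vector space. In either case ($V^s$ or $V^a$) the right action on $N$ is the restriction of the right action on $M$, so $N$ is automatically a sub-bimodule; therefore $M$ has no proper nonzero sub-bimodules and is simple.

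Next I would prove the ($\Rightarrow$) direction. Let $M$ be simple as an $\mathfrak{h}$-bimodule. Apply the exact sequence: $M_0$ is a sub-bimodule, so by simplicity $M_0 = 0$ or $M_0 = M$. If $M_0 = M$, then $M$ is antisymmetric (since $M_0$ is antisymmetric), and then as an $\mathfrak{h}_{\lie}$-module $M$ has no proper nonzero submodule (any such would give a proper nonzero sub-bimodule in the antisymmetric case, exactly as in the $\Leftarrow$ argument), so $M = V^a$ for a simple $\mathfrak{h}_{\lie}$-module $V$. If $M_0 = 0$, then $M = M^{sym}$ is symmetric; again its $\mathfrak{h}_{\lie}$-submodules coincide with its sub-bimodules, so $M = V^s$ for a simple $\mathfrak{h}_{\lie}$-module $V$. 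Finally one should check that $M$ is genuinely nonzero/simple as an $\mathfrak{h}_{\lie}$-module rather than trivial in a degenerate sense, but this is immediate since a simple bimodule is by definition nonzero.

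The only genuinely delicate point — and the one I would state carefully rather than treat as routine — is the claim that in the symmetric and antisymmetric cases an $\mathfrak{h}$-subbimodule is the same thing as an $\mathfrak{h}_{\lie}$-submodule of the underlying space. For the antisymmetric module this is clear since the right action is zero on every subspace. For the symmetric module $V^s$ one must observe that if $N \subseteq V$ is stable under the left action then $N \cdot x = -x \cdot N \subseteq N$, so $N$ is right-stable as well; hence $N$ is a sub-bimodule. Thus the lattice of sub-bimodules of $M^s$ (resp.\ $M^a$) coincides with the lattice of $\mathfrak{h}_{\lie}$-submodules of $V$, and the equivalence follows. I expect no real obstacle beyond bookkeeping, since the hard structural inputs ($M_0$ antisymmetric, $M^{sym}$ symmetric, and the $\mathfrak{h}$/$\mathfrak{h}_{\lie}$-module correspondence) are quoted from \cite{JF}.
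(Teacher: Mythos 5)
Your proof is correct and follows exactly the route the paper indicates: it states the lemma as a consequence of the exact sequence $0 \to M_{0} \to M \to M^{sym} \to 0$ (with $M_{0}$ antisymmetric and $M^{sym}$ symmetric, quoted from \cite{JF}), and your argument simply makes that deduction explicit, including the key observation that for symmetric and antisymmetric bimodules the sub-bimodules coincide with the $\mathfrak{h}_{\lie}$-submodules. No gaps.
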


\subsection{Enveloping algebra}

Loday and Pirashvili have defined the universal enveloping algebra of a Leibniz algebra, denoted $UL\left( \mathfrak{h} \right)$, such that the left $UL\left( \mathfrak{h} \right)$-modules are exactly the Leibniz $\mathfrak{h}$-bimodules (see Theorem 2.3, in \cite{JL-TP2}).

\begin{defi} \label{envelop}(For the right setting, see \cite{JL-TP1}, for the left setting, see Definition 1.8, in \cite{JM-FW2})
Let $\mathfrak{h}$ be a Leibniz algebra. Given two copies $\mathfrak{h}^{l}$ and $\mathfrak{h}^{r}$ of $\mathfrak{h}$ generated respectively by the elements $l_{x}$ and $r_{x}$ for $x \in \mathfrak{h}$, the {\it universal enveloping algebra} is defined as the unital associative algebra
$$UL\left( \mathfrak{h} \right)= T\left( \mathfrak{h}^{l} \oplus \mathfrak{h}^{r}\right)/J $$
where 
$$T\left( \mathfrak{h}^{l} \oplus \mathfrak{h}^{r}\right)= \bigoplus_{n=0}^{\infty} \left( \mathfrak{h}^{l} \oplus \mathfrak{h}^{r}\right)^{\otimes n}$$
 is the tensor algebra of $\mathfrak{h}^{l} \oplus \mathfrak{h}^{r}$ and $J$ is the two-sided ideal of 
 $T\left( \mathfrak{h}^{l} \oplus \mathfrak{h}^{r}\right)$ generated by the elements:
 
 \begin{enumerate}
 \item[(i)] $l_{\left[ x, y \right]}- l_{x}\otimes l_{y} + l_{y}\otimes l_{x}$ ,
\item[(ii)]  $r_{\left[ x, y \right]}- l_{x}\otimes r_{y} + r_{y}\otimes l_{x}$, 
\item[(iii)] $r_{y}\otimes \left( l_{x} + r_{x}\right)$.
\end{enumerate}

\end{defi} 

\begin{rem}
Observe that $l_{x}$ and $r_{x}$ correspond respectively to the left operation and the right operation in Definition $\ref{r and l }$. Relation (iii) corresponds to the sum of the relations (LML) and (MLL). 
\end{rem}

\subsection{Leibniz cohomology}

We define the cohomology of Leibniz algebras here for the left setting (see \cite{JM-FW2}). For the right setting, see \cite{JL-TP2}.

Let $\mathfrak{h}$ be a Leibniz algebra and $M$ be an $\mathfrak{h}$-bimodule.
Denote $\CL^{n}\left( \mathfrak{h},M \right):=\Hom\left( \mathfrak{h}^{\otimes n},M \right)$ and 
$d^{n}: \CL^{n}\left( \mathfrak{h},M \right)\longrightarrow \CL^{n+1}\left( \mathfrak{h},M \right)$. 
The $\K$-linear map $d^{n}$ is defined as follows: 

\begin{align*}
d^{n}f\left( x_{0},\ldots,x_{n} \right)&=& \sum_{i=0}^{n-1}\left( -1 \right)^{i} x_{i}\cdot f\left( x_{0},\ldots,\widehat{x_{i}},\ldots,x_{n} \right) + \left( -1 \right)^{n-1}  f\left( x_{0},\ldots,x_{n-1}\right)\cdot x_{n} +\\
&+&\sum_{0\leq i < j \leq n}\left( -1 \right)^{i+1} f\left( x_{0},\ldots,\widehat{x_{i}},\ldots,x_{j-1},\left[ x_{i}, x_{j} \right],x_{j+1},\ldots, x_{n} \right),
\end{align*}

and we have  $d^{n+1} \circ d^{n}=0$. Therefore $\lbrace \CL^{n}\left( \mathfrak{h},M \right), d^{n} \rbrace_{n\geq0}$ is a cochain complex and its cohomology, denoted $\HL^n({\mathfrak h},M)$, is the cohomology of $\mathfrak{h}$ with coefficients in $M$.

By definition $\CL^{0}\left( \mathfrak{h},M \right)=M$ and $d^{0}m\left( x \right)= - m\cdot x $, thus
$\HL^{0}\left( \mathfrak{h},M \right)$ is the subspace of right invariants, also denoted $M^{\mathfrak h}$. For example, if $M$ is antisymmetric, $\HL^{0}\left( \mathfrak{h},M \right)=M$.


\section{Gabriel quiver for the trivial Lie/Leibniz algebra}

The construction of the Gabriel quiver is based on the computation of Ext-groups between simple bimodules. So, we will first collect some results in order to compute the Ext-groups for the trivial Lie/Leibniz algebra ${\mathfrak h}=\K$.
 
\subsection{Simple bimodules of the trivial Lie/Leibniz algebra}

\begin{defi}
Let $\mathfrak{h}=\K$ be the trivial Lie/Leibniz algebra. Let $e=1 \in \K$ such that $\K=\langle e \rangle$. Let $M$ be a $\K$-bimodule. The bimodule $M$ is a {\it trivial} $\K$-bimodule  if:
$$e.m=0 \quad {\rm and} \quad m.e=0 \quad \forall m \in M.$$
The bimodule $M$ is a {\it non-trivial} $K$-bimodule if:
$$\exists m \in M \quad e.m \neq 0 .$$
\end{defi}

It follows from the existence of eigenvectors that the simple $\K$-bimodules when $\K=\mathbb{C}$ are $1$-dimensional, that the simple $\K$-bimodules are $1$- or $2$-dimensional if $\K=\mathbb{R}$, and that there are simple $\K$-bimodules of any dimension for $\K=\mathbb{Q}$ and $\K=\F_{q}$ (the finite field of $q=p^n$ elements).

\begin{lem} \label{schur}[Schur's Lemma]
Let ${\mathfrak{g}}$ be a Lie algebra over $\mathbb{C}$ with universal enveloping algebra $U{\mathfrak{g}}$ (in the usual sense). Let $V$ and $W$ be two simple $U{\mathfrak{g}}$-modules. Then

$$\dim \Hom_{U{\mathfrak{g}}}\left( V, W \right) =
\begin{cases}
0 \quad {\rm if} \quad V \not \cong W  \\
1 \quad {\rm if} \quad V \cong W 
\end{cases}$$
\end{lem}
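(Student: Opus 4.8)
The plan is to prove the standard Schur's Lemma, adapted to the module category over an associative $\mathbb{C}$-algebra $U{\mathfrak g}$, using that $\mathbb{C}$ is algebraically closed and that the simple modules in question are finite-dimensional (which is implicit in the ambient setting, since all bimodules considered in the paper are finite-dimensional). First I would observe that any nonzero $U{\mathfrak g}$-module homomorphism $\varphi\colon V\to W$ between simple modules is automatically an isomorphism: its kernel is a submodule of $V$, hence $0$ or $V$, and since $\varphi\neq 0$ the kernel is $0$; its image is a submodule of $W$, hence $0$ or $W$, and since $\varphi\neq 0$ the image is $W$. Consequently $\varphi$ is bijective, so $\Hom_{U{\mathfrak g}}(V,W)=0$ whenever $V\not\cong W$, which handles the first case.

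For the second case, it suffices by the first paragraph to compute $\dim\End_{U{\mathfrak g}}(V)$ for a simple module $V$ (replacing $W$ by $V$ via a fixed isomorphism). Let $\varphi\in\End_{U{\mathfrak g}}(V)$. Since $V$ is a finite-dimensional vector space over the algebraically closed field $\mathbb{C}$, the linear endomorphism $\varphi$ has an eigenvalue $\lambda\in\mathbb{C}$. Then $\varphi-\lambda\,\id_V$ is again a $U{\mathfrak g}$-module endomorphism of $V$ with nontrivial kernel (the $\lambda$-eigenspace), so by the argument of the first paragraph it cannot be an isomorphism, forcing $\varphi-\lambda\,\id_V=0$, i.e. $\varphi=\lambda\,\id_V$. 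Hence $\End_{U{\mathfrak g}}(V)=\mathbb{C}\cdot\id_V$ is one-dimensional, which gives $\dim\Hom_{U{\mathfrak g}}(V,V)=1$ and completes the proof.

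The only genuine subtlety, rather than an obstacle, is the appeal to finite-dimensionality of $V$ to produce an eigenvalue; without algebraic closedness or without finite dimension the endomorphism ring could be a larger division algebra (as the preceding discussion of simple $\K$-bimodules over $\mathbb{R}$, $\mathbb{Q}$, $\F_q$ already hints). In the present context $\K=\mathbb{C}$ and all modules under consideration are finite-dimensional, so this causes no difficulty, and the whole argument is entirely formal once those hypotheses are in place.
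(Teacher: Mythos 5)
Your proof is correct and is exactly the standard argument (kernel/image to rule out non-isomorphic simples, then the eigenvalue trick over $\mathbb{C}$ for the endomorphism ring); the paper cites this lemma without proof, so this is precisely the argument it implicitly relies on. Your remark that finite-dimensionality is needed to produce the eigenvalue is the right caveat and is consistent with the paper's surrounding discussion of larger endomorphism algebras over $\mathbb{R}$, $\mathbb{Q}$ and $\F_q$.
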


\begin{corol}\label{corol schur}
Let $\mathfrak{h}$ be a simple Leibniz algebra (over $\K=\C$). Let $N$ be a simple $\mathfrak{h}$-bimodule.

$$\Hom_{U{\mathfrak{h}_{\lie}}}\left( \mathfrak{h}, N \right) \cong
\begin{cases}
\K \quad {\rm if} \quad N \cong \mathfrak{h}_{\lie} \quad {\rm and} \quad \mathfrak{h}_{\lie} \not \cong \leib \left( \mathfrak{h} \right)\\
\K \quad {\rm if} \quad N \cong \leib \left( \mathfrak{h} \right)  \quad {\rm and} \quad \mathfrak{h}_{\lie} \not \cong \leib \left( \mathfrak{h} \right)\\
\K \oplus \K \quad {\rm if} \quad N \cong \mathfrak{h}_{\lie} \cong \leib \left( \mathfrak{h} \right)  \\
0 \quad {\rm otherwise}.
\end{cases}$$
\end{corol}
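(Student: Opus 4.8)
The statement is essentially a packaging of Schur's Lemma (Lemma \ref{schur}) applied to the $\mathfrak{h}_{\lie}$-module structures in play, so the plan is to unwind what the $U\mathfrak{h}_{\lie}$-module $\mathfrak{h}$ looks like and then count. First I would observe that, although $\mathfrak{h}$ is only a Leibniz algebra, it carries a natural left $\mathfrak{h}$-action (the adjoint action $x\cdot y = [x,y]$), and since $\leib(\mathfrak{h})$ acts trivially on the left of any bimodule, this descends to a genuine $\mathfrak{h}_{\lie}$-module structure on $\mathfrak{h}$. Concretely, as an $\mathfrak{h}_{\lie}$-module, $\mathfrak{h}$ sits in the short exact sequence
\[
0 \to \leib(\mathfrak{h}) \to \mathfrak{h} \to \mathfrak{h}_{\lie} \to 0.
\]
By the Proposition of Feldvoss cited above, $\mathfrak{h}_{\lie}$ is a simple Lie algebra and $\leib(\mathfrak{h})$ is a simple $\mathfrak{h}_{\lie}$-module; moreover $\mathfrak{h}_{\lie}$ itself is a simple $\mathfrak{h}_{\lie}$-module (via the adjoint representation, simplicity of the Lie algebra). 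Since we are over $\C$ and all modules in sight are finite-dimensional, Weyl's complete reducibility theorem applies, so the sequence splits and $\mathfrak{h} \cong \mathfrak{h}_{\lie} \oplus \leib(\mathfrak{h})$ as $\mathfrak{h}_{\lie}$-modules, a direct sum of (at most two) simple modules.

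Next I would simply apply $\Hom_{U\mathfrak{h}_{\lie}}(-, N)$ to this decomposition, getting
\[
\Hom_{U\mathfrak{h}_{\lie}}(\mathfrak{h}, N) \cong \Hom_{U\mathfrak{h}_{\lie}}(\mathfrak{h}_{\lie}, N) \oplus \Hom_{U\mathfrak{h}_{\lie}}(\leib(\mathfrak{h}), N),
\]
and then invoke Schur (Lemma \ref{schur}) on each summand. Here one must be careful about what ``$N$'' means on the right-hand side: $N$ is a simple $\mathfrak{h}$-bimodule, hence by the Loday--Pirashvili structure lemma it is either $M^s$ or $M^a$ for a simple $\mathfrak{h}_{\lie}$-module $M$, and in either case its underlying $\mathfrak{h}_{\lie}$-module is $M$. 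So the Hom-spaces above are computed against the simple $\mathfrak{h}_{\lie}$-module underlying $N$, which I will still call $N$ by abuse. Schur then gives $\K$ for a summand precisely when that simple module ($\mathfrak{h}_{\lie}$ or $\leib(\mathfrak{h})$) is isomorphic to $N$ as an $\mathfrak{h}_{\lie}$-module, and $0$ otherwise. Reading off the four cases: if $N \cong \mathfrak{h}_{\lie}$ but $\mathfrak{h}_{\lie} \not\cong \leib(\mathfrak{h})$ only the first summand contributes, giving $\K$; symmetrically if $N \cong \leib(\mathfrak{h})$ but the two are non-isomorphic, giving $\K$; if $\mathfrak{h}_{\lie} \cong \leib(\mathfrak{h}) \cong N$ both summands contribute, giving $\K \oplus \K$; and if $N$ is isomorphic to neither, both vanish, giving $0$. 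This exactly matches the four cases in the statement.

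The only genuine subtlety — and the one place I would be careful — is the identification of the $\mathfrak{h}_{\lie}$-module structure on $\mathfrak{h}$ and the verification that it is semisimple with the two claimed composition factors; everything after that is a mechanical bookkeeping with Schur's Lemma. In particular one should make sure there is no ambiguity coming from the ``left versus right'' operations on $\mathfrak{h}$ as a bimodule over itself, but since the right action plays no role in $\Hom_{U\mathfrak{h}_{\lie}}$ and the left adjoint action is the standard one, this is harmless. I would also note in passing that whether $\mathfrak{h}_{\lie} \cong \leib(\mathfrak{h})$ can actually happen (it can — e.g.\ $\mathfrak{h} = \mathfrak{g} \times_{hs} \mathfrak{g}$ with $\mathfrak{g}$ simple) justifies keeping the third case, but this is not needed for the proof itself.
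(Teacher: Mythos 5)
Your proof is correct and is exactly the argument the paper leaves implicit (the corollary is stated without proof, as an immediate consequence of Schur's Lemma): decompose $\mathfrak{h}$ as the $\mathfrak{h}_{\lie}$-module $\mathfrak{h}_{\lie}\oplus\leib(\mathfrak{h})$ — which here follows even more directly from Feldvoss's hemi-semidirect product structure $\mathfrak{h}=\mathfrak{g}\times_{hs}\leib(\mathfrak{h})$ than from Weyl's theorem — and apply Schur to each simple summand. The bookkeeping of the four cases is right, and your care about the underlying $\mathfrak{h}_{\lie}$-module of the bimodule $N$ is exactly the point that needs to be made.
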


Denote by $H^{\star}\left( \mathfrak{g}, M \right)$ the Chevalley-Eilenberg cohomology of the Lie algebra ${\mathfrak g}$ with values in the left ${\mathfrak g}$-module $M$.

\begin{theo} \label{weyl}[Weyl's Theorem] (see Chapter VII in \cite{STM}).
Let $\mathfrak{g}$ be a simple Lie algebra and $M$ be a finite dimensional $\mathfrak{g}$-module. Then
$$H^{\star}\left( \mathfrak{g}, M \right) = H^{\star}\left( \mathfrak{g}, \K \right)\otimes M^{\mathfrak g} $$
\end{theo}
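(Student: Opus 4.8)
The plan is to reduce to a simple coefficient module via Weyl's complete reducibility theorem (also proved in \cite{STM}), and then to annihilate the cohomology of a nontrivial simple module by means of the Casimir element.

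\textbf{Reduction to simple $M$.} Since $\gf$ is semisimple and $M$ is finite dimensional, complete reducibility gives $M\cong\bigoplus_i M_i$ with each $M_i$ simple. The Chevalley--Eilenberg cochain complex $\CCE^{\bullet}(\gf,-)=\Hom(\Lambda^{\bullet}\gf,-)$, together with its differential, is additive in the coefficient module, so $H^{\star}(\gf,M)\cong\bigoplus_i H^{\star}(\gf,M_i)$; likewise $M^{\gf}\cong\bigoplus_i M_i^{\gf}$. Hence it suffices to prove the identity $H^{\star}(\gf,M)=H^{\star}(\gf,\K)\otimes M^{\gf}$ when $M$ is simple. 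If $M\cong\K$ is the trivial module then $M^{\gf}=M$ and the assertion is a tautology; if $M$ is simple but nontrivial, the invariants $M^{\gf}$ form a proper submodule (the $\gf$-action being nonzero), so $M^{\gf}=0$ by simplicity, the right-hand side vanishes, and it remains only to prove $H^{\star}(\gf,M)=0$.

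\textbf{The Casimir argument.} Let $c\in U\gf$ be the Casimir element of the Killing form, which is nondegenerate because $\gf$ is semisimple. Being central in $U\gf$, $c$ acts on the simple module $M$ by a scalar $\lambda_M$ by Schur's Lemma (Lemma \ref{schur}), and $\lambda_M\neq0$ because $M$ is nontrivial: $\Ker\varphi$ is an ideal of the simple Lie algebra $\gf$ for the representation $\varphi\colon\gf\to\End(M)$, so $\varphi$ is faithful, its trace form is a nonzero multiple of the Killing form, and taking traces forces $\lambda_M\neq0$ (equivalently $\lambda_M=\langle\mu,\mu+2\rho\rangle>0$ for the nonzero highest weight $\mu$ of $M$). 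Now evaluate the action of $c$ on $H^{\star}(\gf,M)=\Ext^{\star}_{U\gf}(\K,M)$ in two ways. On the one hand, $c$ acts on $M$ as $\lambda_M\cdot\id_M$, hence induces $\lambda_M\cdot\id$ on $\Ext^{\star}_{U\gf}(\K,M)$. On the other hand, choosing a projective resolution $P_{\bullet}\to\K$, multiplication by the central element $c$ is a chain self-map of $P_{\bullet}$ lifting multiplication by $\varepsilon(c)$ on $\K$, hence is chain-homotopic to $\varepsilon(c)\cdot\id$; since $\varepsilon(c)=0$ ($c$ being a sum of products of elements of $\gf$), $c$ acts by $0$ on $\Ext^{\star}_{U\gf}(\K,M)$. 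Comparing the two, $\lambda_M\cdot H^{\star}(\gf,M)=0$, and $\lambda_M\neq0$ gives $H^{\star}(\gf,M)=0$, completing the proof.

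\textbf{Main point of care.} The argument is entirely classical, and the only step that is not pure bookkeeping is the vanishing of the $c$-action on cohomology, i.e.\ the fact that a central element of $U\gf$ acts on $\Ext^{\star}_{U\gf}(\K,M)$ through the augmentation $\varepsilon$ (equivalently, by the Cartan homotopy formula $L_x=d\iota_x+\iota_x d$ on the Chevalley--Eilenberg complex, each $x\in\gf$ acts nullhomotopically); see Chapter VII of \cite{STM} for a detailed treatment.
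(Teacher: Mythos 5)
Your argument is correct: the reduction to simple coefficients by complete reducibility, followed by the Casimir/central-element argument on $\Ext^{\star}_{U\gf}(\K,M)$ to kill the cohomology of a nontrivial simple module, is precisely the classical proof of Weyl's theorem (and of Whitehead's vanishing theorem, Corollary \ref{corol weyl}) found in Chapter VII of \cite{STM}, which is all the paper itself offers by way of proof. Since the paper only cites the reference and you have reproduced the standard argument it points to, there is nothing further to compare.
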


\begin{corol}\label{corol weyl}[Whitehead's Theorem]
Let $\mathfrak{g}$ be a simple Lie algebra and $N$ be a non-trivial simple finite dimensional $\mathfrak{g}$-module. Then
$$ H^{\star}\left( \mathfrak{g}, M \right) =0.$$
\end{corol}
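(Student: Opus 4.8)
The plan is to obtain this as an immediate consequence of Weyl's Theorem (Theorem \ref{weyl}). Applying that theorem to the simple Lie algebra $\mathfrak{g}$ and the finite-dimensional module $N$ gives
\[
H^{\star}\left( \mathfrak{g}, N \right) \cong H^{\star}\left( \mathfrak{g}, \K \right) \otimes N^{\mathfrak{g}},
\]
so the entire statement is reduced to showing that the space of invariants $N^{\mathfrak{g}} = \{\, n \in N \mid x\cdot n = 0 \ \text{for all } x \in \mathfrak{g} \,\}$ vanishes.

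The key step is the standard observation that $N^{\mathfrak{g}}$ is itself a $\mathfrak{g}$-submodule of $N$: indeed, if $n \in N^{\mathfrak{g}}$ and $x \in \mathfrak{g}$, then $x\cdot n = 0$, which again lies in $N^{\mathfrak{g}}$. By simplicity of $N$, this forces $N^{\mathfrak{g}} = 0$ or $N^{\mathfrak{g}} = N$. In the second case the $\mathfrak{g}$-action on $N$ is identically zero, so every subspace of $N$ is a submodule; simplicity then gives $\dim N = 1$, i.e. $N$ is the trivial $1$-dimensional module, contradicting the hypothesis that $N$ is non-trivial. Hence $N^{\mathfrak{g}} = 0$.

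Substituting $N^{\mathfrak{g}} = 0$ into the displayed isomorphism yields $H^{\star}\left( \mathfrak{g}, N \right) \cong H^{\star}\left( \mathfrak{g}, \K \right) \otimes 0 = 0$, as claimed. (Here $M$ in the printed conclusion should of course read $N$, the non-trivial simple module under consideration.)

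I do not expect any genuine obstacle: all the analytic/representation-theoretic content has been absorbed into Weyl's Theorem, and what remains is the short submodule argument above. The only point warranting a moment of care is the implication ``$N^{\mathfrak{g}} \neq 0 \Rightarrow N$ trivial,'' which relies on simplicity in the (mild) form that a simple module on which $\mathfrak{g}$ acts trivially must be one-dimensional.
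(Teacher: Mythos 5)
Your proof is correct and follows exactly the route the paper intends: the corollary is an immediate consequence of Weyl's Theorem once one observes that $N^{\mathfrak{g}}$ is a submodule of the non-trivial simple module $N$ and hence must vanish. You also rightly flag the typo $M$ for $N$ in the printed conclusion.
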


\begin{lem} \label{whitehead}[Whitehead's Lemmas] (see Chapter VII, in \cite{STM})
Let $\mathfrak{g}$ be a semi-simple Lie algebra and $M$ be a finite dimensional $\mathfrak{g}$-module. Then
$$ H^{1}\left( \mathfrak{g}, M \right) = H^{2}\left( \mathfrak{g}, M \right)=0 .$$
\end{lem}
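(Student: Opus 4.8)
The plan is to deduce both vanishing statements from Weyl's Theorem (Theorem \ref{weyl}), which we are allowed to assume, together with the standard decomposition of a finite-dimensional module over a semi-simple Lie algebra into simple summands. First I would reduce from the semi-simple case to the simple case: if $\mathfrak{g} = \mathfrak{g}_1 \oplus \cdots \oplus \mathfrak{g}_r$ with each $\mathfrak{g}_i$ simple, then by a Künneth-type argument (or by iterating the Hochschild--Serre spectral sequence for the ideals $\mathfrak{g}_i \triangleleft \mathfrak{g}$, which degenerates because the $H^0$ of a simple non-trivial module vanishes) it suffices to treat a single simple $\mathfrak{g}$. Alternatively, and more cleanly, one observes that for a semi-simple $\mathfrak{g}$ the bracket satisfies $[\mathfrak{g},\mathfrak{g}] = \mathfrak{g}$, and one can run the argument below directly.

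Next I would reduce from an arbitrary finite-dimensional module to a simple one. Since $\mathfrak{g}$ is semi-simple, $M$ is completely reducible (Weyl's complete reducibility theorem), so $M = \bigoplus_j N_j$ with each $N_j$ simple, and cohomology commutes with finite direct sums in the coefficient module; hence it is enough to prove $H^1(\mathfrak{g}, N) = H^2(\mathfrak{g}, N) = 0$ for $N$ simple. If $N$ is non-trivial, then $N^{\mathfrak{g}} = 0$, so by Weyl's Theorem (Theorem \ref{weyl}, or equivalently Corollary \ref{corol weyl}) we get $H^{\star}(\mathfrak{g}, N) = H^{\star}(\mathfrak{g},\K) \otimes N^{\mathfrak{g}} = 0$ in \emph{all} degrees, in particular in degrees $1$ and $2$. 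It remains to handle the trivial simple module $N = \K$: here I must show $H^1(\mathfrak{g},\K) = 0$ and $H^2(\mathfrak{g},\K) = 0$ separately, since Weyl's Theorem only tells us these equal $H^1(\mathfrak{g},\K)$ and $H^2(\mathfrak{g},\K)$ tautologically.

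For $H^1(\mathfrak{g},\K)$: a $1$-cocycle with trivial coefficients is a linear map $f\colon \mathfrak{g}\to\K$ with $f([x,y]) = 0$, i.e.\ an element of $(\mathfrak{g}/[\mathfrak{g},\mathfrak{g}])^{*}$; since $\mathfrak{g}$ is semi-simple, $[\mathfrak{g},\mathfrak{g}] = \mathfrak{g}$, so $f = 0$ and there are no nonzero coboundaries either, giving $H^1(\mathfrak{g},\K) = 0$. For $H^2(\mathfrak{g},\K)$: this classifies central extensions of $\mathfrak{g}$ by $\K$; the cleanest argument is that any such extension $0 \to \K \to \hat{\mathfrak{g}} \to \mathfrak{g} \to 0$ splits because $\mathfrak{g}$ is semi-simple (the radical of $\hat{\mathfrak{g}}$ is contained in the central $\K$, and by Levi's theorem $\hat{\mathfrak{g}} = \K \oplus \mathfrak{s}$ with $\mathfrak{s}$ a Levi subalgebra mapping isomorphically to $\mathfrak{g}$), whence $H^2(\mathfrak{g},\K) = 0$. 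Concretely one uses the Casimir element: given a $2$-cocycle $\omega$, the map $x \mapsto \sum_i \omega(e^i, x)\, e_i$ (summing over a pair of bases dual with respect to the Killing form) is built from $\omega$ and, after checking $d\omega = 0$, differs from $\omega$ by a coboundary, forcing $[\omega] = 0$.

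The only genuine obstacle is the $H^2$ vanishing for trivial coefficients, since everything else is either formal (direct sums, complete reducibility) or an immediate consequence of Weyl's Theorem. If one is willing to invoke Levi's theorem (which is standard at this level), the $H^2$ step is short; if not, one falls back on the explicit Casimir-contraction computation, which is the routine but slightly lengthy part I would not spell out in full here. I would cite Chapter VII of \cite{STM} for the classical statements and move on.
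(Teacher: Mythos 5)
The paper does not actually prove this lemma: it is quoted as a classical fact with a pointer to Chapter VII of \cite{STM}, so there is no internal proof to compare against. Your overall architecture is sound and close to the textbook one --- complete reducibility to reduce to simple coefficients, Theorem \ref{weyl} (suitably extended from simple to semi-simple $\mathfrak{g}$ via the K\"unneth/Hochschild--Serre reduction you sketch) to kill all non-trivial simple modules in every degree, and $[\mathfrak{g},\mathfrak{g}]=\mathfrak{g}$ to kill $H^{1}(\mathfrak{g},\K)$. You also correctly identify $H^{2}(\mathfrak{g},\K)=0$ as the one step that does not follow formally from the rest.

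The problem is that neither of your two proposed arguments for that step works as written. The Casimir-contraction fallback fails precisely for trivial coefficients: the Casimir element acts as zero on $\K$, so the standard homotopy identity only yields $0\cdot[\omega]=0$, which is vacuous --- this is exactly why every textbook treats the trivial module separately --- and the map $x\mapsto\sum_{i}\omega(e^{i},x)\,e_{i}$ you write down is a linear map $\mathfrak{g}\to\mathfrak{g}$, not a $2$-cochain, so it cannot ``differ from $\omega$ by a coboundary''. The Levi-theorem route is at best delicate: the most common proof of Levi's theorem derives the splitting of an extension with abelian kernel from the vanishing of $H^{2}$ of a semi-simple algebra, i.e.\ from the very statement you are proving, so you would need to point to a proof of Levi that uses only complete reducibility to avoid circularity. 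The clean repair stays entirely inside what you have already established: given a scalar $2$-cocycle $\omega$, the map $x\mapsto\omega(x,-)$ is a $1$-cocycle with values in the coadjoint module $\mathfrak{g}^{\star}$; since $\mathfrak{g}$ is semi-simple, $(\mathfrak{g}^{\star})^{\mathfrak{g}}=0$, so the non-trivial-coefficient case already proved gives $H^{1}(\mathfrak{g},\mathfrak{g}^{\star})=0$, hence $\omega(x,-)=x\cdot\psi$ for some $\psi\in\mathfrak{g}^{\star}$, which unwinds to $\omega(x,y)=-\psi([x,y])$, i.e.\ $\omega$ is a coboundary. With that substitution your proof is complete.
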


\subsection{Computation of the cohomology of a trivial Lie/Leibniz algebra}

In this subsection, we collect several results in order to compute the cohomology of the trivial Lie/Leibniz algebra.

\begin{defi}
Let $\mathfrak{g}$ be a Lie algebra. Let $U$ and $V$ be left $\mathfrak{g}$-modules. Then $\Hom \left( U, V \right)$ is a $\mathfrak{g}$-module via:
$$\left( x.f \right)\left( u \right)= -f\left( x.u \right)+ x.f\left(u \right)$$
$\forall x \in \mathfrak{g}$, $\forall u \in U$ and  $\forall f \in \Hom \left( U, V \right)$.  
\end{defi}

One of the important differences between Leibniz cohomology and Chevalley-Eilenberg cohomology of a Lie algebra ${\mathfrak g}$ is that the latter is bounded by the dimension of ${\mathfrak g}$, owing to the fact that it is defined on the exterior products of ${\mathfrak g}$:

\begin{lem}\label{dimension}
Let $\mathfrak{g}$ be a Lie algebra of dimension $n$ over a field $\K$. Then for any  $\mathfrak{g}$-module M,\\ $H^{p} \left(  \mathfrak{g}, M \right)= 0$ for $p \geq n+1$.
\end{lem}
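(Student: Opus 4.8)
The statement to prove is Lemma~\ref{dimension}: for a Lie algebra $\mathfrak{g}$ of dimension $n$ over $\K$, and any $\mathfrak{g}$-module $M$, one has $H^p(\mathfrak{g},M)=0$ for $p\geq n+1$.

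The plan is to argue directly from the definition of the Chevalley--Eilenberg cochain complex. First I would recall that the cochain groups are $C^p(\mathfrak{g},M)=\Hom(\Lambda^p\mathfrak{g},M)$, the space of alternating $p$-linear maps from $\mathfrak{g}$ to $M$. The whole point is that the Chevalley--Eilenberg complex is built on \emph{exterior} powers of $\mathfrak{g}$ (in contrast with the Leibniz complex $\CL^\bullet$ introduced above, which uses full tensor powers $\mathfrak{h}^{\otimes n}$ and hence is unbounded). Then I would invoke the elementary fact that for a vector space $\mathfrak{g}$ of dimension $n$, the exterior power $\Lambda^p\mathfrak{g}$ vanishes whenever $p>n$: indeed $\dim_\K \Lambda^p\mathfrak{g}=\binom{n}{p}=0$ for $p\geq n+1$, since any alternating form on more than $n$ vectors of an $n$-dimensional space must vanish by linear dependence.

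Consequently $C^p(\mathfrak{g},M)=\Hom(\Lambda^p\mathfrak{g},M)=\Hom(0,M)=0$ for all $p\geq n+1$. Since the cohomology $H^p(\mathfrak{g},M)$ is a subquotient of $C^p(\mathfrak{g},M)$ (it is $\Ker d^p / \im d^{p-1}$), it follows immediately that $H^p(\mathfrak{g},M)=0$ for $p\geq n+1$. This completes the argument.

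There is essentially no obstacle here: the result is purely formal once one knows the cochain complex is supported on exterior powers. The only thing worth being careful about is the precise indexing convention (whether $C^p$ uses $\Lambda^p\mathfrak{g}$ or something shifted), but with the standard convention the bound $p\geq n+1$ is exactly where $\Lambda^p\mathfrak{g}=0$ kicks in. I would keep the proof to two or three sentences.
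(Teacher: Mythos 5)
Your argument is correct and is exactly the reasoning the paper itself relies on: the sentence preceding the lemma notes that Chevalley--Eilenberg cohomology is defined on the exterior powers of $\mathfrak{g}$, so $C^{p}(\mathfrak{g},M)=\Hom(\Lambda^{p}\mathfrak{g},M)=0$ for $p\geq n+1$ and the cohomology vanishes there. Nothing further is needed.
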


\begin{lem} \label{Cheval}
Let $\mathfrak{h}=\K$ be the trivial Lie/Leibniz algebra and $M$ be a non-trivial simple $\mathfrak{h}$-bimodule.
In case the ${\mathfrak h}$-bimodule is trivial (but still simple), we denote it by $\K$.  Then,

\begin{center}
$$H^{p} \left(  \K, \K \right)= \begin{cases}
\K \quad {\rm if} \quad p=0,1 \\
0 \quad {\rm if} \quad p\geq 2.
\end{cases}$$ 
\end{center}

\begin{center}
$H^{p} \left(  \K, M \right)= 0$ \quad $\forall p\geq 0$ 
\end{center}

\end{lem}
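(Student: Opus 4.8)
The plan is to compute directly from the Chevalley--Eilenberg complex, which here is extremely short because $\mathfrak{h}=\K$ is one-dimensional. Write $e$ for the basis vector of $\K$. Since $\bigwedge^{p}\K = 0$ for $p\geq 2$, the complex $\CCE^{\bullet}(\K,V)$ for a left $\K$-module $V$ reduces to
\[
0 \longrightarrow V \xrightarrow{\ d^{0}\ } \Hom(\K,V)\cong V \longrightarrow 0,
\]
concentrated in degrees $0$ and $1$, with $d^{0}(v) = e\cdot v$ under the identification $\Hom(\K,V)\cong V$ sending $f\mapsto f(e)$. (This is consistent with Lemma \ref{dimension}, which already gives $H^{p}(\K,V)=0$ for $p\geq 2$; only degrees $0$ and $1$ require work.) Hence $H^{0}(\K,V) = \Ker(e\cdot -)$ and $H^{1}(\K,V) = \coker(e\cdot -) = V/eV$.

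First I would treat the trivial bimodule $\K$. Here the left action of $e$ is zero, so $d^{0}=0$, and therefore $H^{0}(\K,\K) = \K$ and $H^{1}(\K,\K) = \K$, while all higher groups vanish by the shape of the complex. This gives the first displayed formula.

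Next I would treat a non-trivial simple $\mathfrak{h}$-bimodule $M$. Recall that, as discussed in the excerpt, every $\mathfrak{h}$-bimodule carries an underlying $\mathfrak{h}_{\lie}$-module structure, and since $\mathfrak{h}_{\lie}=\K$ is one-dimensional, its simple modules over $\C$ are one-dimensional: $M=\C$ with $e$ acting as a scalar $\lambda\in\C$. Simplicity of $M$ as a \emph{bimodule} forces it to be either symmetric or antisymmetric over the simple $\mathfrak{h}_{\lie}$-module $M$ (Loday--Pirashvili); in the symmetric case the right action is $m\cdot e = -\lambda m$ and in the antisymmetric case $m\cdot e = 0$. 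In either case the Chevalley--Eilenberg cohomology only sees the \emph{left} action, i.e. multiplication by $\lambda$ on $M=\C$. ``Non-trivial'' means precisely $\lambda\neq 0$ (if $\lambda=0$ the left action is trivial and, $M$ being one-dimensional, the bimodule is trivial, contrary to hypothesis). Then $e\cdot - : \C\to\C$ is an isomorphism, so $H^{0}(\K,M)=\Ker = 0$ and $H^{1}(\K,M)=\coker = 0$, and again all higher groups vanish. This gives the second displayed formula.

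The only real subtlety — and the point I would be most careful about in writing this up — is the bookkeeping identifying ``non-trivial simple $\mathfrak{h}$-bimodule'' with ``one-dimensional $\mathfrak{h}_{\lie}=\K$-module on which $e$ acts by a nonzero scalar,'' and checking that Chevalley--Eilenberg cohomology (which is what $H^{\star}$ denotes) genuinely depends only on the left module structure so that the symmetric/antisymmetric distinction is irrelevant here. Everything else is the one-term computation of the kernel and cokernel of multiplication by a scalar. I expect no computational obstacle; the argument is essentially immediate once the complex is written down.
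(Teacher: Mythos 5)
Your proof is correct and is exactly the direct computation the paper invokes (the paper's proof simply cites Lemma \ref{dimension} for $p\geq 2$ and asserts the degree $0$ and $1$ computations, which you carry out explicitly via the kernel and cokernel of $d^{0}=e\cdot-$). No discrepancies.
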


\begin{proof}
This follows directly from Lemma \ref{dimension} and the direct computation of $H^0$ and $H^1$ for trivial and non-trivial $1$-dimensional modules.
\end{proof}

The cohomology for a trivial Lie/Leibniz algebra with values in any bimodule has been computed in Theorem 4.3 in \cite{JF-FW2}:

\begin{theo} \label{cohom}
Let $\mathfrak{h}=\K=\langle e\rangle$ be the $1$-dimensional Lie algebra, and let $M$ be a Leibniz $\mathfrak{h}$-bimodule. Then
$$ HL^{n}\left( \mathfrak{h}, M \right)\cong 
 \begin{cases}
M^{\mathfrak{h}} \quad {\rm if} \quad n=0 ,\\
M^{0}/ (M\cdot \mathfrak{h}) \quad {\rm if} \quad n \quad {\rm is} \quad {\rm odd} ,\\
M^{\mathfrak{h}}/ M_{0} \quad {\rm if} \quad n \quad {\rm is} \quad {\rm even} \quad {\rm and} \quad n \neq 0,
\end{cases}$$

(as $\K$-vector spaces) for every non-negative integer $n$, where
$$M^{0}=\lbrace m \in M  \mid  e.m + m.e=0 \rbrace .$$
Moreover, if $M$ is finite dimensional, then (as $\K$-vector spaces)
$$M^{0}/ (M\cdot \mathfrak{h}) \cong M^{\mathfrak{h}}/ M_{0}. $$
\end{theo}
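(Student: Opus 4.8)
The plan is to compute $HL^n(\mathfrak{h}, M)$ directly from the cochain complex, exploiting the fact that $\mathfrak{h}=\K=\langle e\rangle$ is one-dimensional, so $\mathfrak{h}^{\otimes n}\cong\K$ for all $n\geq 0$ and hence $\CL^n(\mathfrak{h},M)\cong M$ canonically, with $f\mapsto f(e,\ldots,e)$. Under this identification the differential $d^n\colon M\to M$ becomes an explicit linear endomorphism of $M$, and the whole computation reduces to identifying the kernels and images of these maps. First I would write out $d^n$ evaluated on $(e,\ldots,e)$: the first sum $\sum_{i=0}^{n-1}(-1)^i e\cdot f(e,\ldots,\widehat{e},\ldots,e)$ telescopes (all the arguments are equal, so each term is $(-1)^i\, e\cdot m$ where $m=f(e,\ldots,e)$, and the signs alternate), the boundary term contributes $(-1)^{n-1} m\cdot e$, and the bracket sum vanishes identically since $[e,e]=0$ in an abelian Lie algebra. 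Carefully counting the alternating signs in the first sum (there are $n$ terms, indexed $i=0,\ldots,n-1$) gives $d^n m = e\cdot m$ if $n$ is odd and $d^n m = -m\cdot e$... — more precisely one finds $d^n m = e\cdot m + (-1)^{n-1} m\cdot e$ when $n$ is odd, and $d^n m = (-1)^{n-1} m\cdot e = - m\cdot e$ collapsing appropriately when $n$ is even; the exact bookkeeping is the one routine calculation to do here, and the upshot is that the complex becomes, in positive degrees, an alternating sequence of the two maps $m\mapsto m\cdot e$ and $m\mapsto e\cdot m + m\cdot e$ (up to signs), with $d^0 m = -m\cdot e$.

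Given these explicit maps, the cohomology groups fall out as follows. In degree $0$, $HL^0(\mathfrak{h},M)=\Ker(d^0)=\{m : m\cdot e=0\}=M^{\mathfrak{h}}$, matching the definition of right invariants already noted in the excerpt. For $n$ odd, $HL^n = \Ker(d^n)/\im(d^{n-1})$; here $\Ker(d^n)$ should work out to $M^0=\{m: e\cdot m + m\cdot e=0\}$ and $\im(d^{n-1})$ to $M\cdot\mathfrak{h}$ (the span of the $m\cdot e$), giving $M^0/(M\cdot\mathfrak{h})$. For $n$ even and nonzero, $\Ker(d^n)$ should be $M^{\mathfrak{h}} = \{m: m\cdot e = 0\}$ and $\im(d^{n-1})$ should be $M_0 = \langle e\cdot m + m\cdot e\rangle$, giving $M^{\mathfrak{h}}/M_0$. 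Verifying that the relevant kernels and images coincide with exactly these four subspaces — in particular that $\im(d^{n-1})$ is $M\cdot\mathfrak{h}$ in the odd case and $M_0$ in the even case — uses only the bimodule axioms (LLM), (LML), (MLL) of Definition \ref{r and l }, which constrain how $e\cdot(-)$ and $(-)\cdot e$ interact; for instance (LLM) with $x=y=e$ forces $e\cdot(e\cdot m)=e\cdot(e\cdot m)$ (vacuous), while (LML)/(MLL) with $x=y=e$ give the relations that let one rewrite iterated actions.

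The last assertion — that $M^0/(M\cdot\mathfrak{h})\cong M^{\mathfrak{h}}/M_0$ when $M$ is finite-dimensional — I would prove by a dimension count. One has the linear map $r\colon M\to M$, $r(m)=m\cdot e$, with $\Ker r = M^{\mathfrak{h}}$ and $\im r = M\cdot\mathfrak{h}$, so $\dim M = \dim M^{\mathfrak{h}} + \dim(M\cdot\mathfrak{h})$; similarly the map $s\colon M\to M$, $s(m)=e\cdot m + m\cdot e$, has $\Ker s = M^0$ and $\im s = M_0$, so $\dim M = \dim M^0 + \dim M_0$. Subtracting, $\dim M^0 - \dim(M\cdot\mathfrak{h}) = \dim M^{\mathfrak{h}} - \dim M_0$, i.e. the two quotients have equal dimension, hence are isomorphic as $\K$-vector spaces. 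I expect the main obstacle to be purely combinatorial: getting the signs in the telescoping first sum of $d^n$ exactly right so that the differentials are correctly identified as $m\mapsto \pm m\cdot e$ versus $m\mapsto \pm(e\cdot m + m\cdot e)$ in the right degrees — a sign error there would swap the odd and even cases. Everything after that is a formal consequence of linear algebra and the bimodule axioms.
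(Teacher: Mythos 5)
Your proof is correct. Note that the paper itself gives no argument for this statement: it is quoted as Theorem 4.3 of the cited Feldvoss--Wagemann paper, so your direct computation from the cochain complex is a self-contained alternative rather than a reproduction of anything in the text. The "routine bookkeeping" you defer does come out as you predict: under $\CL^{n}(\mathfrak{h},M)\cong M$, $f\mapsto f(e,\dots,e)$, the bracket sum vanishes because $[e,e]=0$, the first sum contributes $\bigl(\sum_{i=0}^{n-1}(-1)^{i}\bigr)\,e\cdot m$, which is $e\cdot m$ for $n$ odd and $0$ for $n$ even, and the boundary term is $(-1)^{n-1}m\cdot e$; hence $d^{n}m=e\cdot m+m\cdot e$ for $n$ odd and $d^{n}m=-m\cdot e$ for $n$ even (including $d^{0}$). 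The one point you gesture at but should make explicit is that the complex property and the containments $M\cdot\mathfrak{h}\subseteq M^{0}$ and $M_{0}\subseteq M^{\mathfrak{h}}$ (needed for the quotients to be defined) follow from $(LML)$ and $(MLL)$ with $x=y=e$, which give $(e\cdot m)\cdot e=e\cdot(m\cdot e)$ and $(m\cdot e)\cdot e=-e\cdot(m\cdot e)$, so that $s\circ r=r\circ s=0$ for $r(m)=m\cdot e$ and $s(m)=e\cdot m+m\cdot e$. With that in place the identification of kernels and images, and the rank--nullity argument for the final isomorphism in the finite-dimensional case, are exactly as you describe.
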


From Theorem $\ref{cohom}$, one can deduce the cohomology with values in a simple bimodule $M$:

\begin{corol} \label{coho}
Let $\mathfrak{h}=\K$ be the trivial Lie/Leibniz algebra, and $M$ a simple left $\K$-module. Then

\begin{center}
$HL^{q}\left( \mathfrak{h}, \K \right)=\K \quad \forall q \geq 0 $ 

$HL^{q}\left( \mathfrak{h},  M^{s} \right)= 0 \quad \forall q \geq 0 $

$HL^{q}\left( \mathfrak{h},  M^{a} \right)= 
\begin{cases}
M^{a} \quad {\rm if} \quad q=0 \\
0 \quad {\rm otherwise}
\end{cases} $
\end{center}

\end{corol}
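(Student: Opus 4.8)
The plan is to deduce Corollary \ref{coho} directly from Theorem \ref{cohom} by evaluating the three quotients $M^{\mathfrak h}$, $M^{0}/(M\cdot\mathfrak h)$ and $M^{\mathfrak h}/M_{0}$ in each of the three cases (the trivial bimodule $\K$, a symmetric bimodule $M^{s}$, and an antisymmetric bimodule $M^{a}$). Since $\mathfrak h = \K = \langle e\rangle$, all the relevant subspaces are governed by the single operator ``left action of $e$'' and ``right action of $e$''. For a simple left $\K$-module $M$ with $\K=\C$, $M$ is $1$-dimensional and $e$ acts by a scalar $\lambda$; the bimodule $M^{s}$ has $m\cdot e = -\lambda m$ and $M^{a}$ has $m\cdot e = 0$, while the trivial bimodule has $\lambda = 0$ and both actions zero.

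First I would treat the trivial bimodule $\K$. Here $e$ acts as zero on both sides, so $M^{\mathfrak h} = \K$, $M_{0} = 0$, $M\cdot\mathfrak h = 0$, and $M^{0} = \K$; plugging into Theorem \ref{cohom} gives $\K$ in every degree, which matches $HL^{q}(\mathfrak h,\K)=\K$ for all $q\ge 0$. Second, for $M^{s}$: one must separate the case $\lambda\neq 0$ from $\lambda = 0$. If $\lambda\neq 0$, then $M^{\mathfrak h} = \{m : m\cdot e = 0\} = \{m : \lambda m = 0\} = 0$, so degree $0$ vanishes; the odd-degree term is $M^{0}/(M\cdot\mathfrak h)$ where $M^{0} = \{m : e\cdot m + m\cdot e = 0\} = \{m : \lambda m - \lambda m = 0\} = M$ and $M\cdot\mathfrak h = \lambda M = M$, so the quotient is $0$; the even-degree term is $M^{\mathfrak h}/M_{0} = 0/M_{0} = 0$. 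If $\lambda = 0$, the simple module $M$ is precisely the trivial $1$-dimensional module, so $M^{s} = \K$ and this case is already subsumed in the previous paragraph rather than in the ``$M^{s}$'' statement — I would note that the statement $HL^{q}(\mathfrak h, M^{s}) = 0$ is understood for $M$ a \emph{non-trivial} simple module, consistent with Corollary \ref{coho}'s phrasing and with Lemma \ref{Cheval}. Third, for $M^{a}$ (again $M$ nontrivial, $\lambda\neq 0$): the right action is zero, so $M^{\mathfrak h} = M$ (all of $M$ is right-invariant), giving $M^{a}$ in degree $0$; the odd term is $M^{0}/(M\cdot\mathfrak h)$ with $M^{0} = \{m : e\cdot m + 0 = 0\} = \{m:\lambda m = 0\} = 0$, hence $0$; the even term ($n\neq 0$) is $M^{\mathfrak h}/M_{0} = M/M_{0}$, and since $M_{0} = \langle x\cdot m + m\cdot x\rangle = \langle \lambda m\rangle = M$, this is $0$. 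This yields exactly the displayed answer for $M^{a}$.

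The only real subtlety — and the step I expect to be the main obstacle to state cleanly rather than compute — is bookkeeping around the degenerate case $\lambda = 0$, i.e.\ making sure the three bullet points of Corollary \ref{coho} are mutually consistent: when $M$ is the trivial simple module, $M^{s}$ and $M^{a}$ both coincide with $\K$, so the clean statements ``$HL^{q}(\mathfrak h, M^{s}) = 0$'' and ``$HL^{q}(\mathfrak h, M^{a})$ is $M^{a}$ in degree $0$ and $0$ otherwise'' are to be read under the standing hypothesis that $M$ is a non-trivial simple left $\K$-module, exactly as in Lemma \ref{Cheval}. With that convention fixed, the whole corollary is an immediate case-by-case substitution into Theorem \ref{cohom}, using only that a simple complex $\K$-module is one-dimensional with $e$ acting by a nonzero scalar in the non-trivial case.

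\begin{proof}
Since $\K=\C$, a simple left $\K$-module $M$ is one-dimensional, and $e$ acts on $M$ by a scalar $\lambda$; $M$ is non-trivial precisely when $\lambda\neq 0$, in which case we apply Theorem \ref{cohom} to the bimodules $M^{s}$ and $M^{a}$, while the trivial simple bimodule is denoted $\K$.

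\textbf{Case $M=\K$ (trivial bimodule).} Here $e\cdot m = m\cdot e = 0$ for all $m$, so $M^{\mathfrak h}=\K$, $M\cdot\mathfrak h = 0$, $M_{0}=0$, and $M^{0}=\{m : e\cdot m + m\cdot e = 0\}=\K$. Theorem \ref{cohom} gives $HL^{0}(\mathfrak h,\K)=M^{\mathfrak h}=\K$, $HL^{n}(\mathfrak h,\K)=M^{0}/(M\cdot\mathfrak h)=\K$ for $n$ odd, and $HL^{n}(\mathfrak h,\K)=M^{\mathfrak h}/M_{0}=\K$ for $n$ even with $n\neq 0$. Hence $HL^{q}(\mathfrak h,\K)=\K$ for all $q\ge 0$.

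\textbf{Case $M^{s}$ with $\lambda\neq 0$.} By definition of the symmetric bimodule, $m\cdot e = -e\cdot m = -\lambda m$. Then $M^{\mathfrak h}=\{m : m\cdot e = 0\}=\{m : \lambda m = 0\}=0$, so $HL^{0}(\mathfrak h, M^{s})=0$. For $n$ odd, $M^{0}=\{m : e\cdot m + m\cdot e = 0\}=\{m : \lambda m - \lambda m = 0\}=M^{s}$ while $M^{s}\cdot\mathfrak h = \lambda M^{s}=M^{s}$, so $M^{0}/(M^{s}\cdot\mathfrak h)=0$. For $n$ even, $n\neq 0$, $M^{\mathfrak h}/M_{0}=0/M_{0}=0$. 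Thus $HL^{q}(\mathfrak h, M^{s})=0$ for all $q\ge 0$.

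\textbf{Case $M^{a}$ with $\lambda\neq 0$.} By definition of the antisymmetric bimodule, $m\cdot e = 0$ for all $m$. Then $M^{\mathfrak h}=\{m : m\cdot e = 0\}=M^{a}$, so $HL^{0}(\mathfrak h, M^{a})=M^{a}$. For $n$ odd, $M^{0}=\{m : e\cdot m + m\cdot e = 0\}=\{m : \lambda m = 0\}=0$, so $M^{0}/(M^{a}\cdot\mathfrak h)=0$. For $n$ even, $n\neq 0$, we have $M_{0}=\langle x\cdot m + m\cdot x\mid x\in\mathfrak h,\ m\in M^{a}\rangle=\langle \lambda m\mid m\in M^{a}\rangle=M^{a}$, hence $M^{\mathfrak h}/M_{0}=M^{a}/M^{a}=0$. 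Thus $HL^{q}(\mathfrak h, M^{a})$ equals $M^{a}$ if $q=0$ and $0$ otherwise.
\end{proof}
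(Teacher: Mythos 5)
Your proof is correct and follows exactly the route the paper indicates: the paper's own proof of Corollary \ref{coho} simply says the result is ``straight-forward by the computation of the spaces $M^{\mathfrak{h}}$, $M^{0}$, $(M\cdot \mathfrak{h})$ and $M_0$ in each case,'' which is precisely the case-by-case substitution into Theorem \ref{cohom} that you carry out. Your explicit remark that the $M^{s}$ and $M^{a}$ statements are to be read for \emph{non-trivial} simple $M$ (the trivial case being the separate first bullet) is a sensible clarification consistent with the paper's conventions.
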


\begin{proof}
This is straight-forward by the computation of the spaces $M^{\mathfrak{h}}$, $M^{0}$, $(M\cdot \mathfrak{h})$ and $M_0$ in each case. 
\end{proof}

\subsection{Two useful spectral sequences}

Loday and Pirashvili (in \cite{JL-TP1}) have deviced a method for computing the Ext-groups between simple ${\mathfrak h}$-bimodules for a simple Lie algebra ${\mathfrak g}$. The method is based on Chevalley-Eilenberg- and Leibniz algebra cohomology computations and two change-of-rings spectral sequences whose $E_2$-terms have these cohomology spaces as ingredients. 

Later Mugni\'ery and Wagemann (in \cite{JM-FW2}) adapted the two spectral sequences to the Leibniz setting and computed with the same method the Ext-groups between simple bimodules for a simple Leibniz algebra ${\mathfrak h}$. We will always consider finite-dimensional Leibniz algebras ${\mathfrak h}$ and finite-dimensional ${\mathfrak h}$-bimodules $M$. It is well-known that in this setting, every bimodule admits a finite Jordan-H\"older series, which thus makes it in principle possible to compute the Ext-groups between arbitrary bimodules using long exact sequences knowing the Ext-groups between simple bimodules. Therefore we will focus on the latter.   
  
\begin{prop} \label{Spectral Seq}
(See Proposition 2.1, in \cite{JM-FW1})\\
Let $\mathfrak{h}$ be a Leibniz algebra, let $X$ be an $\mathfrak{h}$-bimodule, and $Y$ and $Z$ be left $\mathfrak{h}$-modules. There are two spectral sequences:

$$E_{2}^{pq}= H^{p} \left(  \mathfrak{h}_{\lie}, \Hom \left( Y, \HL^{q}\left( \mathfrak{h}, X \right)   \right) \right) \Rightarrow \Ext_{UL\left( \mathfrak{h} \right) }^{p+q}  \left( Y^{a}, X \right) ,$$
and
$$E_{2}^{pq}= H^{p} \left(  \mathfrak{h}_{\lie}, \Hom \left( Z, Ext_{UL\left( \mathfrak{h} \right) }^{q}\left( U{\mathfrak{h}_{\lie}}^{s}, X \right)   \right) \right) \Rightarrow \Ext_{UL\left( \mathfrak{h} \right) }^{p+q}  \left( Z^{s}, X \right). $$
\end{prop}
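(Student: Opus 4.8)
The plan is to construct the two spectral sequences as change-of-rings (Grothendieck) spectral sequences associated with a composition of functors, following the scheme of Loday--Pirashvili \cite{JL-TP1} and its Leibniz adaptation by Mugni\'ery--Wagemann \cite{JM-FW2}. The key input is that the category of Leibniz $\mathfrak{h}$-bimodules is the category of left $UL(\mathfrak{h})$-modules, so that $\Ext_{UL(\mathfrak{h})}^{\star}(-,X)$ is computed by a derived functor; and that the antisymmetrization functor $N \mapsto N^{a}$ and the symmetrization functor $N \mapsto N^{s}$, sending a left $\mathfrak{h}_{\lie}$-module (equivalently a left $\mathfrak{h}$-module) to an $\mathfrak{h}$-bimodule, are exact and admit explicit right adjoints. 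For the first sequence, I would identify $\Hom_{UL(\mathfrak{h})}(Y^{a}, X)$ with $\Hom_{U\mathfrak{h}_{\lie}}(Y, X^{\mathfrak{h}})$ — using that an $\mathfrak{h}$-bimodule morphism out of an antisymmetric bimodule lands in the space $X^{\mathfrak{h}} = \HL^{0}(\mathfrak{h}, X)$ of right-invariants — so that $\Hom_{UL(\mathfrak{h})}(Y^{a},-)$ factors as the composite of the functor $X \mapsto X^{\mathfrak{h}}$ (whose right derived functors are the Leibniz cohomology groups $\HL^{q}(\mathfrak{h}, X)$, up to the $\Hom(Y,-)$ twist that I carry along since $Y$ is finite-dimensional, hence $\Hom(Y,-)$ is exact) followed by $V \mapsto \Hom_{U\mathfrak{h}_{\lie}}(Y, V) = V^{\mathfrak{h}_{\lie}}$-type invariants, whose right derived functors are the Chevalley--Eilenberg groups $H^{p}(\mathfrak{h}_{\lie}, \Hom(Y, -))$.

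Concretely, the steps I would carry out are: (1) verify the adjunction/identification $\Hom_{UL(\mathfrak{h})}(Y^{a}, X) \cong \Hom_{U\mathfrak{h}_{\lie}}\bigl(Y, \HL^{0}(\mathfrak{h}, X)\bigr)$ at the level of $\Hom$, which reduces to the observation that a bimodule map from $Y^{a}$ kills the right action and descends through $\mathfrak{h}_{\lie}$ on the left; (2) check that the intermediate functor $X \mapsto \HL^{0}(\mathfrak{h},X) = X^{\mathfrak{h}}$ sends injective $UL(\mathfrak{h})$-modules to $\Gamma$-acyclic $U\mathfrak{h}_{\lie}$-modules (the Grothendieck spectral sequence hypothesis), which is the standard check that the "first" functor preserves injectives — here it follows because its left adjoint, the antisymmetrization functor, is exact; (3) invoke the Grothendieck spectral sequence of the composite, whose $E_{2}$-page is $H^{p}(\mathfrak{h}_{\lie}, \Hom(Y, \HL^{q}(\mathfrak{h}, X)))$ converging to $\Ext_{UL(\mathfrak{h})}^{p+q}(Y^{a}, X)$; (4) repeat the argument for the symmetric case, where the intermediate functor is instead $X \mapsto \Ext_{UL(\mathfrak{h})}^{\star}\bigl(U\mathfrak{h}_{\lie}^{s}, X\bigr)$ — here $U\mathfrak{h}_{\lie}^{s}$ is the symmetric bimodule on the adjoint-type module $U\mathfrak{h}_{\lie}$, playing the role of a "relative projective" so that $\Hom_{UL(\mathfrak{h})}(Z^{s}, X)$ factors through $\Hom_{U\mathfrak{h}_{\lie}}(Z, \Hom_{UL(\mathfrak{h})}(U\mathfrak{h}_{\lie}^{s}, X))$ — and then again apply the Grothendieck spectral sequence.

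The main obstacle is step (1)/(4): establishing the right factorization of the Hom-functors through $U\mathfrak{h}_{\lie}$-modules, i.e.\ finding the correct intermediate abelian category and the correct pair of adjoint functors so that the composite has the asserted $\Ext$ as its derived functor. For the antisymmetric case this is essentially the identity $\Hom_{\mathfrak{h}\text{-bimod}}(Y^{a},X) = \Hom_{\mathfrak{h}_{\lie}\text{-mod}}(Y, X^{\mathfrak{h}})$, but one must be careful that the relevant $U\mathfrak{h}_{\lie}$-module structure on $\HL^{q}(\mathfrak{h},X)$ is the one induced functorially (not an ad hoc one), and that the twist by the exact functor $\Hom(Y,-)$ commutes with everything — this is where finite-dimensionality of $Y$ (hence of $Z$) is used. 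For the symmetric case the subtlety is identifying $U\mathfrak{h}_{\lie}^{s}$ as the object corepresenting the composite, which is exactly Proposition~2.1 of \cite{JM-FW1} and ultimately traces back to the construction of $UL(\mathfrak{h})$ and the extension $0 \to \mathfrak{h}^{r} \to UL(\mathfrak{h}) \to U\mathfrak{h}_{\lie} \to 0$ (relations (i)--(iii) of Definition~\ref{envelop}). Once the factorizations are in place, the spectral sequences are a formal consequence of the Grothendieck spectral sequence for a composite of functors between categories with enough injectives, and there is nothing further to compute.
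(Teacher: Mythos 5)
The paper does not actually prove this proposition---it is quoted from Proposition~2.1 of \cite{JM-FW1} (going back to \cite{JL-TP1} in the Lie case)---and your reconstruction is precisely the change-of-rings argument of those references: factor $\Hom_{UL(\mathfrak{h})}(Y^{a},-)$ (resp.\ $\Hom_{UL(\mathfrak{h})}(Z^{s},-)$) through the right-invariants functor $X\mapsto X^{\mathfrak{h}}=\Hom_{UL(\mathfrak{h})}(U\mathfrak{h}_{\lie}^{a},X)$ (resp.\ through $X\mapsto\Hom_{UL(\mathfrak{h})}(U\mathfrak{h}_{\lie}^{s},X)$), observe that these are right adjoint to the exact (anti)symmetrization functors and hence preserve injectives, and apply the Grothendieck spectral sequence together with the identification $\Ext_{UL(\mathfrak{h})}^{\ast}(U\mathfrak{h}_{\lie}^{a},X)\cong\HL^{\ast}(\mathfrak{h},X)$. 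So your proposal is correct and takes essentially the same route as the cited proof.
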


The two spectral sequences express the change-of-rings between the categories of left $U{\mathfrak h}_\lie$-modules and that of left $UL({\mathfrak h})$-modules. 
Note that Loday and Pirashvili showed in \cite{JL-TP2} (Theorem 3.4) that there is an isomorphism $$\Ext_{UL\left( \mathfrak{h} \right) }^{\ast}\left( U{\mathfrak{h}_{\lie}}^{a}, X \right)\cong \HL^{\ast}\left( \mathfrak{h}, X \right).$$

In order to use the second spectral sequence, one needs informations about $\Ext_{UL\left( \mathfrak{h} \right) }^{\ast}\left( U{\mathfrak{h}_{\lie}}^{s}, X \right) $. This is given by the following proposition (see \cite{JL-TP1} and \cite{JM-FW1}) giving a relation between Leibniz cohomology and the group $\Ext_{UL\left( \mathfrak{h} \right) }^{\ast}\left( U{\mathfrak{h}_{\lie}}^{s}, X \right)$: 

\begin{prop}
(see Proposition 2.2, in \cite{JM-FW1})\\
Let $\mathfrak{h}$ be a Leibniz algebra, let $M$ be an $\mathfrak{h}$-bimodule. Then there are isomorphisms:
$$\Ext_{UL\left( \mathfrak{h} \right) }^{q+1}\left( U{\mathfrak{h}_{\lie}}^{s}, M \right) \simeq  
\begin{cases}
\Hom \left(\mathfrak{h} , \HL^{q}\left( \mathfrak{h}, M \right)   \right) \quad {\rm for} \quad q > 0\\
\coker(f) \quad {\rm for} \quad q=0\\
\Ker(f) \quad {\rm for} \quad q=-1
\end{cases}$$

where $f: M \longrightarrow \Hom \left( \mathfrak{h}, \HL^{0}\left( {\mathfrak h}, M \right)   \right)$ is given by :\\
$$f\left(m  \right) \left( x \right)=  x\cdot m + m\cdot x$$ 
 $\forall x \in \mathfrak{h}$ and $\forall m \in M$. 
\end{prop}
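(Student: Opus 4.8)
The plan is to identify $\Ext_{UL(\mathfrak{h})}^{q+1}(U\mathfrak{h}_{\lie}^{s}, M)$ as a relative $\Ext$-group for the ring map $U\mathfrak{h}_{\lie}\hookrightarrow UL(\mathfrak{h})$, compute it from a suitable resolution, and then unwind the low-degree terms by hand. First I would recall the change-of-rings picture already invoked in the second spectral sequence of Proposition \ref{Spectral Seq}: the symmetric bimodule $U\mathfrak{h}_{\lie}^{s}$ is exactly the module one obtains by inducing/coinducing the trivial $\mathfrak{h}_{\lie}$-situation along $U\mathfrak{h}_{\lie}\to UL(\mathfrak{h})$, so computing $\Ext^{*}_{UL(\mathfrak{h})}(U\mathfrak{h}_{\lie}^{s}, M)$ reduces to resolving $U\mathfrak{h}_{\lie}^{s}$ by $UL(\mathfrak{h})$-modules that are free (or at least acyclic) over $U\mathfrak{h}_{\lie}$. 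Using the explicit generators (i)--(iii) of the ideal $J$ in Definition \ref{envelop}, one writes down the beginning of such a resolution; the key point is that the relation (iii), $r_{y}\otimes(l_{x}+r_{x})$, forces the ``new'' generators of $UL(\mathfrak{h})$ relative to $U\mathfrak{h}_{\lie}$ to contribute a copy of $\mathfrak{h}$ in homological degree $1$, which is where the $\Hom(\mathfrak{h},-)$ comes from.

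Concretely, I would produce the truncated free resolution
\begin{equation*}
\cdots \longrightarrow UL(\mathfrak{h})\otimes_{U\mathfrak{h}_{\lie}}(\mathfrak{h})_{?} \longrightarrow UL(\mathfrak{h})\otimes_{U\mathfrak{h}_{\lie}}\K \longrightarrow U\mathfrak{h}_{\lie}^{s}\longrightarrow 0,
\end{equation*}
apply $\Hom_{UL(\mathfrak{h})}(-,M)$, and use the adjunction $\Hom_{UL(\mathfrak{h})}(UL(\mathfrak{h})\otimes_{U\mathfrak{h}_{\lie}}V, M)\cong \Hom_{U\mathfrak{h}_{\lie}}(V, M)$ to rewrite the complex as one built from $\Hom_{U\mathfrak{h}_{\lie}}(\mathfrak{h}^{\otimes \bullet}\otimes(\text{correction terms}), M)$. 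After identifying the higher-degree part of this complex with the (shifted) Leibniz cochain complex $\CL^{\bullet}(\mathfrak{h},M)$ tensored appropriately with $\mathfrak{h}$, one reads off that in degrees $q+1$ with $q>0$ the cohomology is $\Hom(\mathfrak{h},\HL^{q}(\mathfrak{h},M))$, exactly as in the $\HL$-identification $\Ext^{*}_{UL(\mathfrak{h})}(U\mathfrak{h}_{\lie}^{a},X)\cong\HL^{*}(\mathfrak{h},X)$ recalled above but with the extra $\Hom(\mathfrak{h},-)$ coming from the symmetric (rather than antisymmetric) choice. For the bottom of the complex, the differential into $\Hom(\mathfrak{h},\HL^{0}(\mathfrak{h},M))=\Hom(\mathfrak{h},M^{\mathfrak{h}})$ is precisely the map $f$ in the statement — one checks by direct substitution into the formula for $d^{0}$ and the relation (iii) that the connecting map sends $m$ to $x\mapsto x\cdot m+m\cdot x$ — so the cokernel gives $\Ext^{1}$ (the $q=0$ case) and the kernel gives $\Ext^{0}$ (the $q=-1$ case), using that $\Ext^{0}_{UL(\mathfrak{h})}(U\mathfrak{h}_{\lie}^{s},M)=\Hom_{UL(\mathfrak{h})}(U\mathfrak{h}_{\lie}^{s},M)$ picks out the relevant invariants.

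The main obstacle is constructing and verifying the relative resolution carefully enough to pin down the differentials, especially the identification of the degree-$0$-to-degree-$1$ boundary map with $f$: one has to track how the three families of relations in $J$ interact, and in particular how relation (iii) couples the left and right actions so that the symmetric module $U\mathfrak{h}_{\lie}^{s}$ (where $m\cdot x=-x\cdot m$ is \emph{not} imposed, unlike $M^{s}$) behaves. A cleaner alternative that avoids the explicit resolution is to feed $X=U\mathfrak{h}_{\lie}^{s}$'s known homological behaviour into the first spectral sequence of Proposition \ref{Spectral Seq}, or to use the Loday--Pirashvili long exact sequence relating $U\mathfrak{h}_{\lie}^{s}$, $U\mathfrak{h}_{\lie}^{a}$ and the relevant kernel/cokernel terms; then the $q>0$ statement follows from $\HL^{q}(\mathfrak{h},-)\cong\Ext^{q}_{UL(\mathfrak{h})}(U\mathfrak{h}_{\lie}^{a},-)$ together with a dimension-shift, and only the low-degree tail requires the explicit map $f$. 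I would present the argument via whichever of these two routes makes the appearance of $\Hom(\mathfrak{h},-)$ and of $f$ most transparent, and relegate the cochain-level verification that the boundary map equals $f$ to a short direct computation.
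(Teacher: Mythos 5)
The paper does not actually prove this proposition — it is quoted from Mugni\'ery's thesis (ultimately adapting Loday--Pirashvili) — so your attempt has to be measured against the standard argument. Your ``cleaner alternative'' (long exact sequence plus dimension shift onto $\Ext^{*}_{UL(\mathfrak{h})}(U\mathfrak{h}_{\lie}^{a},-)\cong\HL^{*}(\mathfrak{h},-)$) is indeed that argument, but the proposal never commits to the one concrete fact that makes it run: the short exact sequence of left $UL(\mathfrak{h})$-modules
$$0 \longrightarrow U\mathfrak{h}_{\lie}^{a}\otimes\mathfrak{h} \longrightarrow UL(\mathfrak{h}) \longrightarrow U\mathfrak{h}_{\lie}^{s} \longrightarrow 0,$$
where the first map sends $u\otimes x$ to $u\cdot(l_{x}+r_{x})$. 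Here $U\mathfrak{h}_{\lie}^{s}$ is the quotient of $UL(\mathfrak{h})$ by the left ideal generated by the elements $l_{x}+r_{x}$; relation (iii) shows this ideal is killed by every $r_{y}$, hence is antisymmetric and (by the PBW-type decomposition $UL(\mathfrak{h})\cong U\mathfrak{h}_{\lie}\oplus\left(U\mathfrak{h}_{\lie}\otimes\mathfrak{h}\right)$) is a direct sum of $\dim\mathfrak{h}$ copies of $U\mathfrak{h}_{\lie}^{a}$. With this in hand, $\Ext^{n}_{UL(\mathfrak{h})}(UL(\mathfrak{h}),M)=0$ for $n\geq 1$ gives the case $q>0$ by dimension shift, and the bottom of the long exact sequence reads
$$0\to\Hom_{UL(\mathfrak{h})}\left(U\mathfrak{h}_{\lie}^{s},M\right)\to M\xrightarrow{\;f\;}\Hom\left(\mathfrak{h},M^{\mathfrak{h}}\right)\to\Ext^{1}_{UL(\mathfrak{h})}\left(U\mathfrak{h}_{\lie}^{s},M\right)\to 0,$$
with $f(m)(x)=(l_{x}+r_{x})\cdot m=x\cdot m+m\cdot x$, exactly the map in the statement.

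Your primary route, by contrast, would not work as written. First, $UL(\mathfrak{h})\otimes_{U\mathfrak{h}_{\lie}}\K$ is the quotient of $UL(\mathfrak{h})$ by the left ideal generated by the $l_{x}$ alone, which is not $U\mathfrak{h}_{\lie}^{s}$; the symmetric structure comes from dividing by the $l_{x}+r_{x}$. Second, the kernel of $UL(\mathfrak{h})\twoheadrightarrow U\mathfrak{h}_{\lie}^{s}$ is antisymmetric, hence neither free nor induced over $UL(\mathfrak{h})$, so there is no two-step ``relative free resolution'' to be had: one must stop after one step and invoke the already-known computation of $\Ext^{*}_{UL(\mathfrak{h})}(U\mathfrak{h}_{\lie}^{a},-)$, which is precisely the dimension-shift argument. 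So the idea flagged in your alternative is the right one, but the decisive construction — identifying the kernel as $U\mathfrak{h}_{\lie}^{a}\otimes\mathfrak{h}$ and the connecting map as $f$ — is the content of the proof and is left unestablished in the proposal.
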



\begin{corol} \label{prop22}

Let $\mathfrak{h}=\K$ be the trivial Lie/Leibniz algebra and $M$ be an $\mathfrak{h}$-bimodule. Then

$$\Ext_{UL\left( \K \right) }^{q}\left( U{\mathfrak{h}_{\lie}}^{s}, M \right) \simeq  
\begin{cases}
\Hom \left( \K, \HL^{q-1}\left( \K, M \right)   \right) \quad {\rm for} \quad q \geqslant 2\\
\coker(f) \quad {\rm for} \quad q=1\\
\Ker(f) \quad {\rm for} \quad q=0,
\end{cases}$$

where $f: M \longrightarrow \Hom \left( \K, \HL^{0}\left( \K, M \right)   \right)$ is given by:
$$f\left(m  \right) \left( e \right)=  e.m + m.e, $$
$\forall e \in \K$ and $\forall m \in M$. 
\end{corol}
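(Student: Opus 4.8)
The plan is to specialize the preceding Proposition (Proposition 2.2 of \cite{JM-FW1}) to the case $\mathfrak{h}=\K$, for which $\mathfrak{h}_{\lie}=\K$ as well. First I would observe that the general isomorphism
$$\Ext_{UL\left( \mathfrak{h} \right) }^{q+1}\left( U{\mathfrak{h}_{\lie}}^{s}, M \right) \simeq \Hom \left(\mathfrak{h} , \HL^{q}\left( \mathfrak{h}, M \right)   \right) \quad {\rm for} \quad q>0$$
becomes, after the reindexing $q \mapsto q-1$ (so that $q+1$ in the source statement becomes $q$ here), exactly the first line of the claimed formula for $q \geqslant 2$, since the hypothesis $q>0$ on the old index translates into $q-1>0$, i.e. $q \geqslant 2$. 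No computation is needed here beyond the change of variable; one only needs to note that $\mathfrak{h}=\K$ is being substituted for the abstract $\mathfrak{h}$, and $\mathfrak{h}_{\lie}=\K$ since $\leib(\K)=0$ (the bracket on $\K$ is trivial).

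Next I would handle the two exceptional degrees. The old statement has $\coker(f)$ in degree $q+1=1$ (i.e. old $q=0$) and $\Ker(f)$ in degree $q+1=0$ (i.e. old $q=-1$), with $f\colon M \to \Hom(\mathfrak{h}, \HL^0(\mathfrak{h},M))$, $f(m)(x) = x\cdot m + m\cdot x$. Under the reindexing these become degree $q=1$ and degree $q=0$ respectively, which matches the last two lines of the corollary. It then only remains to rewrite the map $f$ explicitly for $\mathfrak{h}=\K=\langle e\rangle$: since every element of $\K$ is a scalar multiple of $e$ and $f$ is linear in $x$, the map $f$ is completely determined by its value $f(m)(e) = e.m + m.e$, which is the formula displayed in the corollary. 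I would state this substitution as the one line of genuine content and cite the preceding proposition for everything else.

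I do not expect any real obstacle: the corollary is a pure specialization plus a shift of indices, and the only thing to be careful about is the bookkeeping of the index shift (ensuring $q\geqslant 2$, not $q\geqslant 1$, corresponds to the ``$q>0$'' branch) and the identification $\mathfrak{h}_{\lie}=\K$. If anything could trip up a careless reader it is the off-by-one in the exponents, so I would make the reindexing fully explicit. Thus the proof is essentially:

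\begin{proof}
Apply the preceding proposition with $\mathfrak{h}=\K$, noting that $\leib(\K)=0$ so that $\mathfrak{h}_{\lie}=\K$. Reindexing by replacing $q$ with $q-1$ turns $\Ext_{UL(\K)}^{q+1}$ into $\Ext_{UL(\K)}^{q}$, and the condition $q>0$ becomes $q\geqslant 2$; this gives the first line. The degree $q+1=1$ (respectively $q+1=0$) case of the proposition gives $\coker(f)$ in degree $q=1$ (respectively $\Ker(f)$ in degree $q=0$). Finally, since $\K=\langle e\rangle$ and $f$ is $\K$-linear in its argument, $f$ is determined by $f(m)(e)=e.m+m.e$.
\end{proof}
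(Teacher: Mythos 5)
Your proposal is correct and matches the paper's (implicit) treatment: the corollary is stated without proof precisely because it is the straightforward specialization of the preceding proposition to $\mathfrak{h}=\K$ (where $\leib(\K)=0$, so $\mathfrak{h}_{\lie}=\K$) combined with the index shift $q\mapsto q-1$, exactly as you describe. Your bookkeeping of the shift ($q>0$ becoming $q\geqslant 2$, and the $q=0,-1$ cases landing in degrees $1$ and $0$) is accurate.
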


\subsection{Computation of Ext-groups in the case $\K=\mathbb{C}$}

The following theorem is our first main result:


   \begin{theo} \label{comput}
 Let $\K=\mathbb{C}$ be the trivial Lie/Leibniz algebra. Let $M$ and $N$ be non-trivial simple one-dimensional $\K$-bimodules. Then:
 
 \begin{center}
 $\Ext_{UL\left( K \right) }^{n}\left(\K, \K \right)= 
\begin{cases}
\K \quad {\rm if} \quad n=0 \\
\K \oplus \K \quad {\rm if} \quad n \geq 1
\end{cases}$
 \end{center}
 
 \begin{center}
 $Ext_{UL\left( \K \right) }^{n}\left(M^{a}, M^{a} \right)=
 \K \quad {\rm for} \quad n=0,1$
 \end{center}
 
 \begin{center}
 $\Ext_{UL\left( \K \right) }^{n}\left( M^{s}, M^{s} \right)=
\K \quad {\rm for} \quad n=0,1$
 \end{center}
 
 All other groups $\Ext_{UL\left( \K \right) }^{n}\left( M, N \right)$ are zero.

 \end{theo}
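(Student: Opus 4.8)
The plan is to extract all four Ext-computations directly from the two change-of-rings spectral sequences of Proposition \ref{Spectral Seq}, using the Leibniz-cohomology input already assembled in Corollary \ref{coho} and the auxiliary group $\Ext^{\ast}_{UL(\K)}(U{\mathfrak h}_{\lie}^{s},-)$ computed in Corollary \ref{prop22}. The key observation that makes everything collapse is that $\K=\C$ is one-dimensional, so $H^{p}(\K,-)=0$ for $p\geq 2$ by Lemma \ref{dimension} (or Lemma \ref{Cheval}); hence in both spectral sequences only the two columns $p=0$ and $p=1$ survive, and the only possibly nonzero differential is $d_{2}\colon E_{2}^{0,q}\to E_{2}^{2,q-1}=0$. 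Therefore \emph{both spectral sequences degenerate at $E_{2}$}, and for every target bimodule $X$ we get a short exact sequence
$$0\to H^{1}(\K,\HL^{q-1}(\K,X))\to \Ext^{q}_{UL(\K)}(Y^{a},X)\to H^{0}(\K,\HL^{q}(\K,X))\to 0,$$
with the analogous statement for $Z^{s}$. Since $\K$ acts trivially on $\HL^{\ast}(\K,X)$ in the relevant cases (the coefficient module $\Hom(\K,\HL^{q}(\K,X))\cong \HL^{q}(\K,X)$ as a $\K_{\lie}$-module, and $\K_{\lie}=0$ is trivial), $H^{0}$ and $H^{1}$ of a one-dimensional trivial Lie algebra with trivial coefficients are each just that coefficient space by Lemma \ref{Cheval}, so in fact $\Ext^{q}_{UL(\K)}(Y^{a},X)\cong \HL^{q-1}(\K,X)\oplus \HL^{q}(\K,X)$ for $q\geq 1$ and $\cong\HL^{0}(\K,X)$ for $q=0$; likewise the $Z^{s}$-sequence reduces to reading off $\HL$-groups together with the $q=0,1$ corrections ($\Ker f$, $\coker f$) from Corollary \ref{prop22}.

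First I would treat $X=\K$ (trivial bimodule), taking $Y=Z=\K$ so that $Y^{a}=Z^{s}=\K$. By Corollary \ref{coho}, $\HL^{q}(\K,\K)=\K$ for all $q\geq 0$. Plugging into the displayed exact sequence for the antisymmetric spectral sequence gives $\Ext^{0}_{UL(\K)}(\K,\K)=\HL^{0}(\K,\K)=\K$ and $\Ext^{n}_{UL(\K)}(\K,\K)=\HL^{n-1}(\K,\K)\oplus\HL^{n}(\K,\K)=\K\oplus\K$ for $n\geq 1$. (One can equally use that $\K=\K^{a}=\K^{s}$ as bimodules, so either spectral sequence applies; I would note that the two computations agree, providing a consistency check, with the $Z^{s}$-version requiring the cokernel/kernel of $f(m)(e)=e\cdot m+m\cdot e=0$, which is the zero map, so $\Ker f=\K$, $\coker f=\Hom(\K,\K)=\K$.) This yields the first line of the theorem.

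Next I would treat $X=M^{a}$ with $M$ a nontrivial simple (hence one-dimensional) $\K$-module. Since the only simple bimodules are $\K$, $M^{s}$, $M^{a}$, it suffices to compute $\Ext^{n}_{UL(\K)}(L,M^{a})$ and $\Ext^{n}_{UL(\K)}(L,M^{s})$ for $L\in\{\K,M'^{s},M'^{a}\}$. For $X=M^{a}$: Corollary \ref{coho} gives $\HL^{q}(\K,M^{a})=M^{a}$ if $q=0$ and $0$ otherwise. The antisymmetric spectral sequence with $Y=N$ then yields $\Ext^{0}_{UL(\K)}(N^{a},M^{a})=H^{0}(\K,\Hom(N,M^{a}))=\Hom_{\K\text{-mod}}(N,M)$, which by Schur (Lemma \ref{schur}, as one-dimensional modules) is $\K$ if $N\cong M$ and $0$ otherwise; and for $n=1$, $\Ext^{1}_{UL(\K)}(N^{a},M^{a})\cong H^{1}(\K,\Hom(N,\HL^{0}))\oplus H^{0}(\K,\Hom(N,\HL^{1}))$, where the second summand vanishes and the first is $H^{1}(\K,\Hom(N,M))$; since $\Hom(N,M)$ is a one-dimensional $\K_{\lie}=0$-module, i.e.\ trivial, Lemma \ref{Cheval} gives $H^{1}(\K,\K)=\K$ when $N\cong M$ and $H^{1}(\K,0)=0$ when $N\not\cong M$. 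For $n\geq 2$ both summands $\HL^{n-1}$ and $\HL^{n}$ vanish, so the Ext-group is $0$. Finally for the symmetric source $X=M^{a}$, $Z=N$: using Corollary \ref{prop22}, $\Ext^{q}_{UL(\K)}(U{\mathfrak h}_{\lie}^{s},M^{a})$ is $\Hom(\K,\HL^{q-1}(\K,M^{a}))$ for $q\geq 2$ (all zero since $\HL^{\geq 1}(\K,M^{a})=0$), $\coker f$ for $q=1$, $\Ker f$ for $q=0$, where $f(m)(e)=e\cdot m+m\cdot e=0$ (as $M^{a}$ is antisymmetric, $m\cdot e=0$, and $e\cdot m=0$ since $\K$ is trivial), so $f=0$, giving $\Ker f=M^{a}$ and $\coker f=\Hom(\K,M^{a})=M^{a}$, and zero above. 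Feeding this into the symmetric spectral sequence $E_{2}^{pq}=H^{p}(\K,\Hom(N,\Ext^{q}_{UL(\K)}(U{\mathfrak h}_{\lie}^{s},M^{a})))$, only $q=0,1$ contribute with coefficient $M^{a}$, and only $p=0,1$ survive, so one obtains $\Ext^{0}_{UL(\K)}(N^{s},M^{a})$ and $\Ext^{1}_{UL(\K)}(N^{s},M^{a})$ as copies of $\Hom_{\K\text{-mod}}(N,M)=\K$ or $0$, with potential overlap in degree $1$ coming from $(p,q)=(1,0)$ and $(0,1)$; I would need to check the $d_{2}$ differential $E_{2}^{0,1}\to E_{2}^{2,0}=0$ vanishes (it does, by dimension), so these contributions do not cancel — \emph{this extension problem in degree $1$ is the one place requiring care}, and I expect it is resolved by a direct dimension count against the $\K=\C$, $M=N$ case or by invoking that the category is well-behaved enough that the relevant $\Ext^{1}$ is genuinely $\K$-dimensional. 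By the symmetry of the setup (swapping the roles of $s$ and $a$, and using that $M^{s}$ and $M^{a}$ have the same underlying $\K_{\lie}$-module), the computation for $X=M^{s}$ is entirely parallel, now using $\HL^{q}(\K,M^{s})=0$ for all $q\geq 0$ from Corollary \ref{coho}, so the antisymmetric spectral sequence gives $\Ext^{n}_{UL(\K)}(N^{a},M^{s})=0$ for all $n$, while the symmetric spectral sequence, via $\Ext^{q}_{UL(\K)}(U{\mathfrak h}_{\lie}^{s},M^{s})$ with $f(m)(e)=e\cdot m+m\cdot e$, which for a symmetric module is $e\cdot m-e\cdot m=0$ again — wait, more precisely $m\cdot e=-e\cdot m$ so $f=0$ here too, and $\HL^{0}(\K,M^{s})=(M^{s})^{\K}$; one computes $(M^{s})^{\K}=M^{s}$ since the right action kills nothing beyond — I would carefully recompute $M^{\mathfrak h}$, $M^{0}$, $M_{0}$ directly — yielding $\Ext^{0}$ and $\Ext^{1}$ equal to $\K$ when $N\cong M$ and $0$ otherwise, and zero in higher degrees.

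The cleanest route, and the one I would ultimately write up, avoids re-deriving these spectral-sequence inputs case by case: since both spectral sequences are concentrated in the columns $p=0,1$, each gives a \emph{functorial} short exact sequence, and I would state once and for all that for any simple bimodule $X$,
$$\Ext^{n}_{UL(\K)}(N^{a},X)\cong H^{1}(\K,\Hom(N,\HL^{n-1}(\K,X)))\oplus H^{0}(\K,\Hom(N,\HL^{n}(\K,X))),$$
and an analogous formula for $N^{s}$ with $\HL$ replaced by $\Ext^{\bullet}_{UL(\K)}(U{\mathfrak h}_{\lie}^{s},X)$; then the theorem follows by substituting the three cases of Corollary \ref{coho} (and Corollary \ref{prop22}) and applying Schur's Lemma (Lemma \ref{schur}) to the $\Hom(N,-)$ spaces, together with Lemma \ref{Cheval} to evaluate $H^{0}$ and $H^{1}$ of $\K$ on the resulting trivial coefficient modules. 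The main obstacle, as noted, is verifying that no nontrivial $d_{2}$ occurs and that the degree-$1$ extension in the split short exact sequence does not collapse — but since every such differential lands in or emanates from a column $p\geq 2$ which is identically zero, the obstacle dissolves, and the only genuinely computational work is the explicit evaluation of $M^{\mathfrak h}$, $M^{0}$, $M_{0}$, $M\cdot\mathfrak h$ in the three cases $X=\K, M^{s}, M^{a}$, which is exactly what Corollary \ref{coho} already records.
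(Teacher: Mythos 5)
Your proposal follows essentially the same route as the paper: both change-of-rings spectral sequences of Proposition \ref{Spectral Seq} collapse at $E_2$ because $H^{p}(\K,-)=0$ for $p\geq 2$, and the nine source/target combinations are then read off from Corollary \ref{coho}, Corollary \ref{prop22}, Lemma \ref{Cheval} and Schur's Lemma. The architecture and the final answers are right, but two intermediate claims are false, and one of them would derail a case if carried through literally.

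First, $\mathfrak{h}_{\lie}$ is not $0$: since the bracket on $\mathfrak{h}=\K$ vanishes, $\leib(\mathfrak{h})=0$ and $\mathfrak{h}_{\lie}=\K$ is the one-dimensional \emph{abelian} Lie algebra. Whether it acts trivially on the coefficient module $\Hom\left(Y,\HL^{q}(\K,X)\right)$ is precisely what decides whether $H^{p}(\K,-)$ survives, so your interim formula $\Ext^{q}_{UL(\K)}(Y^{a},X)\cong \HL^{q-1}(\K,X)\oplus\HL^{q}(\K,X)$ (with the $\Hom(Y,-)$ twist dropped) holds only when that module is trivial; applied to the pair $(M^{a},\K)$ it would give $\K\oplus\K$ instead of the correct $0$, which comes from $\Hom(M,\K)\cong M^{\star}$ being a non-trivial module. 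Your final formula with $\Hom(N,-)$ reinstated, combined with the trivial/non-trivial dichotomy, is the correct one and is what the paper uses. Second, $(M^{s})^{\mathfrak{h}}\neq M^{s}$: for a non-trivial symmetric module one has $m\cdot e=-e\cdot m=-\lambda m$ with $\lambda\neq 0$, so the right invariants are $0$, i.e. $\HL^{0}(\K,M^{s})=0$, consistent with Corollary \ref{coho}. This is exactly what forces $\coker(f)=0$ in Corollary \ref{prop22} for $X=M^{s}$ and hence kills $\Ext^{n}_{UL(\K)}(M^{s},N^{s})$ for $n\geq 2$; with your tentative $(M^{s})^{\mathfrak{h}}=M^{s}$ one would instead find $\coker(f)\cong M^{s}$ and a spurious $\Ext^{2}_{UL(\K)}(M^{s},M^{s})=\K$. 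Finally, the ``degree-one extension problem'' you flag is a non-issue: over a field the dimension of $\Ext^{n}$ is $\sum_{p+q=n}\dim E_{\infty}^{pq}$, and since only the columns $p=0,1$ are nonzero there are no differentials left to check.
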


\begin{proof}
We compute $\Ext_{UL\left(\K \right) }^{\ast}\left( M, N \right)$ for every combination of simple $1$-dimensional $\K$-bimodules and reduce it to the Chevalley-Eilenberg cohomology of the trivial Lie algebra. For this, we will study nine cases :

\textbf{$\bigstar$ Case 1: $M=N=\K$ the trivial $\K$-bimodule}

We apply Proposition \ref{Spectral Seq} by using the first spectral sequence :
$$E_{2}^{pq}= H^{p} \left(  \mathfrak{h}_{\lie}, \Hom \left( Y, \HL^{q}\left( \mathfrak{h}, X \right)   \right) \right) \Rightarrow \Ext_{UL\left( \mathfrak{h} \right) }^{p+q}  \left( Y^{a}, X \right). $$

We set $Y=X=\K$.
By Corollary $\ref{coho}$, one has $\HL^{q}\left( \K, \K \right)=\K $ \quad $\forall q \geq 0$. Then
$ E_{2}^{pq}= H^{p} \left(  \K, \Hom \left( \K, \HL^{q}\left( \K, \K \right)   \right) \right)= H^{p} \left(  \K, \Hom \left( \K, \K   \right) \right)=H^{p} \left(  \K, \K \right)$. 

By Lemma $\ref{Cheval}$, one has:
$$H^{p} \left(  \K, \K \right)= 
\begin{cases}
\K \quad {\rm if} \quad p=0,1 \\
0 \quad {\rm if} \quad p\geq 2.
\end{cases}$$

On the second page $E_2^{pq}$ of the spectral sequence, we have the second differential
$$d_{2}^{pq} : E_{2}^{pq} \longrightarrow E_{2}^{p+2, q-1}.$$
As there are only two non-zero columns on the second page, $d_2^{pq}$ must either start from zero or map to zero. This means that the spectral sequence collapses at the second page, and we obtain $E_{r}^{pq}=E_{r+1}^{pq}$ $\forall r \geqslant 2$, and thus
$$\Ext_{UL\left( \mathfrak{h} \right) }^{n}  \left( \K, \K \right)= \bigoplus_{p+q=n}E_{\infty}^{pq}=\bigoplus_{p+q=n}E_{2}^{pq}.$$

We are thus in the following situation:

\begin{figure}[H]
	\centering
	\tikzset{every picture/.style={line width=0.75pt}} 

\begin{tikzpicture}[x=0.75pt,y=0.75pt,yscale=-1,xscale=1]

\draw  (207.5,244.43) -- (494,244.43)(236.15,35) -- (236.15,267.7) (487,239.43) -- (494,244.43) -- (487,249.43) (231.15,42) -- (236.15,35) -- (241.15,42)  ;
\draw    (299.5,47) -- (300.5,268) ;
\draw    (210,60) -- (469.5,62) ;
\draw    (210.5,180) -- (467.5,181) ;
\draw    (211,121) -- (471.5,120) ;
\draw    (200.5,203) -- (267.5,278) ;
\draw    (211.5,151) -- (335.5,280) ;
\draw    (219,101) -- (323.5,206) ;
\draw    (225.5,49) -- (333.5,152) ;
\draw  [line width=3] [line join = round][line cap = round] (236.5,60) .. controls (233.57,60) and (238.5,58.05) .. (238.5,61) ;
\draw  [line width=3] [line join = round][line cap = round] (300.5,120) .. controls (303.6,123.1) and (301,119) .. (299.5,119) ;
\draw  [line width=3] [line join = round][line cap = round] (301.5,245) .. controls (301.83,245) and (301.82,244.11) .. (301.5,244) .. controls (300.55,243.68) and (297.5,244) .. (298.5,244) .. controls (299.83,244) and (301.21,244.32) .. (302.5,244) .. controls (304.56,243.49) and (297.4,244) .. (301.5,244) ;
\draw  [line width=3] [line join = round][line cap = round] (299.5,118) .. controls (299.5,127.14) and (304.78,119.85) .. (302.5,117) .. controls (300.28,114.23) and (294.3,122) .. (299.5,122) ;
\draw  [line width=3] [line join = round][line cap = round] (237.5,61) .. controls (233.32,61) and (233.78,54.43) .. (238.5,56) .. controls (242.58,57.36) and (235.5,68.62) .. (235.5,60) ;
\draw  [line width=3] [line join = round][line cap = round] (301.5,242) .. controls (295.94,242) and (301.07,248.04) .. (303.5,245) .. controls (309.01,238.11) and (298.15,239.7) .. (297.5,241) .. controls (295.85,244.3) and (298.64,249.05) .. (303.5,245) .. controls (308.93,240.48) and (299.76,239.49) .. (298.5,242) .. controls (295.61,247.77) and (303.5,247.5) .. (303.5,245) ;
\draw  [line width=3] [line join = round][line cap = round] (301.5,61) .. controls (301.5,51.78) and (290.02,65.16) .. (299.5,62) .. controls (299.95,61.85) and (300.03,61) .. (300.5,61) ;
\draw  [line width=3] [line join = round][line cap = round] (299.5,179) .. controls (299.5,180.33) and (298.17,184.67) .. (301.5,183) .. controls (304.9,181.3) and (299.5,175.3) .. (299.5,181) ;
\draw  [line width=3] [line join = round][line cap = round] (234.5,180) .. controls (234.5,184.12) and (238.82,179.37) .. (239.5,178) .. controls (240.72,175.56) and (234.5,175.19) .. (234.5,178) ;
\draw  [line width=3] [line join = round][line cap = round] (235.5,120) .. controls (233.32,124.36) and (240.82,122.53) .. (237.5,117) .. controls (236.77,115.79) and (234.5,118.59) .. (234.5,120) ;
\draw  [line width=3] [line join = round][line cap = round] (232.5,244) .. controls (232.5,251.35) and (248.2,243) .. (234.5,243) ;

\draw (210,271.4) node [anchor=north west][inner sep=0.75pt]    {$p=0$};
\draw (284,21.4) node [anchor=north west][inner sep=0.75pt]    {$p=1$};
\draw (511,234.4) node [anchor=north west][inner sep=0.75pt]    {$q=0$};
\draw (513,170.4) node [anchor=north west][inner sep=0.75pt]    {$q=1$};
\draw (512,111.4) node [anchor=north west][inner sep=0.75pt]    {$q=2$};
\draw (514,51.4) node [anchor=north west][inner sep=0.75pt]    {$q=3$};
\draw (123,30.4) node [anchor=north west][inner sep=0.75pt]    {$n=p+q=3$};
\draw (125,81.4) node [anchor=north west][inner sep=0.75pt]    {$n=p+q=2$};
\draw (120,133.4) node [anchor=north west][inner sep=0.75pt]    {$n=p+q=1$};
\draw (110,185.4) node [anchor=north west][inner sep=0.75pt]    {$n=p+q=0$};
\draw (334,205) node [anchor=north west][inner sep=0.75pt]   [align=left] {dim2};
\draw (345,273) node [anchor=north west][inner sep=0.75pt]   [align=left] {dimension2};
\draw (343,144) node [anchor=north west][inner sep=0.75pt]   [align=left] {dim2};
\draw (269,276) node [anchor=north west][inner sep=0.75pt]   [align=left] {dim1};

\end{tikzpicture}
\end{figure}

Finally, $$\Ext_{UL\left( \K \right) }^{n}\left(\K, \K \right)= 
\begin{cases}
\K \quad {\rm if} \quad n=0, \\
\K \oplus \K \quad {\rm if} \quad n \geq 1.
\end{cases}$$

\textbf{$\bigstar$ Case 2: $M=\K$ the trivial $\K$-bimodule, and $N=N^{s}$ the non-trivial simple symmetric $\K$-bimodule}

We apply the first spectral sequence of Proposition \ref{Spectral Seq}:
$$E_{2}^{pq}= H^{p} \left(  \mathfrak{h}_{\lie}, \Hom \left( Y, \HL^{q}\left( \mathfrak{h}, X \right)   \right) \right) \Rightarrow \Ext_{UL\left( \mathfrak{h} \right) }^{p+q}  \left( Y^{a}, X \right). $$

We set $Y=\K$ and $X=N^{s}$. By Corollary \ref{coho}, $\HL^{q}\left( \K, N^{s}\right)=0 $ $\forall q \geq 0$.
Thus $E_{2}^{pq}= H^{p} \left( \K, \Hom \left( \K, \HL^{q}\left( \K, N^{s} \right)   \right) \right)= 0$.

Finally $$\Ext_{UL\left( \K \right) }^{n}\left(\K, N^{s} \right)= 0.$$

\textbf{$\bigstar$ Case 3: $M=\K$ the trivial $\K$-bimodule, and $N=N^{a}$ the non-trivial simple antisymmetric $\K$-bimodule}

We apply the first spectral sequence of Proposition \ref{Spectral Seq}:
$$E_{2}^{pq}= H^{p} \left(  \mathfrak{h}_{\lie}, \Hom \left( Y, \HL^{q}\left( \mathfrak{h}, X \right)   \right) \right) \Rightarrow \Ext_{UL\left( \mathfrak{h} \right) }^{p+q}  \left( Y^{a}, X \right). $$

We set $Y=\K$ and $X=N^{a}$. Since by Corollary \ref{coho} 
$$\HL^{q} \left(  \K, N^{a} \right)= \begin{cases}
N^{a} \quad {\rm if} \quad q=0 \\
0 \quad {\rm if} \quad q \geq 1,
\end{cases}$$
it follows that
$$E_{2}^{pq}= H^{p} \left(  \K, \Hom \left( \K, \HL^{q}\left( \K, N^{a} \right)   \right) \right)= 
\begin{cases}
H^{p} \left(  \K, \Hom \left( \K,  N^{a} \right) \right) \quad {\rm if} \quad q=0 \\
0 \quad {\rm if} \quad q \geq 1.
\end{cases}$$

Furthermore, $\Hom \left( \K,  N^{a} \right)\simeq N^{a}$ is a non-trivial $\K$-bimodule.
Using Lemma $\ref{Cheval}$, one has:
$$H^{p} \left(  \K, \Hom \left( \K,  N^{a} \right) \right)= H^{p} \left(  \K,  N^{a} \right)=0.$$
 
Finally 
$$\Ext_{UL\left( \K \right) }^{n}\left(\K, N^{a} \right)= 0.$$

\textbf{$\bigstar$ Case 4: $M= M^{a}$ the non-trivial simple antisymmetric $\K$-bimodule, and $N=\K$ the trivial $\K$-bimodule}
 
We apply the first spectral sequence of Proposition \ref{Spectral Seq}:
$$E_{2}^{pq}= H^{p} \left(  \mathfrak{h}_{\lie}, \Hom \left( Y, \HL^{q}\left( \mathfrak{h}, X \right)   \right) \right) \Rightarrow \Ext_{UL\left( \mathfrak{h} \right) }^{p+q}  \left( Y^{a}, X \right) .$$

We set $Y=M^{a}$ and $X=\K$. Since by Corollary \ref{coho}
$$\HL^{q} \left(  \K, \K \right)= \K$$
$\forall q \geq 0$, it follows that
$$E_{2}^{pq}= H^{p} \left(  \K, \Hom \left(M^{a}, \HL^{q}\left( \K, \K \right)   \right) \right)= H^{p} \left(  \K, \Hom \left( M^{a}, \K   \right) \right).$$

Moreover, in $H^{p} \left(  \K, \Hom \left( M^{a}, \K   \right) \right)$, $\K$ does not act trivially on $\Hom \left( M^{a}, \K   \right)$ and  $\Hom \left( M^{a}, \K   \right)\simeq M^{\star}$ is a non-trivial $\K$-bimodule.
Using Lemma \ref{Cheval}, one has:
$$H^{p} \left(  \K, \Hom \left( M^{a}, \K   \right) \right)=H^{p} \left(  \K, M^{\star} \right)=0.$$

Finally 
 $$\Ext_{UL\left( \K \right) }^{n}\left(M^{a}, \K \right)= 0.$$
 
\textbf{$\bigstar$ Case 5: $M= M^{a}$ the non-trivial simple antisymmetric $\K$-bimodule, $N= N^{s}$ the non-trivial simple symmetric $\K$-bimodule}
 
We apply the first spectral sequence of Proposition \ref{Spectral Seq}:
$$E_{2}^{pq}= H^{p} \left(  \mathfrak{h}_{\lie}, \Hom \left( Y, \HL^{q}\left( \mathfrak{h}, X \right)   \right) \right) \Rightarrow \Ext_{UL\left( \mathfrak{h} \right) }^{p+q}  \left( Y^{a}, X \right). $$

We set $Y=M^{a}$ and $X=N^{s}$. By Corollary \ref{coho} 
$$\HL^{q} \left(  \K, N^{s} \right)= 0$$ 
$\forall q \geq 0$. Then $$E_{2}^{pq}= H^{p} \left(  \K, \Hom \left(M^{a}, \HL^{q}\left( \K, N^{s} \right)   \right) \right)= 0.$$
 
Finally 
 $$\Ext_{UL\left( \K \right) }^{n}\left(M^{a}, N^{s} \right)= 0.$$

\textbf{$\bigstar$ Case 6: $M= M^{a}$ the non-trivial simple antisymmetric $\K$-bimodule, and $N= N^{a}$ the non-trivial simple antisymmetric $\K$-bimodule}

We apply the first spectral sequence of Proposition \ref{Spectral Seq}:
$$E_{2}^{pq}= H^{p} \left(  \mathfrak{h}_{\lie}, \Hom \left( Y, \HL^{q}\left( \mathfrak{h}, X \right)   \right) \right) \Rightarrow \Ext_{UL\left( \mathfrak{h} \right) }^{p+q}  \left( Y^{a}, X \right). $$

We set $Y=M^{a}$ and $X=N^{a}$. By Corollary \ref{coho} 
$$\HL^{q} \left(  \K, N^{a} \right)= \begin{cases}
N^{a} \quad {\rm if} \quad q=0 \\
0 \quad {\rm if} \quad q \geq 1.
\end{cases}$$
It follows that 
$$E_{2}^{pq}= H^{p} \left(  \K, \Hom \left(M^{a} , \HL^{q}\left( \K, N^{a} \right)   \right) \right)= 
\begin{cases}
H^{p} \left(  \K, \Hom \left( M^{a},  N^{a} \right) \right) \quad {\rm if} \quad q=0 \\
0 \quad {\rm if} \quad q \geq 1.
\end{cases}$$

In $H^{p} \left(  \K, \Hom \left( M^{a},  N^{a} \right) \right)$, one determines first if $\Hom \left( M^{a},  N^{a} \right)\simeq \K$ is a trivial or a non-trivial module. Let  $e \in \K$, $ 1_{m} \in M^{a}$, $ 1_{n} \in N^{a}$ and $ f \in \Hom \left( M^{a},  N^{a} \right)$. Let us denote the action in $M$ by setting $e.m=\lambda_{\rho}m$
and the action in $N$ by setting $e.n=\lambda_{\sigma}n$. 
Then, $\left( e.f \right)\left( m \right)= -f\left( e.m \right)+ e.f\left( m \right)= -f \lambda_{\rho}m + f \lambda_{\sigma}m = \left( \lambda_{\rho} - \lambda_{\sigma}\right)fm \neq 0$ if $\rho \neq \sigma $.

Conclusion:
$$ \begin{cases}
\Hom \left( M^{a}_{\rho},  N^{a}_{\sigma} \right)\simeq \K \quad {\rm is} \quad {\rm trivial} \quad {\rm if} \quad \rho = \sigma \\
\Hom \left( M^{a}_{\rho},  N^{a}_{\sigma} \right)\simeq \K \quad {\rm is} \quad {\rm not}\quad {\rm trivial} \quad {\rm if} \quad \rho \neq \sigma.
\end{cases}$$

Thus for $\rho \neq \sigma$, we have
$$E_{2}^{pq}= H^{p} \left(  \K, \K   \right)= 0,$$
and for $\rho = \sigma$, we have
$$E_{2}^{pq}= H^{p} \left(  \K, \K   \right)=
\begin{cases}
\K \quad {\rm for} \quad p=0,1 \quad {\rm and} \quad q=0 \\
0 \quad {\rm otherwise}.
\end{cases}$$

Finally if $\rho = \sigma$, 
$$\Ext_{UL\left( \K \right) }^{n}\left(M^{a}, M^{a} \right)=
 \begin{cases}
\K \quad {\rm for} \quad n=0,1  \\
0 \quad {\rm otherwise}.
\end{cases}$$

\textbf{$\bigstar$ Case 7: $M= M^{s}$ the non-trivial simple symmetric $\K$-bimodule, and $N=\K$ the trivial simple $\K$-bimodule}

From now on, we apply the second spectral sequence of Proposition \ref{Spectral Seq}:
$$E_{2}^{pq}= H^{p} \left(  \mathfrak{h}_{\lie}, \Hom \left( Z, \Ext_{UL\left( {\mathfrak h} \right) }^{q}\left( U{\mathfrak{h}_{\lie}}^{s}, X \right)   \right) \right) \Rightarrow \Ext_{UL\left( \mathfrak{h} \right) }^{p+q}  \left( Z^{s}, X \right) .$$

We set $Z=M^{s}$ and $X=\K$ and thus
$$E_{2}^{pq}= H^{p} \left(\K, \Hom \left( M^{s}, \Ext_{UL\left( \K \right) }^{q}\left( U{\K}^{s}, \K \right)   \right) \right).$$

By Corollary \ref{prop22}, one has :
$$\Ext_{UL\left( \K \right) }^{q}\left( U{\K}^{s}, \K \right) \simeq  
\begin{cases}
\Hom \left( \K, \HL^{q-1}\left( \K, \K \right)   \right) \quad {\rm for} \quad q \geqslant 2\\
\coker(f) \quad {\rm for} \quad q=1\\
\Ker(f) \quad {\rm for} \quad q=0.
\end{cases}$$

where $f: \K \longrightarrow \Hom \left( \K, \HL^{0}\left( \K, \K \right)   \right)\simeq \Hom \left( \K, \K \right)\simeq \K$ is given by :
$$f\left(m  \right) \left( e \right)= e\cdot m + m\cdot e$$
$\forall e \in K$ and $\forall m \in K$.
In this case, $f$ is the zero map, because $\K$ is a trivial bimodule. So, $\Ker(f)=\K$  and $\coker(f)=\K$.
Therefore, 
$$\Ext_{UL\left( \K \right) }^{q}\left( U{\K}^{s}, \K \right)=\K$$
$\forall q \geqslant 0$.  And then, 
$$E_{2}^{pq}= H^{p} \left( \K, \Hom \left( M^{s}, \Ext_{UL\left( \K \right) }^{q}\left( U{\K}^{s}, \K \right)   \right) \right)= H^{p} \left( \K, \Hom \left( M^{s}, \K \right) \right).$$

Moreover, in $H^{p} \left(  \K, \Hom \left( M^{s}, \K   \right) \right)$, $\K$ does not act trivially on $\Hom \left( M^{s}, \K   \right)$ and  $\Hom \left( M^{s}, \K   \right)\simeq M^{\star}$ is a non-trivial $\K$-module.
Using Lemma \ref{Cheval}, one has:
$$H^{p} \left(  \K, \Hom \left( M^{s}, \K   \right) \right)=H^{p} \left(  \K, M^{\star} \right)=0.$$

Finally
 $$\Ext_{UL\left( \K \right) }^{n}\left(M^{s}, \K \right)= 0.$$

\textbf{$\bigstar$ Case 8: $M= M^{s}$ the non-trivial simple symmetric $\K$-bimodule, and $N=N^{s}$ the non-trivial simple symmetric $\K$-bimodule}

We apply the second spectral sequence of Proposition \ref{Spectral Seq}:
$$ E_{2}^{pq}= H^{p} \left( {\mathfrak h}_{\lie}, \Hom \left( Z, \Ext_{UL\left( {\mathfrak h} \right) }^{q}\left( U{{\mathfrak h}_{\lie}}^{s}, X \right)   \right) \right) \Rightarrow \Ext_{UL\left( {\mathfrak h} \right) }^{p+q}  \left( Z^{s}, X \right). $$

We set $Z=M^{s}$ and $X=N^{s}$. This gives
$$ E_{2}^{pq}= H^{p} \left( \K, \Hom \left( M^{s}, \Ext_{UL\left( \K \right) }^{q}\left( U{\K}^{s}, N^{s}\right)   \right) \right).$$

Applying Corollary \ref{prop22}, one has:
$$Ext_{UL\left( \K \right) }^{q}\left( U{\K}^{s}, N^{s} \right) \simeq  
\begin{cases}
\Hom \left( \K, \HL^{q-1}\left( \K,  N^{s} \right)   \right)= 0 \quad {\rm for} \quad q \geqslant 2\\
\coker(f) = 0 \quad {\rm for} \quad q=1\\
\Ker(f) =  N^{s} \quad {\rm for} \quad q=0.
\end{cases}$$

where $f:  N^{s} \longrightarrow \Hom \left( \K, \HL^{0}\left( \K,  N^{s} \right) \right)= \lbrace 0\rbrace $   is the zero map. Therefore, 
$$ E_{2}^{pq}= H^{p} \left( \K, \Hom \left( M^{s}, \Ext_{UL\left( \K \right) }^{q}\left( U{\K}^{s}, N^{s}\right)   \right) \right)=
\begin{cases}
H^{p} \left( \K, \Hom \left( M^{s}, N^{s} \right)\right) \quad {\rm for} \quad q=0\\
 0 \quad {\rm otherwise}.
\end{cases}$$

As for the case 6,
$$ \begin{cases}
\Hom \left( M^{s},  N^{s} \right)\simeq \K \quad {\rm is} \quad {\rm trivial} \quad {\rm if} \quad \rho = \sigma \\
\Hom \left( M^{s},  N^{s} \right)\simeq \K \quad {\rm is} \quad {\rm not}\quad {\rm trivial} \quad {\rm if} \quad \rho \neq \sigma.
\end{cases}$$
Thus for $\rho \neq \sigma$,
$$E_{2}^{pq}= H^{p} \left(  \K, \K   \right)= 0,$$
and for $\rho = \sigma$,
$$E_{2}^{pq}= H^{p} \left(  \K, \K   \right)=
\begin{cases}
\K \quad {\rm for} \quad p=0,1 \quad {\rm and} \quad q=0 \\
0 \quad {\rm otherwise}.
\end{cases}$$

Finally if $\rho = \sigma$, 
 $$\Ext_{UL\left( \K \right) }^{n}\left(M^{s}, M^{s} \right)=
 \begin{cases}
\K \quad {\rm for} \quad n=0,1  \\
0 \quad {\rm otherwise}.
\end{cases}$$

\textbf{$\bigstar$ Case 9: $M= M^{s}$ the non-trivial simple symmetric $\K$-bimodule, $N=N^{a}$ the non-trivial simple antisymmetric $\K$-bimodule}

We apply the second spectral sequence of Proposition \ref{Spectral Seq}:
$$E_{2}^{pq}= H^{p} \left(  \mathfrak{h}_{\lie}, \Hom \left( Z, \Ext_{UL\left( \mathfrak{h} \right) }^{q}\left( U{\mathfrak{h}_{\lie}}^{s}, X \right)   \right) \right) \Rightarrow \Ext_{UL\left( \mathfrak{h} \right) }^{p+q}  \left( Z^{s}, X \right) .$$

We set $Z=M^{s}$ and $X=N^{a}$ and this gives
$$ E_{2}^{pq}= H^{p} \left( \K, \Hom \left( M^{s}, \Ext_{UL\left( \K \right) }^{q}\left( U{\K}^{s}, N^{a}\right)   \right) \right).$$
By Corollary \ref{coho}, 
$$\HL^{q-1} \left(  \K, N^{a} \right)= \begin{cases}
N^{a} \quad {\rm if} \quad q=1 \\
0 \quad {\rm otherwise}.
\end{cases}$$
Applying Corollary \ref{prop22}, it follows that
$$\Ext_{UL\left( \K \right) }^{q}\left( U{\K}^{s}, N^{a} \right) \simeq  
\begin{cases}
\Hom \left( \K, \HL^{q-1}\left( \K,  N^{a} \right)   \right)= 0 \quad {\rm for} \quad q \geqslant 2\\
\coker(f) = 0 \quad {\rm for} \quad q=1\\
\Ker(f) =  0 \quad {\rm for} \quad q=0.
\end{cases}$$

where $f: N^{a} \longrightarrow \Hom \left( \K, \HL^{0}\left( \K, N^{a} \right)   \right)\simeq \Hom \left( \K, N^{a} \right)\simeq N^{a}$ is given by:
$$f\left(m  \right) \left( e \right)= e\cdot m + m\cdot e = e\cdot m$$
$\forall e \in \K$ and $\forall m \in N^{a}$. 

So, $E_{2}^{pq}=0$ for all $p,q$. 

Finally, 
 $$\Ext_{UL\left( \K \right) }^{n}\left(M^{s}, N^{a} \right)=
 0.$$ 
 \end{proof}
 
 \begin{rem}
\begin{itemize}
\item[(a)] Note that the theorem implies that the Ext-dimension of the category of bimodules (finite-dimensional or not) is infinite. This contrasts with the category of finite-dimensional bimodules over a finite-dimensional complex simple Leibniz algebra where the Ext dimension is $2$. 
\item[(b)] It would be interesting to compute these Ext-groups (and then the Gabriel quivers) also for different fields, like ${\mathbb R}$, ${\mathbb Q}$ or $\F_q$. In these cases, there are more simple objects. 
\end{itemize} 
 \end{rem}

 \subsection{The Gabriel quiver of $L\left( \mathbb{K} \right)$}

Let $\mathfrak{h}$ be a Leibniz algebra. Denote by $L\left( \mathfrak{h} \right)$ the category of finite-dimensional $\mathfrak{h}$-bimodules.

 \begin{defi} \label{quiv}[Gabriel quiver] (see Section 4, in \cite{JL-TP1})
A {\it quiver} is a directed graph. Let \textbf{A} be a $\K$-linear abelian category whose objects have finite length. Let us denote by \textbf{Q(A)} its {\it Gabriel quiver}:
The vertices of the Gabriel quiver \textbf{Q(A)} are the isomorphism classes of simple objects of \textbf{A}.
If $S_{1}$ and $S_{2}$ are two simple objects, then \textbf{Q(A)} has exactly $\dim_{\K}\Ext_{A }^{1}\left( S_{1}, S_{2} \right)$ arrows from (the class of) $S_{1}$ to (the class of) $S_{2}$.
 \end{defi}
 
 As discussed before, one knows that the simple objects of $L\left(\mathbb{K} \right)$ are $1$-dimensional for the trivial Leibniz algebra $\K=\C$, and that they are given by modules of the kind $M^s$ or $M^a$. Let $M$ be a simple object of $L\left(\mathbb{K} \right)$. Therefore, there are three possibilities for $M$:
 \begin{itemize}
 \item[(1)] $M= \K$ the trivial bimodule,
\item[(2)] $M= M^{a}$ the non-trivial antisymmetric bimodule,
\item[(3)] $M= M^{s}$ the non-trivial symmetric bimodule.
\end{itemize}
Thus the vertices of $Q\left(L\left(\mathbb{K} \right)\right)$ are $\K$, $M^{a}$ and $M^{s}$.
Furthermore, by Theorem $\ref{comput}$, it follows that:
 \begin{itemize}
 \item[(1)] $\dim \Ext_{UL\left( \K \right) }^{1}\left(\K, \K \right)= 2$,
\item[(2)] $\dim \Ext_{UL\left( \K \right) }^{1}\left(M^{a}, M^{a} \right)= 1$,
\item[(3)] $\dim \Ext_{UL\left( \K \right) }^{1}\left(M^{s}, M^{s} \right)= 1$.
\end{itemize}
Therefore, $Q\left(L\left(\mathbb{K} \right)\right)$ looks as follows :

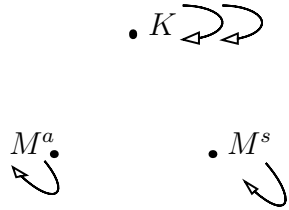
\begin{figure}[H]
	\centering
	\tikzset{every picture/.style={line width=0.75pt}} 

\begin{tikzpicture}[x=0.75pt,y=0.75pt,yscale=-1,xscale=1]

\draw  [fill={rgb, 255:red, 255; green, 255; blue, 255 }  ,fill opacity=1 ] (244.96,165.06) .. controls (249.36,164.19) and (247.27,156.75) .. (240.3,148.44) -- (240.3,148.44) .. controls (247.27,156.75) and (249.36,164.19) .. (244.96,165.06) -- cycle ;\draw  [fill={rgb, 255:red, 255; green, 255; blue, 255 }  ,fill opacity=1 ] (244.96,165.06) .. controls (241.41,165.77) and (234.73,161.99) .. (228.62,156.14) -- (231.21,155.63) -- (224.37,151.61) -- (226.03,156.66) -- (228.62,156.14) .. controls (234.73,161.99) and (241.41,165.77) .. (244.96,165.06) -- cycle ;
\draw  [fill={rgb, 255:red, 255; green, 255; blue, 255 }  ,fill opacity=1 ] (360.93,170.36) .. controls (363.84,167.78) and (359.8,158.44) .. (351.9,149.49) -- (351.9,149.49) .. controls (359.8,158.44) and (363.84,167.78) .. (360.93,170.36) -- cycle ;\draw  [fill={rgb, 255:red, 255; green, 255; blue, 255 }  ,fill opacity=1 ] (360.93,170.36) .. controls (358.61,172.4) and (352.61,169.56) .. (346.28,163.82) -- (347.63,162.63) -- (341.35,158.81) -- (344.93,165.01) -- (346.28,163.82) .. controls (352.61,169.56) and (358.61,172.4) .. (360.93,170.36) -- cycle ;
\draw  [fill={rgb, 255:red, 255; green, 255; blue, 255 }  ,fill opacity=1 ] (350,78.7) .. controls (350,73.9) and (341.05,70) .. (330,70) -- (330,70) .. controls (341.05,70) and (350,73.9) .. (350,78.7) -- cycle ;\draw  [fill={rgb, 255:red, 255; green, 255; blue, 255 }  ,fill opacity=1 ] (350,78.7) .. controls (350,82.3) and (344.97,85.39) .. (337.8,86.71) -- (337.8,84.11) -- (330,87.4) -- (337.8,89.31) -- (337.8,86.71) .. controls (344.97,85.39) and (350,82.3) .. (350,78.7) -- cycle ;
\draw  [fill={rgb, 255:red, 255; green, 255; blue, 255 }  ,fill opacity=1 ] (330,78.6) .. controls (330,73.85) and (321.05,70) .. (310,70) -- (310,70) .. controls (321.05,70) and (330,73.85) .. (330,78.6) -- cycle ;\draw  [fill={rgb, 255:red, 255; green, 255; blue, 255 }  ,fill opacity=1 ] (330,78.6) .. controls (330,82.09) and (325.16,85.1) .. (318.2,86.45) -- (318.2,83.65) -- (310,87.2) -- (318.2,89.25) -- (318.2,86.45) .. controls (325.16,85.1) and (330,82.09) .. (330,78.6) -- cycle ;
\draw  [line width=3] [line join = round][line cap = round] (285,84) .. controls (285,84.33) and (285,84.67) .. (285,85) ;
\draw  [line width=3] [line join = round][line cap = round] (245,145) .. controls (245,145) and (245,145) .. (245,145) ;
\draw  [line width=3] [line join = round][line cap = round] (325,145) .. controls (325,145) and (325,145) .. (325,145) ;

\draw (291,72.4) node [anchor=north west][inner sep=0.75pt]    {$K$};
\draw (221,132.4) node [anchor=north west][inner sep=0.75pt]    {$M^{a}$};
\draw (331,132.4) node [anchor=north west][inner sep=0.75pt]    {$M^{s}$};

\end{tikzpicture}
	\caption{Gabriel quiver when $\mathfrak{h}= \K = \mathbb{C}  $}
\end{figure}


\section{Gabriel quiver for $\mathfrak{h}=V_{n} \times_{hs} \mathfrak{sl}_{2}(\C)$}

\subsection{Preliminaries}

\begin{prop}\label{prop hemi}[Feldvoss] (see Theorem 7.12, in \cite{JF})
Every simple Leibniz algebra $\mathfrak{h}$ over the field $\mathbb{C}$ is a hemi-semidirect product of a simple Lie algebra $\mathfrak{g}$ and a simple $\mathfrak{g}$-bimodule $\leib \left( \mathfrak{h} \right)$, i.e.  $\mathfrak{h}=\mathfrak{g} \times_{hs} \leib \left( \mathfrak{h} \right)$.
\end{prop}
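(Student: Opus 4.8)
The plan is to exhibit $\mathfrak h$ as a \emph{split} abelian extension of its canonical Lie algebra $\mathfrak g:=\mathfrak h_{\lie}=\mathfrak h/\leib(\mathfrak h)$ by the ideal of squares $V:=\leib(\mathfrak h)$, and then to read off that the resulting bracket is precisely the hemi-semidirect one.

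\textbf{Step 1: $V$ is abelian and $[V,\mathfrak h]=0$.} I would first linearize the defining identity $[x,[y,z]]=[[x,y],z]+[y,[x,z]]$ in $x$ and $y$ and add it to the version obtained by interchanging $x$ and $y$; this gives $[[x,y]+[y,x],z]=0$ for all $x,y,z\in\mathfrak h$, and putting $y=x$ together with $\ch\K=0$ yields $[[x,x],z]=0$. Since $V$ is spanned by the squares $[x,x]$, it follows that $[V,\mathfrak h]=0$, and in particular $[V,V]=0$, so $V$ is an abelian ideal. Note that this is where the non-Lie nature is confined: the \emph{right} action $[\mathfrak h,V]$ need not vanish, and it is exactly what will carry the module structure.

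\textbf{Step 2: the extension splits.} By the structural result of Feldvoss recalled above (Proposition~7.2 of \cite{JF}), $\mathfrak g$ is a simple Lie algebra — hence semisimple — and $V$ is a simple $\mathfrak g$-module. Since $V$ is a solvable ideal with semisimple quotient $\mathfrak h/V=\mathfrak g$, it is the solvable radical of the Leibniz algebra $\mathfrak h$. Over a field of characteristic $0$ the Levi--Mal'cev theorem for Leibniz algebras (Barnes) then provides a subalgebra $\mathfrak s\subseteq\mathfrak h$ with $\mathfrak h=V\oplus\mathfrak s$ as vector spaces and $\mathfrak s$ semisimple; as a semisimple Leibniz algebra has trivial ideal of squares, $\mathfrak s$ is in fact a Lie subalgebra, isomorphic to $\mathfrak h/V=\mathfrak g$.

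\textbf{Step 3: identifying the bracket.} In the decomposition $\mathfrak h=V\oplus\mathfrak s$, take $v,w\in V$ and $s,t\in\mathfrak s$. Step~1 gives $[v,w]=[v,t]=0$; since $\mathfrak s$ is a subalgebra, $[s,t]\in\mathfrak s$; and since $V$ is an ideal, $[s,w]\in V$. Writing $s\cdot w:=[s,w]$ for the induced action — which, under $\mathfrak s\cong\mathfrak g$, is precisely the given simple $\mathfrak g$-module structure on $V$ — one obtains $[v+s,\,w+t]=[s,t]+s\cdot w$, with $[s,t]\in\mathfrak s\cong\mathfrak g$ and $s\cdot w\in V$. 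Under the bijection $v+s\leftrightarrow(v,s)$ this is exactly the hemi-semidirect bracket $[(v,s),(w,t)]=(s\cdot w,[s,t])$, so $\mathfrak h\cong V\times_{hs}\mathfrak g=\leib(\mathfrak h)\times_{hs}\mathfrak h_{\lie}$.

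The hard part is Step~2, the splitting of $0\to V\to\mathfrak h\to\mathfrak g\to 0$. If one prefers to avoid invoking the full Leibniz Levi--Mal'cev theorem, one can argue with cocycles: a linear section $\sigma:\mathfrak g\to\mathfrak h$ has as its \emph{defect} a $2$-cochain of $\mathfrak g$ valued in a finite-dimensional $\mathfrak g$-module assembled from $V$ and the adjoint action, whose class vanishes by Whitehead's second lemma (Lemma~\ref{whitehead}) because $\mathfrak g$ is semisimple and $\ch\K=0$; one then still has to correct $\sigma$ to a genuine subalgebra, and here the lack of antisymmetry of the Leibniz bracket requires some bookkeeping.
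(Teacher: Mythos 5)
Your argument is correct, and it is worth noting that the paper itself gives no proof of this proposition at all --- it is quoted verbatim from Feldvoss (Theorem~7.12 of \cite{JF}) --- so any comparison is really with Feldvoss's structure theory rather than with anything in this text. Your route (identify $\leib(\mathfrak h)$ as an abelian, left-central ideal; observe it is the radical since $\mathfrak h_{\lie}$ is simple by Proposition~7.2; split off a Levi subalgebra via Barnes's Levi--Mal'cev theorem for Leibniz algebras; read off the hemi-semidirect bracket) is sound. Two small remarks. First, Step~1 is even easier than you make it: putting $y=x$ directly into the left Leibniz identity $[x,[x,z]]=[[x,x],z]+[x,[x,z]]$ gives $[[x,x],z]=0$ with no linearization and no hypothesis on the characteristic. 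Second, in Step~2 the cleanest way to see that the Levi complement $\mathfrak s$ is a genuine Lie subalgebra is simply $\leib(\mathfrak s)\subseteq \mathfrak s\cap\leib(\mathfrak h)=\mathfrak s\cap V=0$, rather than appealing to a general statement about semisimple Leibniz algebras. The module identity $[[s,t],w]=[s,[t,w]]-[t,[s,w]]$ needed in Step~3 is exactly the left Leibniz identity, so the induced action really is the given simple $\mathfrak g$-module structure on $\leib(\mathfrak h)$. Everything here implicitly uses finite-dimensionality (for the Levi theorem and for Proposition~7.2), which is consistent with the paper's standing conventions; your closing cocycle-theoretic alternative is honestly flagged as incomplete and is not needed, since the quoted Levi theorem does the work.
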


\textbf{Example:}
The Lie algebra $\mathfrak{g}=\mathfrak{sl}_{2}\left( \mathbb{C} \right)$ is simple. 
Moreover, for any $m\geq0$, there exists a unique simple  $\mathfrak{g}$-module $V_{m}$ whose dimension is $m+1$. Applying Proposition $\ref{prop hemi}$, the hemi-semidirect product $\mathfrak{h}=V_{m} \times_{hs} \mathfrak{sl}_{2}(\C)$ is a simple Leibniz algebra such that $\mathfrak{h}_{\lie}=\mathfrak{sl}_{2}\left( \mathbb{C}\right)$ and $\leib \left( \mathfrak{h} \right)= V_{m}$.

\begin{rem} \label{dual irred} (On the dual of a simple module)
\begin{itemize}
\item[(i)] Let $V_{n}$ be a simple $\mathfrak{sl}_{2}(\C)$-module. Then
$V_{n}^{\star} \cong V_{n}$. 
Indeed in dimension $n+1$, there exists only one simple $\mathfrak{sl}_{2}(\C)$-module, namely $V_{n}$.
\item[(ii)] The (co)adjoint module is $\mathfrak{sl}_{2}(\C)^{\star} \cong \mathfrak{sl}_{2}(\C) \cong V_{2} $.
\end{itemize}
\end{rem}

\subsection{Computation of Ext-groups} 

In this part, we collect several results in order to compute Ext-groups.

\begin{theo}\label{JM-FW}[Mugni\'ery-Wagemann] (see Theorem 2.3 in \cite{JM-FW2})
Let $\mathfrak{h}$ be a finite dimensional simple Leibniz algebra over $\C$ such that $\leib \left( \mathfrak{h} \right) \not \cong \mathfrak{h}_{\lie}$.

All groups  $\Ext_{UL\left( \mathfrak{h} \right) }^{2}\left(M, N \right)$ between simple finite dimensional $\mathfrak{h}$-bimodules are zero, except 
$$\Ext_{UL\left( \mathfrak{h} \right) }^{2}\left(M^{s}, N^{a} \right),$$
with $M \in \lbrace \leib \left( \mathfrak{h} \right)^{\star}, \mathfrak{h}_{\lie}^{\star} \rbrace$ and $N \in \lbrace \leib \left( \mathfrak{h} \right), \mathfrak{h}_{\lie} \rbrace$, which is $1$-dimensional.
Moreover, we have:

$\bullet$ $\Ext_{UL\left( \mathfrak{h} \right) }^{1}\left(M^{s}, \K \right)$, and $\Ext_{UL\left( \mathfrak{h} \right) }^{1}\left(\K, N^{a} \right)$ are $1$-dimensional for $M$ and $N \in \lbrace \leib \left( \mathfrak{h} \right), {\mathfrak h}_{\lie} \rbrace$;

$\bullet$ $\Ext_{UL\left( \mathfrak{h} \right) }^{1}\left(M^{s}, N^{a} \right)\simeq \Hom_{U  \mathfrak{h}_{\lie}}\left(M, \widehat{N} \right)$, where
$$ \widehat{N}= \coker \left( h: N \longrightarrow \Hom\left(\mathfrak{h}, N \right) \right) \quad h\left( n \right)\left( x \right)= x\cdot n$$

$\bullet$ All other groups  $\Ext_{UL\left( \mathfrak{h} \right) }^{1}\left(M, N \right)$ between simple finite dimensional $\mathfrak{h}$-bimodules $M$ and $N$ are zero.
\end{theo}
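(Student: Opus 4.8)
The plan is to run the two change-of-rings spectral sequences of Proposition \ref{Spectral Seq}, exactly as in the trivial-algebra computation of Theorem \ref{comput}, but now over the simple Lie algebra $\mathfrak{g} = \mathfrak{h}_{\lie}$. By Proposition \ref{prop hemi} we may write $\mathfrak{h} = \mathfrak{g} \times_{hs} \leib(\mathfrak{h})$ with $\mathfrak{g}$ simple and $\leib(\mathfrak{h})$ a non-trivial simple $\mathfrak{g}$-module, and every simple bimodule is either $M^{s}$ or $M^{a}$ for a simple $\mathfrak{g}$-module $M$ (the trivial bimodule $\C$ being the common value at the trivial $M$). I organize the computation of $\Ext^{1}$ and $\Ext^{2}$ into two families: for an antisymmetric source $Y^{a}$ I use the first spectral sequence, whose $E_{2}$-page is built from the Leibniz cohomology $\HL^{q}(\mathfrak{h}, X)$; for a symmetric source $Z^{s}$ I use the second, whose $E_{2}$-page is built from $\Ext_{UL(\mathfrak{h})}^{q}(U{\mathfrak{h}_{\lie}}^{s}, X)$. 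The target $X$ ranges over $\C$, $N^{a}$ and $N^{s}$.

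The decisive structural input is the combination of Weyl's Theorem \ref{weyl} and Whitehead's Lemmas \ref{whitehead}. Weyl splits each $E_{2}^{pq}$ as $H^{p}(\mathfrak{g}, \C) \otimes (\,\cdot\,)^{\mathfrak{g}}$, where the $\mathfrak{g}$-invariants are pinned down by Schur's Lemma \ref{schur}; Whitehead then kills the columns $p = 1$ and $p = 2$ entirely, so the only surviving columns are $p = 0$ and $p \geq 3$. In total degree $1$ this leaves a single term $E_{2}^{0,1}$ with no possible differential into or out of it, so $\Ext^{1}$ is read straight off the edge: in the first sequence $\Ext_{UL(\mathfrak{h})}^{1}(Y^{a}, X) = \Hom_{\mathfrak{g}}(Y, \HL^{1}(\mathfrak{h}, X))$, and in the second $\Ext_{UL(\mathfrak{h})}^{1}(Z^{s}, X) = \Hom_{\mathfrak{g}}(Z, \Ext_{UL(\mathfrak{h})}^{1}(U{\mathfrak{h}_{\lie}}^{s}, X))$. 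Using the relation of \cite{JM-FW1} between $\Ext_{UL(\mathfrak{h})}^{q+1}(U{\mathfrak{h}_{\lie}}^{s}, \cdot)$ and $\HL^{q}(\mathfrak{h}, \cdot)$ (the general form of Corollary \ref{prop22}), one gets $\Ext_{UL(\mathfrak{h})}^{1}(U{\mathfrak{h}_{\lie}}^{s}, N^{a}) = \coker(f) = \widehat{N}$, so the edge formula becomes $\Ext_{UL(\mathfrak{h})}^{1}(M^{s}, N^{a}) \cong \Hom_{\mathfrak{g}}(M, \widehat{N})$, which is the claimed isomorphism.

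To make all $\Ext^{1}$ and the candidate $\Ext^{2}$ values explicit I would first assemble the Leibniz cohomology $\HL^{*}(\mathfrak{h}, X)$ of the simple Leibniz algebra from Feldvoss--Wagemann for $X \in \{\C, N^{a}, N^{s}\}$, then translate these into the groups $\Ext_{UL(\mathfrak{h})}^{*}(U{\mathfrak{h}_{\lie}}^{s}, X)$ via the kernel/cokernel-of-$f$ description, and finally evaluate every invariant term $\Hom_{\mathfrak{g}}(\,\cdot\,,\,\cdot\,)$ by Schur: such a term is at most one-dimensional, and is nonzero precisely when the source module is isomorphic to the relevant simple $\mathfrak{g}$-constituent of the coefficient. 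This Schur matching is exactly what confines the nonzero $\Ext^{1}$'s to the pairs $M^{s} \to \C$, $\C \to N^{a}$ and $M^{s} \to N^{a}$ with $M, N \in \{\leib(\mathfrak{h}), \mathfrak{h}_{\lie}\}$, and produces the candidate $E_{2}^{0,2} = \Hom_{\mathfrak{g}}(M, \Hom(\mathfrak{h}, \HL^{1}(\mathfrak{h}, N^{a})))$, one-dimensional for the same $M$ and $N$.

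The main obstacle lies in degree $2$, where the edge argument is not automatic. For total degree $2$ the only surviving position is $E_{\infty}^{0,2}$, but now $E_{\infty}^{0,2} = \Ker(d_{3}^{0,2})$ with $d_{3}^{0,2} : E_{3}^{0,2} \to E_{3}^{3,0}$, and since $\mathfrak{g}$ is simple $H^{3}(\mathfrak{g}, \C) \cong \C \neq 0$, so the target column $p = 3$ need not vanish. I would control this as follows. In the distinguished cases, target $N^{a}$ with $N$ a non-trivial simple module, the group $E_{2}^{3,0}$ is governed by $\Ext_{UL(\mathfrak{h})}^{0}(U{\mathfrak{h}_{\lie}}^{s}, N^{a}) = \Ker(f) = N^{\mathfrak{g}} = 0$ (equivalently by $\HL^{0}(\mathfrak{h}, X) = X^{\mathfrak{h}}$ carrying no trivial $\mathfrak{g}$-constituent), so $E_{3}^{3,0} = 0$, $d_{3}^{0,2} = 0$, and $\Ext_{UL(\mathfrak{h})}^{2}(M^{s}, N^{a}) = E_{2}^{0,2}$ is exactly the one-dimensional space found above. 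The genuinely delicate cases are those involving the trivial bimodule $\C$, where the invariants survive and $E_{2}^{3,0} \neq 0$: here the required vanishing $\Ext^{2} = 0$ forces one to identify the transgression $d_{3}^{0,2}$ itself and show it is injective. Pinning down this differential, most naturally through a direct cochain computation tying it to the Cartan class in $H^{3}(\mathfrak{g}, \C)$, is the crux of the argument; once it is settled, the remaining comparison of dimensions across the source/target combinations is routine Schur-lemma bookkeeping.
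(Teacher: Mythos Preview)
The paper does not prove Theorem \ref{JM-FW}; it is quoted from \cite{JM-FW2}. The only related argument in the paper is the proof of the complement (Theorem \ref{complement}), which re-runs cases 3, 6 and 8 of the original case-by-case argument. Your outline is exactly that method: split by symmetric versus antisymmetric source, feed in the Feldvoss--Wagemann computation of $\HL^{*}(\mathfrak{h},X)$, and read off the $E_2$-page using Weyl and Whitehead.

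Your strategy is sound, but you have mislocated the one point you flag as delicate. You worry that in the ``trivial bimodule'' cases the transgression $d_3^{0,2}\colon E_3^{0,2}\to E_3^{3,0}$ must be identified with the Cartan $3$-class and shown injective. In fact no such analysis is ever needed: once you actually assemble the Feldvoss--Wagemann input you will see that the spectral sequence never has both $E_2^{0,2}$ and $E_2^{3,0}$ nonzero. Concretely, for $X=\K$ one has $\HL^{q}(\mathfrak{h},\K)=0$ for $q\geq 1$, hence $E_2^{0,2}=0$ in the first sequence, and likewise $\Ext_{UL(\mathfrak{h})}^{q}(U\mathfrak{h}_{\lie}^{s},\K)=0$ for $q\geq 2$ in the second. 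For $X=N^{a}$ with $N$ nontrivial you already observed $\Ker(f)=N^{\mathfrak{g}}=0$, so $E_2^{3,0}=0$; and for $X=N^{s}$ the whole $E_2$ page collapses to one nonzero row. In every case the differential $d_3^{0,2}$ has zero source or zero target, and $\Ext^2$ is read directly from $E_2^{0,2}$ together with the Whitehead vanishing in columns $p=1,2$. So the ``crux'' you anticipate dissolves once the cohomological input is written down; the remaining work is exactly the Schur bookkeeping you describe.
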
 

\begin{rem}
Note that the condition $\leib \left( \mathfrak{h} \right) \not \cong \mathfrak{h}_{\lie}$ does not appear in \cite{JM-FW2}, but it is clear from the proof that this is implicitly imposed.  Before continuing, we will provide a complement to Theorem \ref{JM-FW}, namely we will compute the groups $\Ext_{UL(\mathfrak{h})}^{n}(M,N)$ without the restriction $\leib \left( \mathfrak{h} \right) \not \cong \mathfrak{h}_{\lie}$.
\end{rem}

\begin{theo} \label{complement} (Complement to Theorem \ref{JM-FW})
Let $\mathfrak{h}$ be a finite dimensional simple Leibniz algebra over $\C$. Then:

All groups  $\Ext_{UL\left( \mathfrak{h} \right) }^{2}\left(M, N \right)$ between simple finite dimensional $\mathfrak{h}$-bimodules are zero, except 
$$\Ext_{UL\left( \mathfrak{h} \right) }^{2}\left(M^{s}, N^{a} \right),$$
with $M \in \lbrace \leib \left( \mathfrak{h} \right)^{\star}, \mathfrak{h}_{\lie}^{\star} \rbrace$ and $N \in \lbrace \leib \left( \mathfrak{h} \right), \mathfrak{h}_{\lie} \rbrace$, which is $1$-dimensional if ${\mathfrak h}_\lie\not\cong \leib({\mathfrak h})$. The Ext-group is $4$-dimensional if ${\mathfrak h}_\lie\cong \leib({\mathfrak h})\cong N$ and 
${\mathfrak h}_\lie^\star\cong \leib({\mathfrak h})^\star\cong M$. 

$\bullet$ $\Ext_{UL\left( \mathfrak{h} \right) }^{1}\left(M^{s}, \K \right)$, and $\Ext_{UL\left( \mathfrak{h} \right) }^{1}\left(\K, N^{a} \right)$ are $1$- or $2$-dimensional for $M \in \lbrace \leib \left( \mathfrak{h} \right)^{\star}, \mathfrak{h}_{\lie}^{\star} \rbrace$ and $N \in \lbrace \leib \left( \mathfrak{h} \right), \mathfrak{h}_{\lie} \rbrace$

$\bullet$ $\Ext_{UL\left( \mathfrak{h} \right) }^{1}\left(M^{s}, N^{a} \right)\simeq \Hom_{U\mathfrak{h}_{\lie}}\left(M, \widehat{N} \right)$, where
$$ \widehat{N}= \coker \left( h: N \longrightarrow \Hom\left(\mathfrak{h}, N \right) \right) \quad h\left( n \right)\left( x \right)= x\cdot n.$$

$\bullet$ All other groups  $\Ext_{UL\left( \mathfrak{h} \right) }^{1}\left(M, N \right)$ between simple finite dimensional $\mathfrak{h}$-bimodules $M$ and $N$ are zero.
\end{theo}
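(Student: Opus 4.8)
The plan is to re-run the argument of Theorem \ref{JM-FW} (Mugni\'ery--Wagemann) verbatim and track exactly where the hypothesis $\leib(\mathfrak{h})\not\cong\mathfrak{h}_\lie$ is used; by Proposition \ref{prop hemi} we may write $\mathfrak{h}=\mathfrak{g}\times_{hs}\leib(\mathfrak{h})$ with $\mathfrak{g}=\mathfrak{h}_\lie$ simple. The only place the restriction enters is through the Schur-type count in Corollary \ref{corol schur}: when $N$ is a simple $\mathfrak{h}$-bimodule, $\Hom_{U\mathfrak{h}_\lie}(\mathfrak{h},N)$ is one-dimensional unless $\mathfrak{h}_\lie\cong\leib(\mathfrak{h})\cong N$ (as $\mathfrak{h}_\lie$-modules), in which case it is two-dimensional because $\mathfrak{h}\cong\mathfrak{h}_\lie\oplus\leib(\mathfrak{h})$ as an $\mathfrak{h}_\lie$-module has two copies of the same simple summand. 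So the strategy is: replay the two change-of-rings spectral sequences of Proposition \ref{Spectral Seq}, using Weyl's and Whitehead's theorems (Theorem \ref{weyl}, Corollary \ref{corol weyl}, Lemma \ref{whitehead}) to kill almost all $E_2$-terms, and only in the exceptional coincidence $\mathfrak{h}_\lie\cong\leib(\mathfrak{h})$ replace the dimension ``$1$'' by ``$2$'' (or ``$4$'' when it appears squared).

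First I would handle the $\Ext^1$ statements. For $\Ext^1_{UL(\mathfrak{h})}(M^s,\mathbb{K})$ and $\Ext^1_{UL(\mathfrak{h})}(\mathbb{K},N^a)$ with $M,N$ among the adjoint-type modules, the relevant $E_2$-terms involve $H^*(\mathfrak{h}_\lie,\Hom(M,-))$ and ingredients like $\HL^*(\mathfrak{h},\mathbb{K})$ and $\Ext^*_{UL(\mathfrak{h})}(U\mathfrak{h}_\lie^s,-)$; the only input that changes is $\Hom_{U\mathfrak{h}_\lie}(\mathfrak{h},N)$ via Corollary \ref{corol schur}, which jumps from dimension $1$ to $2$ exactly when $N\cong\mathfrak{h}_\lie\cong\leib(\mathfrak{h})$. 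Hence these groups become ``$1$- or $2$-dimensional'' as stated. The formula $\Ext^1_{UL(\mathfrak{h})}(M^s,N^a)\cong\Hom_{U\mathfrak{h}_\lie}(M,\widehat N)$ with $\widehat N=\coker(h\colon N\to\Hom(\mathfrak{h},N))$ is literally unchanged — it is an isomorphism of functors that did not use the restriction — so I would simply re-cite it; and ``all other $\Ext^1$ vanish'' follows because the surviving $E_2$-terms are supported only on the listed module types (Whitehead's Lemma \ref{whitehead} kills $H^1$ and $H^2$ with nontrivial coefficients, and Whitehead's Theorem \ref{corol weyl} kills all higher cohomology with nontrivial simple coefficients).

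Next, for $\Ext^2$: in the Mugni\'ery--Wagemann argument the only nonvanishing $\Ext^2$ is $\Ext^2_{UL(\mathfrak{h})}(M^s,N^a)$ with $M\in\{\leib(\mathfrak{h})^\star,\mathfrak{h}_\lie^\star\}$, $N\in\{\leib(\mathfrak{h}),\mathfrak{h}_\lie\}$, and it is computed as a product of two Hom-spaces, each governed by Corollary \ref{corol schur}; generically each factor is $1$-dimensional, giving dimension $1$. When $\mathfrak{h}_\lie\cong\leib(\mathfrak{h})$, the modules $\leib(\mathfrak{h})^\star$ and $\mathfrak{h}_\lie^\star$ coincide (both $\cong\mathfrak{h}_\lie^\star$, using $V^\star\cong V$ as in Remark \ref{dual irred} if $\mathfrak{h}=V_n\times_{hs}\mathfrak{sl}_2$, but in general just that the two labels name the same simple), so the distinct cases collapse to the single case $M\cong\mathfrak{h}_\lie^\star$, $N\cong\mathfrak{h}_\lie$, and \emph{each} of the two Hom-factors becomes $2$-dimensional, yielding $2\times2=4$. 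I would verify that the relevant differentials still vanish (the spectral sequence collapse argument does not use the restriction, only the shape of the surviving columns coming from Lemma \ref{dimension} applied to $\mathfrak{h}_\lie$ plus Whitehead), so $\Ext^2$ is exactly this $E_2$-entry, and all other $\Ext^2$ vanish as before.

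\medskip

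The main obstacle I expect is the bookkeeping in the coincident case $\mathfrak{h}_\lie\cong\leib(\mathfrak{h})$: one must be careful that in Theorem \ref{JM-FW} the two ``independent'' choices $M\in\{\leib(\mathfrak{h})^\star,\mathfrak{h}_\lie^\star\}$ and $N\in\{\leib(\mathfrak{h}),\mathfrak{h}_\lie\}$ are not double-counted when the two options in each brace name the same isomorphism class, and simultaneously that the multiplicity in $\Hom_{U\mathfrak{h}_\lie}(\mathfrak{h},-)$ genuinely doubles because $\mathfrak{h}\cong\mathfrak{h}_\lie\oplus\leib(\mathfrak{h})$ contains two isomorphic simple summands — these two effects must be disentangled to land on ``$4$'' rather than ``$2$'' or ``$1$''. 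Everything else is a re-reading of the cited proof with Corollary \ref{corol schur} in place of its restricted special case.
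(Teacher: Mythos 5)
Your proposal is correct and follows essentially the same route as the paper: isolate the steps of the Mugni\'ery--Wagemann proof where $\leib(\mathfrak{h})\not\cong\mathfrak{h}_{\lie}$ was used (the cases involving $\Ext^{1}(\K,N^{a})$, $\Ext^{1}(M^{s},\K)$ and the $\Ext^{2}$-term), and observe that the only change is the Schur-type count of Corollary \ref{corol schur}, where $\Hom_{U\mathfrak{h}_{\lie}}(\mathfrak{h},N)$ jumps from dimension $1$ to $2$ when $\mathfrak{h}_{\lie}\cong\leib(\mathfrak{h})\cong N$, producing the $1$- or $2$-dimensional $\Ext^{1}$ groups and the $2\times 2=4$-dimensional $\Ext^{2}$ exactly as in the paper's treatment of $\Hom_{U\mathfrak{h}_{\lie}}\bigl(M,\Hom(\mathfrak{h},\Hom_{U\mathfrak{h}_{\lie}}(\mathfrak{h},N))\bigr)$.
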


\begin{proof}
The only cases where the hypothesis $\leib \left( \mathfrak{h} \right) \not \cong \mathfrak{h}_{\lie}$ comes into consideration in the proof of Theorem \ref{JM-FW} were case 3, case 6 and case 8.
 
$\blacktriangleright$ $\Ext_{UL\left( \mathfrak{h} \right) }^{n}\left(\K, N^{a} \right)$ (see case 3 in the proof of Theorem 2.3, in \cite{JM-FW2})

In this case, $\K$ is trivial and $N$ is non-trivial antisymmetric.
We use the first spectral sequence:
$$ E_{2}^{pq}= H^{p} \left(  \mathfrak{h}_{\lie}, \Hom \left( Y, \HL^{q}\left( \mathfrak{h}, X \right)   \right) \right) \Rightarrow \Ext_{UL\left( \mathfrak{h} \right) }^{p+q}  \left( Y^{a}, X \right) $$
We set $Y=\K$ and $X=N^{a}$. Then we have 
$$E_{2}^{pq}= H^{p} \left(  \mathfrak{h}_{\lie}, \Hom \left( \K, \HL^{q}\left( \mathfrak{h}, N^{a} \right)   \right) \right)$$
Furthermore
$$\HL^{q}\left( \mathfrak{h}, N^{a} \right)=
 \begin{cases}
N \quad {\rm if} \quad q=0  \\
\Hom_{U{\mathfrak{h}_{\lie}}} \left( \mathfrak{h}, N \right) \quad {\rm if} \quad q=1\\
0 \quad {\rm if} \quad  q>1 .
\end{cases}$$

Thus $$E_{2}^{pq}= H^{p} \left(  \mathfrak{h}_{\lie}, \Hom \left( \K, \HL^{q}\left( \mathfrak{h}, N^{a} \right)   \right) \right)=
 \begin{cases}
H^{p} \left(  \mathfrak{h}_{\lie}, \Hom_{U{\mathfrak{h}_{\lie}}} \left( \mathfrak{h}, N \right) \right)     \quad {\rm if} \quad q=1\\
0 \quad {\rm otherwise}.
\end{cases}$$
Therefore the spectral sequence collapses and we obtain 
$$\Ext_{UL\left( \mathfrak{h} \right) }^{n}\left(\K, N^{a} \right)= H^{n-1} \left(  \mathfrak{h}_{\lie}, \Hom_{U{\mathfrak{h}_{\lie}}} \left( \mathfrak{h}, N \right)  \right).$$
Using Weyl's theorem (Theorem \ref{weyl}), we deduce
$$\Ext_{UL\left( \mathfrak{h} \right) }^{n}\left(\K, N^{a} \right)= H^{n-1} \left(  \mathfrak{h}_{\lie}, \K \right) \otimes \left( \Hom_{U{\mathfrak{h}_{\lie}}} \left( \mathfrak{h}, N \right)  \right)^{\mathfrak{h}_{\lie}},$$
which implies in turn
$$\Ext_{UL\left( \mathfrak{h} \right) }^{n}\left(\K, N^{a} \right)= H^{n-1} \left(  \mathfrak{h}_{\lie}, \K \right) \otimes \Hom_{{\mathfrak{h}_{\lie}}} \left( \mathfrak{h}, N \right),$$
because $\Hom_{U{\mathfrak{h}_{\lie}}} ( \mathfrak{h}, N )^{\mathfrak{h}_\lie} = \Hom_{U{\mathfrak{h}_{\lie}}} ( \mathfrak{h}, N )$ by definition of the action on the Hom space. 

Applying Corollary \ref{corol schur} to $\Hom_{U{\mathfrak{h}_{\lie}}} \left( \mathfrak{h}, N \right)$, we obtain:

$$\Ext_{UL\left( \mathfrak{h} \right) }^{n}\left(\K, N^{a} \right)=
\begin{cases}
H^{n-1} \left(  \mathfrak{h}_{\lie}, \K \right)\quad {\rm if} \quad \left( N \cong \mathfrak{h}_{\lie} \quad {\rm or} \quad N \cong \leib \left( \mathfrak{h} \right) \right) \quad {\rm and} \quad \leib \left( \mathfrak{h} \right) \not \cong \mathfrak{h}_{\lie} \\
H^{n-1} \left(  \mathfrak{h}_{\lie}, \K \right) \oplus H^{n-1} \left(  \mathfrak{h}_{\lie}, \K \right)\quad {\rm if} \quad N \cong \mathfrak{h}_{\lie} \cong \leib \left( \mathfrak{h} \right)\\
0 \quad {\rm otherwise}.
\end{cases}$$

$\blacktriangleright$ $\Ext_{UL\left( \mathfrak{h} \right) }^{n}\left(M^{s}, \K \right)$
(see case 6 in the proof of Theorem 2.3 in \cite{JM-FW2})

In this case, $\K$ is trivial and $M$ is non-trivial symmetric.
We use the second spectral sequence:
$$E_{2}^{pq}= H^{p} \left(  \mathfrak{h}_{\lie}, \Hom \left( M, \Ext_{UL\left( \mathfrak{h} \right) }^{q}\left( U{\mathfrak{h}_{\lie}}^{s}, K \right)   \right) \right) \Rightarrow \Ext_{UL\left( \mathfrak{h} \right) }^{p+q}  \left( M^{s}, \K \right).$$
We set $Z=M$ and $X=\K$ and obtain for the codomain of the coefficient term
$$\Ext_{UL\left( \mathfrak{h} \right) }^{q}\left( U{\mathfrak{h}_{\lie}}^{s}, \K \right) \simeq  
\begin{cases}
\Hom \left( \mathfrak{h}, \HL^{q-1}\left( \mathfrak{h}, \K \right)   \right) = 0 \quad {\rm for} \quad q > 1\\
\coker(f) = \mathfrak{h}^{\star} \quad {\rm for} \quad q=1\\
\Ker(f) = \K \quad {\rm for} \quad q=0. 
\end{cases}$$
where $f: \K \longrightarrow \Hom \left( \mathfrak{h}, \HL^{0}\left( \mathfrak{h}, \K \right)   \right)= \mathfrak{h}^{\star}$ is given by:
$$f\left(m  \right) \left( x \right)=  x\cdot m +   m\cdot x $$ 
$\forall x \in \mathfrak{h}$ and $\forall m \in \K$. 
We deduce for the $E_2$-term:
$$E_{2}^{pq}= 
\begin{cases}
H^{p} \left(  \mathfrak{h}_{\lie}, \Hom \left( M, \mathfrak{h}^{\star} \right)   \right) \quad {\rm for} \quad q=1\\
0 \quad {\rm otherwise}.
\end{cases}$$
By Weyl's theorem (Theorem \ref{weyl}), therefore
$$\Ext_{UL\left( \mathfrak{h} \right) }^{n}\left( M^{s}, \K \right)= H^{n-1} \left(  \mathfrak{h}_{\lie}, \K \right) \otimes  \Hom_{U{\mathfrak{h}_{\lie}}} \left( M, \mathfrak{h}^{\star} \right).$$ 

Applying Corollary \ref{corol schur} to $\Hom_{U{\mathfrak{h}_{\lie}}} \left( M, \mathfrak{h}^{\star} \right)$, one obtains:
$$\Ext_{UL\left( \mathfrak{h} \right) }^{n}\left( M^{s}, \K \right)=
\begin{cases}
H^{n-1} \left(  \mathfrak{h}_{\lie}, \K \right)\quad {\rm if} \quad \left(  M \cong \mathfrak{h}_{\lie}^{\star} \quad {\rm or} \quad M \cong \leib \left( \mathfrak{h} \right)^{\star}\right) \quad {\rm and} \quad \mathfrak{h}_{\lie}^{\star} \not \cong \leib \left( \mathfrak{h} \right)^{\star}\\
H^{n-1} \left(  \mathfrak{h}_{\lie}, \K \right) \oplus H^{n-1} \left(  \mathfrak{h}_{\lie}, \K \right)\quad  {\rm if} \quad M \cong \mathfrak{h}_{\lie}^{\star} \cong \leib \left( \mathfrak{h} \right)^{\star}\\
0 \quad {\rm otherwise}.
\end{cases}$$

$\blacktriangleright$ $\Ext_{UL\left(\mathfrak{h} \right) }^{1}\left(M^{s}, N^{a} \right)\simeq \Hom_{U \left( \mathfrak{h}_{\lie}\right)}\left(M, \widehat{N} \right)$

In this case, $M$ is non-trivial symmetric and $N$ is non-trivial antisymmetric.
Here the proof given in Theorem 2.3, in \cite{JM-FW2} for $\Ext_{UL\left(\mathfrak{h} \right) }^{1}\left(M^{s}, N^{a} \right)$ still goes through for $\Ext^1$.

$\blacktriangleright$ Concerning the $\Ext^2$, we have for the $E_2$-term of the spectral sequence as in case 8 in the proof given in Theorem 2.3, in \cite{JM-FW2}
$$E^{p2}_2=H^p({\mathfrak h}_\lie,\K)\otimes\Hom_{U{\mathfrak h}_\lie}(M,\Hom({\mathfrak h},\Hom_{U{\mathfrak h}_\lie}({\mathfrak h},N))).$$

Now, we have 
$$\Hom_{U{\mathfrak h}_\lie}({\mathfrak h},N)= 
\begin{cases}
\K \quad {\rm if} \quad N\cong{\mathfrak h}_\lie \quad {\rm and } \quad {\mathfrak h}_\lie\not\cong \leib({\mathfrak h}) \\
\K \quad {\rm if} \quad N\cong\leib({\mathfrak h}) \quad {\rm and } \quad {\mathfrak h}_\lie\not\cong \leib({\mathfrak h}) \\
\K\oplus\K \quad {\rm if} \quad  N\cong {\mathfrak h}_\lie\cong \leib({\mathfrak h}) \\
0 \quad {\rm otherwise}.
\end{cases}$$
Thus the space $\Hom_{U{\mathfrak h}_\lie}(M,\Hom({\mathfrak h},\Hom_{U{\mathfrak h}_\lie}({\mathfrak h},N)))$ becomes $\Hom_{U{\mathfrak h}_\lie}(M,{\mathfrak h}^\star)$, $\Hom_{U{\mathfrak h}_\lie}(M,{\mathfrak h}^\star)$, $\Hom_{U{\mathfrak h}_\lie}(M,{\mathfrak h}^\star\oplus{\mathfrak h}^\star )$ or zero according to the four cases. If $M$ is isomorphic to ${\mathfrak h}_\lie^\star$ or $\leib({\mathfrak h})^\star$ in one of the first two cases, this gives an Ext of dimension $1$. But in the third case, this gives an Ext of dimension four if $M\cong{\mathfrak h}_\lie^\star\cong\leib({\mathfrak h})^\star$ !
\end{proof}

\subsection{Computation of Ext-groups for $\mathfrak{h}=V_{n} \times_{hs} \mathfrak{sl}_{2}(\C)$}
Let us now specialize to the Lie algebra $\mathfrak{sl}_{2}(\C)$.
Denote
$$ \mathfrak{h}=V_{n}\times_{hs} \mathfrak{sl}_{2}(\C),\quad M=V_{p},\quad{\rm and}\quad N=V_{m}.$$
where $V_{n}$, $V_{p}$, $V_{m}$ are the simple $\mathfrak{sl}_{2}$-modules of dimension $n+1$, $p+1$ and $m+1$ respectively, and $\mathfrak{h}=V_{n}\times_{hs} \mathfrak{sl}_{2}(\C)$ is the hemi-semidirect product Leibniz algebra. We will need the following lemma:

\begin{lem}\label{clebs}[Clebsch-Gordon] 
Let $V_{m}$ and $V_{n}$ be the simple $\mathfrak{sl}_{2}(\C)$-modules of dimension $m+1$ and $n+1$. Then
$$V_{m}\otimes V_{n}= V_{m+n} \oplus V_{m+n-2} \oplus\ldots\oplus V_{\mid m-n \mid}.$$
\end{lem}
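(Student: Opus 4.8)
The plan is to prove this via formal characters and weight-space decomposition, relying on complete reducibility of finite-dimensional $\mathfrak{sl}_2(\C)$-representations (see Chapter VII in \cite{STM}). Recall that $V_n$ splits under the standard Cartan element $h$ into one-dimensional weight spaces of weights $n, n-2, \dots, -n$, so its formal character is
$$\chi_n(q) \;=\; q^n + q^{n-2} + \dots + q^{-n} \;=\; \frac{q^{n+1}-q^{-n-1}}{q-q^{-1}} \;\in\; \mathbb{Z}[q,q^{-1}].$$
Since each $\chi_k$ has top term $q^k$ with coefficient $1$ and involves only exponents congruent to $k$ modulo $2$, the family $\{\chi_k\}_{k\ge 0}$ is triangular with respect to the top degree, hence $\mathbb{Z}$-linearly independent; consequently two semisimple finite-dimensional modules with equal characters are isomorphic. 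As $h$ acts on $v\otimes w$ by $(h\cdot v)\otimes w + v\otimes(h\cdot w)$, the character of a tensor product is the product of the characters, and $V_m\otimes V_n$ is semisimple by complete reducibility. So it suffices to establish the character identity $\chi_m\chi_n = \chi_{m+n} + \chi_{m+n-2} + \dots + \chi_{|m-n|}$.

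To check this, assume without loss of generality $m\ge n$ (the tensor product is symmetric in $V_m$ and $V_n$, and then $m-n=|m-n|$). Multiply both sides by $q-q^{-1}$. Using $(q-q^{-1})\chi_n = q^{n+1}-q^{-n-1}$, the left-hand side becomes $q^{n+1}\chi_m - q^{-n-1}\chi_m$, which expands to the explicit Laurent polynomial
$$\big(q^{m+n+1}+q^{m+n-1}+\dots+q^{n-m+1}\big) \;-\; \big(q^{m-n-1}+q^{m-n-3}+\dots+q^{-m-n-1}\big),$$
while the right-hand side becomes $\sum_{j=0}^{n}\big(q^{\,m+n+1-2j}-q^{-(m+n+1-2j)}\big)$. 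Now compare exponents: the block $q^{m+n+1},\dots,q^{m-n+1}$ inside the first bracket on the left is exactly the positive part on the right; the remaining terms $q^{m-n-1},\dots,q^{n-m+1}$ of the first bracket cancel against the initial terms of the second bracket; and what survives of the second bracket is $q^{n-m-1}+\dots+q^{-m-n-1} = q^{-(m-n+1)}+\dots+q^{-(m+n+1)}$, which, with its minus sign, is precisely the negative part on the right. Hence both sides agree.

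Combining the two steps yields $\chi_m\chi_n = \chi_{m+n}+\chi_{m+n-2}+\dots+\chi_{|m-n|}$, a sum of pairwise distinct simple characters each with multiplicity one; since characters determine the semisimple module, $V_m\otimes V_n \cong V_{m+n}\oplus V_{m+n-2}\oplus\dots\oplus V_{|m-n|}$. The only genuinely computational point is the exponent bookkeeping in the middle paragraph; an alternative route that sidesteps it is to produce, for each $k\in\{|m-n|,|m-n|+2,\dots,m+n\}$, a highest-weight vector of weight $k$ in $V_m\otimes V_n$ by solving the equation "$e$ annihilates it" in the relevant weight space, and then to check $\sum_k (k+1) = (m+1)(n+1)$ so that the submodules they generate exhaust $V_m\otimes V_n$. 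I would present the character argument as the main proof.
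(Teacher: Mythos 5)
Your proof is correct. The paper states this Clebsch--Gordan decomposition as a classical fact without giving any proof (it is standard material, found e.g.\ in Humphreys), so there is no argument in the paper to compare against; your character-theoretic derivation is a complete and self-contained justification. The three ingredients are all in order: complete reducibility of finite-dimensional $\mathfrak{sl}_2(\C)$-modules guarantees that $V_m\otimes V_n$ is determined by its character; the characters $\chi_k$ are $\mathbb{Z}$-linearly independent by triangularity in the top exponent; and the identity $(q-q^{-1})\chi_m\chi_n=q^{n+1}\chi_m-q^{-n-1}\chi_m=\sum_{j=0}^{n}\bigl(q^{m+n+1-2j}-q^{-(m+n+1-2j)}\bigr)$ checks out exactly as you describe (for $m\ge n$ the block of exponents from $m+n+1$ down to $m-n+1$ gives the positive part, the symmetric middle block cancels, and the tail gives the negative part). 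The alternative you sketch --- exhibiting a highest-weight vector of each weight $k\in\{|m-n|,|m-n|+2,\dots,m+n\}$ and counting dimensions --- is the other standard route and would serve equally well; either one could be inserted here as a proof of the lemma.
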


Let us compute  $\Ext_{UL\left( \mathfrak{h} \right) }^{1}\left(M^{s}, N^{a} \right)$ for ${\mathfrak h}=V_{n}\times_{hs} \mathfrak{sl}_{2}(\C)$, $M=V_p$ and $N=V_m$ by applying Theorem \ref{complement}. We obtain
$$Ext_{UL\left( \mathfrak{h} \right) }^{1}\left(M^{s}, N^{a} \right)\simeq \Hom_{U \mathfrak{sl}_{2}(\C)}\left(M, \widehat{N} \right),$$ 
where $M$ and $N$ are simple $\mathfrak{sl}_{2}(\C)$-modules and
\begin{align*}
\widehat{N} &=& \coker \left( f: N \longrightarrow \Hom\left(V_{n} \oplus \mathfrak{sl}_{2}(\C), N \right) \right) \quad f\left( n \right)\left( x \right)=  x\cdot n \\
&=&\coker \left( f: V_{m} \longrightarrow \Hom\left(V_{n} \oplus V_{2}, V_{m} \right) \right) \quad f\left( n \right)\left( x \right)=  x\cdot n.
\end{align*}

In order to compute $\widehat{N}= \coker \left( f: V_{m} \longrightarrow \Hom\left(V_{n} \oplus V_{2}, V_{m} \right) \right)$, let us first compute ${\rm Im}(f)$ and $\Hom\left(V_{n} \oplus V_{2}, V_{m} \right)$:

$\bullet$ \textbf{Computation of ${\rm Im}(f)$:}

\begin{itemize}
\item[(1)] $m\neq 0$, i.e. $N=V_{m}$ is a non-trivial simple $\mathfrak{sl}_{2}(\C)$-module. Then the image ${\rm Im}(f)$ must be a non-trivial submodule of $\Hom\left(V_{n} \oplus V_{2}, V_{m} \right) $ isomorphic to $V_m$.  
\item[(2)] $m = 0$, i.e. $N=V_{0}$ is trivial and the image is zero. 
\end{itemize}

$\bullet$ \textbf{Computation of $\Hom\left(V_{n} \oplus V_{2}, V_{m} \right)$:}

$$\Hom\left(V_{n} \oplus V_{2}, V_{m} \right)\cong \left(V_{n}^{\star} \oplus V_{2}^{\star}\right) \otimes V_{m} \cong \left(V_{n} \oplus V_{2}\right) \otimes V_{m}= \left(V_{n}\otimes V_{m} \right)\oplus \left(V_{2}\otimes V_{m} \right).$$ 

By Lemma $\ref{clebs}$ (Clebsch-Gordon), one has:
\begin{itemize}
\item $V_{n}\otimes V_{m}= V_{m+n} \oplus V_{m+n-2} \oplus\ldots\oplus V_{\mid m-n \mid}$\\
\item $V_{2}\otimes V_{m}= V_{m+2} \oplus V_{m} \oplus V_{ m-2}$ \quad if $m\geq2$\\
\item $V_{2}\otimes V_{m}= V_{3} \oplus V_{1}$ \quad if $m=1$\\
\item $V_{2}\otimes V_{m}= V_{2} $ \quad if $m=0$\\
\end{itemize}
Thus
$$\Hom\left(V_{n} \oplus V_{2}, V_{m} \right)=
\begin{cases}
V_{m+n} \oplus V_{m+n-2} \oplus\ldots\oplus V_{\mid m-n \mid} \oplus V_{m+2} \oplus V_{m} \oplus V_{ m-2 } \quad {\rm if} \quad m\geq2\\
 V_{n+1} \oplus V_{n-1} \oplus V_{3} \oplus V_{1} \quad {\rm if} \quad m=1\\
 V_{n} \oplus V_{2} \quad {\rm if} \quad m=0.
\end{cases}$$

$\bullet$ \textbf{Computation of $\widehat{N}= \coker(f)$ }

Since $\widehat{N}= \coker(f) = \Hom\left(V_{n} \oplus V_{2}, V_{m} \right)/ {\rm Im}(f)$, we have

$$\widehat{N}=
\begin{cases}
V_{m+n} \oplus V_{m+n-2} \oplus\ldots\oplus V_{\mid m-n \mid} \oplus V_{m+2} \oplus V_{m-2} \quad {\rm if} \quad m\geq2\\
 V_{n+1} \oplus V_{n-1} \oplus V_{3} \quad {\rm if} \quad m=1\\
 V_{n} \oplus V_{2} \quad {\rm if} \quad m=0.
\end{cases}$$

$\bullet$ \textbf{Computation of $\dim\Ext_{UL\left( \mathfrak{h} \right) }^{1}\left(M^{s}, N^{a} \right)$}

 \begin{theo} \label{quiver hemi}
Let $\mathfrak{h}=V_{n}\times_{hs}\mathfrak{sl}_{2}(\C)$ be the simple Leibniz algebra. Let $M=V_{p}$ and $N=V_{m}$ be the simple $\mathfrak{h}$-bimodules. Then

\begin{itemize}
\item[$\blacktriangleright$] For $m\geqslant2$
$$\dim\Ext_{UL\left( \mathfrak{h} \right) }^{1}\left(V_{p}^{s}, V_{m}^{a} \right)=
\begin{cases}
2 \quad {\rm if} \quad p \in \lbrace m+n, m+n-2,\ldots, \mid m-n \mid  \rbrace \quad {\rm and} \quad p \in \lbrace m+2, m-2  \rbrace\\
1 \quad {\rm if} \quad p \in \lbrace m+n, m+n-2,\ldots, \mid m-n \mid  \rbrace \quad {\rm and} \quad p \notin \lbrace m+2, m-2  \rbrace\\
1 \quad {\rm if} \quad p \in \lbrace m+2, m-2  \rbrace \quad {\rm and} \quad  p \notin \lbrace m+n, m+n-2,\ldots, \mid m-n \mid  \rbrace \\
0 \quad {\rm otherwise}.
\end{cases}$$
\item[$\blacktriangleright$] For $m = 1$
$$\dim\Ext_{UL\left( \mathfrak{h} \right) }^{1}\left(V_{p}^{s}, V_{m}^{a} \right)=
\begin{cases}
2 \quad {\rm if} \quad p \in \lbrace n+1, n-1, 3 \rbrace \quad {\rm and} \quad \lbrace p \rbrace \cap \lbrace 3 \rbrace \cap \lbrace n+1, n-1  \rbrace \neq \emptyset\\
1 \quad {\rm if} \quad p \in \lbrace n+1, n-1, 3 \rbrace \quad {\rm and} \quad \lbrace p \rbrace \cap \lbrace 3 \rbrace \cap \lbrace n+1, n-1  \rbrace = \emptyset\\
0 \quad {\rm if} \quad p \notin \lbrace n+1, n-1, 3 \rbrace .
\end{cases}$$
\item[$\blacktriangleright$] For $m = 0$
$$\dim\Ext_{UL\left( \mathfrak{h} \right) }^{1}\left(V_{p}^{s}, V_{0} \right)=
\begin{cases}
2 \quad {\rm if} \quad p =n=2 \\
1 \quad {\rm if} \quad p \in \lbrace n, 2 \rbrace \quad {\rm and} \quad n\neq2 \\
0 \quad {\rm if} \quad p \notin \lbrace n, 2 \rbrace .
\end{cases}$$
\end{itemize}
\end{theo}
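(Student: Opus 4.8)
The plan is to convert the computation into a multiplicity count inside the direct-sum decomposition of $\widehat{V_m}$ that has already been obtained. By Theorem \ref{complement} (as spelled out in the paragraphs preceding the statement) we have
$$\Ext_{UL(\mathfrak{h})}^{1}(V_p^{s}, V_m^{a})\simeq \Hom_{U\mathfrak{sl}_2(\C)}(V_p, \widehat{V_m}),$$
where $\widehat{V_m}=\coker\left(f\colon V_m\to\Hom(V_n\oplus V_2,V_m)\right)$ has been written out explicitly as a direct sum of simple $\mathfrak{sl}_2(\C)$-modules in the three cases $m\geq 2$, $m=1$, $m=0$. So all that is left is to read off the multiplicity of $V_p$ as a summand of $\widehat{V_m}$.

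First I would invoke Schur's Lemma (Lemma \ref{schur}) together with additivity of $\Hom$: since $V_p$ and $V_k$ are the unique simple $\mathfrak{sl}_2(\C)$-modules of their respective dimensions, $\dim\Hom_{U\mathfrak{sl}_2(\C)}(V_p,V_k)$ equals $1$ if $p=k$ and $0$ otherwise, whence $\dim\Ext_{UL(\mathfrak{h})}^{1}(V_p^{s},V_m^{a})$ is exactly the multiplicity $[\widehat{V_m}:V_p]$. Next I would split $\widehat{V_m}$ into a \emph{Clebsch-Gordon piece} $V_n\otimes V_m=V_{m+n}\oplus V_{m+n-2}\oplus\cdots\oplus V_{|m-n|}$ and a \emph{complementary piece} $\Hom(V_2,V_m)/\im f$, the latter being $V_{m+2}\oplus V_{m-2}$ when $m\geq 2$, $V_3$ when $m=1$, and $V_2$ when $m=0$. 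By Lemma \ref{clebs} the Clebsch-Gordon piece is multiplicity-free, and every complementary piece visibly has all multiplicities at most one; therefore $[\widehat{V_m}:V_p]\in\{0,1,2\}$, being $2$ exactly when $V_p$ occurs in both pieces, $1$ when it occurs in exactly one, and $0$ otherwise.

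It then remains to translate the two occurrence conditions into the index conditions of the statement: $V_p$ occurs in the Clebsch-Gordon piece iff $p\in\{m+n,m+n-2,\ldots,|m-n|\}$, and $V_p$ occurs in the complementary piece iff $p\in\{m+2,m-2\}$ (for $m\geq 2$), $p=3$ (for $m=1$), $p=2$ (for $m=0$). Feeding these into the trichotomy above yields the first table at once; for $m=1$ the Clebsch-Gordon set is $\{n+1,n-1\}$, so a multiplicity of $2$ forces $p=3$ and simultaneously $3\in\{n+1,n-1\}$, i.e. $\{p\}\cap\{3\}\cap\{n+1,n-1\}\neq\emptyset$; and for $m=0$ a multiplicity of $2$ forces $p=n$ and $p=2$, i.e. $p=n=2$. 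This reproduces the three tables verbatim.

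I do not expect a genuine obstacle: the conceptual work — the $\Ext$–$\Hom$ identification and the explicit form of $\widehat{V_m}$ — is already done, so what is left is bookkeeping. The one point that truly needs care is the bound $[\widehat{V_m}:V_p]\leq 2$, which rests on the multiplicity-freeness of the $\mathfrak{sl}_2(\C)$ Clebsch-Gordon rule (Lemma \ref{clebs}), and the honest treatment of the degenerate values $m=0,1$, where $V_2\otimes V_m$ has fewer than three constituents and the generic $m\geq 2$ formula must not be applied blindly.
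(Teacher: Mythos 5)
Your proposal is correct and takes essentially the same route as the paper: both identify $\Ext^{1}_{UL(\mathfrak{h})}(V_p^{s},V_m^{a})$ with $\Hom_{U\mathfrak{sl}_2(\C)}(V_p,\widehat{V_m})$ via Theorem \ref{complement}, insert the explicit Clebsch--Gordon decomposition of $\widehat{V_m}$ already computed in the preceding paragraphs, and read off the dimension by Schur's Lemma. Your extra remark that the multiplicity of $V_p$ is at most $2$ because each of the two pieces (the multiplicity-free $V_n\otimes V_m$ part and the residual $V_2\otimes V_m/\im f$ part) contributes at most once is exactly the bookkeeping the paper leaves implicit, and your treatment of the degenerate cases $m=0,1$ matches the stated tables.
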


\begin{proof}
For $m\geqslant2$
$$\dim\Ext_{UL\left( \mathfrak{h} \right) }^{1}\left(M^{s}, N^{a} \right)=\dim\Ext_{UL\left( \mathfrak{h} \right) }^{1}\left(V_{p}^{s}, V_{m}^{a} \right)=\dim\Hom_{U{\mathfrak{sl}_{2}}(\C) } \left(V_{p}, \widehat{V_{m}} \right)$$
and
$$\dim\Ext_{UL\left( \mathfrak{h} \right) }^{1}\left(V_{p}^{s}, V_{m}^{a} \right)=\dim\Hom_{U{\mathfrak{sl}_{2}}(\C) } \left(V_{p}, V_{m+n} \oplus V_{m+n-2} \oplus\ldots\oplus V_{\mid m-n \mid} \oplus V_{m+2} \oplus V_{m-2} \right).$$

Applying Schur's lemma (Lemma $\ref{schur}$) to $\Hom_{U{\mathfrak{sl}_{2}}(\C) } \left(V_{p}, V_{m+n} \oplus V_{m+n-2} \oplus\ldots\oplus V_{\mid m-n \mid} \oplus V_{m+2} \oplus V_{m-2} \right)$, the result follows.

For $m=1$
$$\dim\Ext_{UL\left( \mathfrak{h} \right) }^{1}\left(M^{s}, N^{a} \right)=\dim\Ext_{UL\left( \mathfrak{h} \right) }^{1}\left(V_{p}^{s}, V_{m}^{a} \right)=\dim\Hom_{U{\mathfrak{sl}_{2}} (\C)} \left(V_{p}, \widehat{V_{m}} \right)$$
and
$$\dim\Ext_{UL\left( \mathfrak{h} \right) }^{1}\left(V_{p}^{s}, V_{m}^{a} \right)=\dim\Hom_{U{\mathfrak{sl}_{2}}(\C) } \left(V_{p}, V_{n+1} \oplus V_{n-1} \oplus V_{3} \right).$$

Applying Schur's lemma (Lemma $\ref{schur}$) to $\Hom_{U{\mathfrak{sl}_{2}} (\C)} \left(V_{p}, V_{n+1} \oplus V_{n-1} \oplus V_{3} \right)$, the result follows.

For $m=0$
$$\dim\Ext_{UL\left( \mathfrak{h} \right) }^{1}\left(V_{p}^{s}, V_{0} \right)=\dim\Hom_{U{\mathfrak{sl}_{2}}(\C) } \left(V_{p}, \widehat{V_{0}} \right)$$
and
$$\dim\Ext_{UL\left( \mathfrak{h} \right) }^{1}\left(V_{p}^{s}, V_{0} \right)=\dim\Hom_{U{\mathfrak{sl}_{2}}(\C) } \left(V_{p}, V_{n} \oplus V_{2} \right).$$

Applying Schur's lemma (Lemma $\ref{schur}$) to $\Hom_{U{\mathfrak{sl}_{2}}(\C) } \left(V_{p}, V_{n} \oplus V_{2} \right)$, the result follows. 
\end{proof}

\subsection{Construction of the quiver for $ \mathfrak{h}=V_{n}\times_{hs} \mathfrak{sl}_{2}(\C)$}

In this subsection, we will compute the Gabriel quiver of $L\left( \mathfrak{h} \right)$, with $\mathfrak{h}=V_{n}\times_{hs} \mathfrak{sl}_{2}(\C)$ for $n = 1$ and $n = 2$.

\textbf{Case1: $n=1$} and thus $\mathfrak{h}=V_{1}\times_{hs} \mathfrak{sl}_{2}(\C)$.
In this case, we have
$$\widehat{N}=
\begin{cases}
V_{m+1} \oplus V_{m-1} \oplus V_{m+2} \oplus V_{m-2} \quad {\rm if} \quad m\geq2\\
 V_{2} \oplus V_{0} \oplus V_{3} \quad {\rm if} \quad m=1\\
 V_{1} \oplus V_{2} \quad {\rm if} \quad m=0.
\end{cases}$$
Applying Theorem $\ref{quiver hemi}$, one has
$$\dim\Ext_{UL\left( \mathfrak{h} \right) }^{1}\left(V_{p}^{s}, V_{m}^{a} \right)=
\begin{cases}
1 \quad {\rm if} \quad p \in \lbrace m+1, m-1, m+2, m-2 \rbrace \quad {\rm and} \quad m \geq 2\\
1 \quad {\rm if} \quad p \in \lbrace 0, 2, 3 \rbrace \quad {\rm and} \quad m=1\\
1 \quad {\rm if} \quad p \in \lbrace 1, 2 \rbrace \quad {\rm and} \quad m=0\\
0 \quad {\rm otherwise}.
\end{cases}$$
Concerning the number of arrows, this means that 
$V_{0}$ receives two arrows: one from $V_{1}^{s}$ and one from $V_{2}^{s}$,
$V_{1}^{a}$ receives three arrows: one from $V_{0}^{s}$, one from $V_{2}^{s}$ and one from $V_{3}^{s}$, and
for $m\geq2$, $V_{m}^{a}$ receives four arrows: one from $V_{m+1}^{s}$, one from $V_{m-1}^{s}$, one from $V_{m+2}^{s}$ and one from $V_{m-2}^{s}$.

The quiver for $L\left( \mathfrak{h}=V_{1}\times_{hs} \mathfrak{sl}_{2}(\C)\right)$ looks as follows:

\begin{figure}[H]
	\centering
	\input{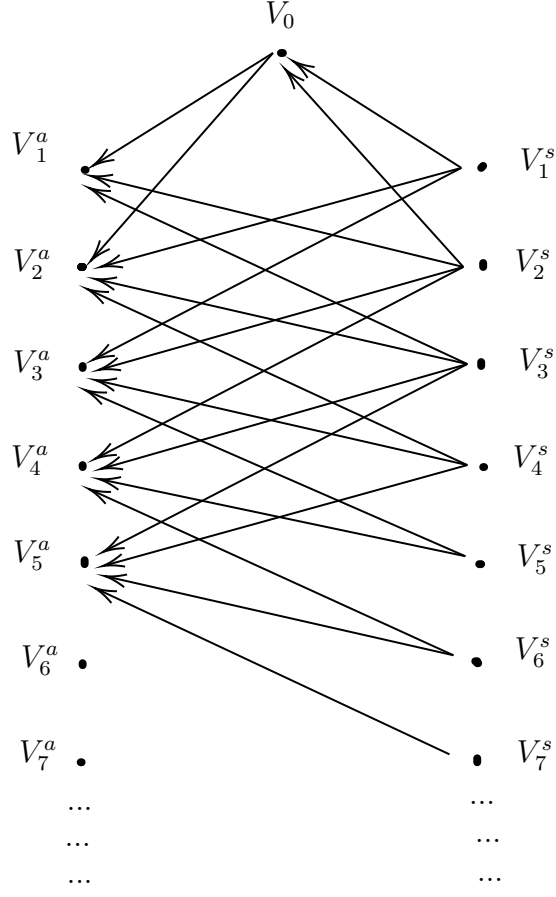}
	\caption{Gabriel quiver when $\mathfrak{h}= V_{1}\times_{hs} \mathfrak{sl}_{2}(\C)$}
\end{figure}

\textbf{Case 2: $n=2$} and thus $\mathfrak{h}= V_{2}\times_{hs} \mathfrak{sl}_{2}(\C)$.
In this case, we have
$$\widehat{N}=
\begin{cases}
V_{m+2} \oplus V_{m} \oplus V_{m-2} \oplus V_{m+2} \oplus V_{m-2} \quad {\rm if} \quad m\geq2\\
 V_{3} \oplus V_{1} \oplus V_{3} \quad {\rm if} \quad m=1\\
 V_{2} \oplus V_{2} \quad{\rm if} \quad m=0.
\end{cases}$$
Applying Theorem $\ref{quiver hemi}$, one has
$$\dim\Ext_{UL\left( \mathfrak{h} \right) }^{1}\left(V_{p}^{s}, V_{m}^{a} \right)=
\begin{cases}
2 \quad {\rm if} \quad p \in \lbrace m+2, m-2 \rbrace \quad {\rm and} \quad m \geq 2\\
2 \quad {\rm if} \quad p=3 \quad {\rm and} \quad m =1\\
2 \quad {\rm if} \quad p=2 \quad {\rm and} \quad m =0\\
1 \quad {\rm if} \quad p=m \quad {\rm and} \quad m \geq 2\\
1 \quad {\rm if} \quad p=1 \quad {\rm and} \quad m =1\\
0 \quad {\rm otherwise}.
\end{cases}$$
Concerning the number of arrows, this means that 
$V_{0}$ receives two arrows from $V_{2}^{s}$,
$V_{1}^{a}$ receives two arrows from $V_{3}^{s}$, one arrow from $V_{1}^{s}$, and 
for $m\geq2$, $V_{m}^{a}$ receives two arrows from $V_{m+2}^{s}$, two arrows from $V_{m-2}^{s}$ but one arrow from $V_{m}^{s}$.

The quiver for $\mathfrak{h}= V_{2}\times_{hs} \mathfrak{sl}_{2}(\C)$ looks as follows:

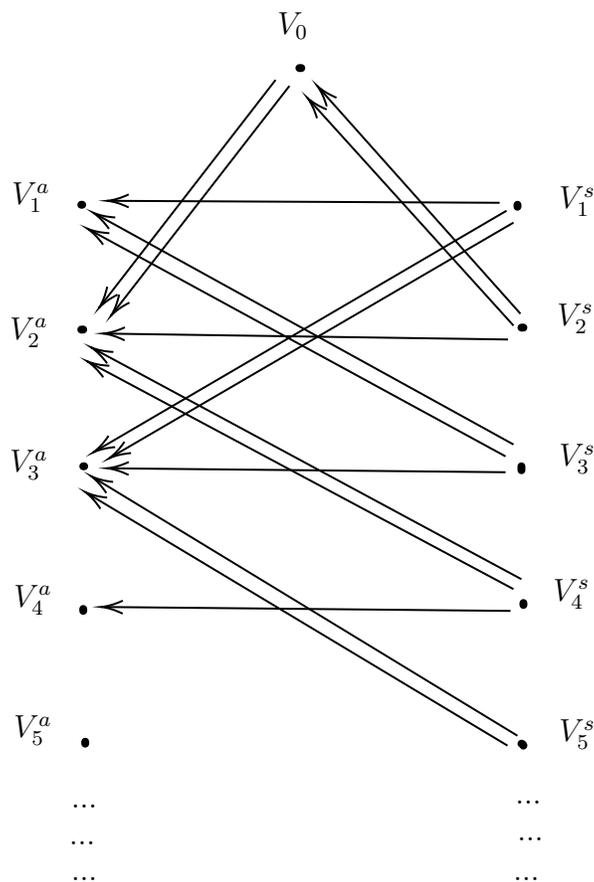
\begin{figure}[H]
	\centering
	\tikzset{every picture/.style={line width=0.75pt}} 

\begin{tikzpicture}[x=0.75pt,y=0.75pt,yscale=-1,xscale=1]

\draw    (416,169) -- (314.83,55.49) ;
\draw [shift={(313.5,54)}, rotate = 48.29] [color={rgb, 255:red, 0; green, 0; blue, 0 }  ][line width=0.75]    (10.93,-3.29) .. controls (6.95,-1.4) and (3.31,-0.3) .. (0,0) .. controls (3.31,0.3) and (6.95,1.4) .. (10.93,3.29)   ;
\draw    (412.5,175) -- (309.84,61.48) ;
\draw [shift={(308.5,60)}, rotate = 47.88] [color={rgb, 255:red, 0; green, 0; blue, 0 }  ][line width=0.75]    (10.93,-3.29) .. controls (6.95,-1.4) and (3.31,-0.3) .. (0,0) .. controls (3.31,0.3) and (6.95,1.4) .. (10.93,3.29)   ;
\draw    (412.5,235) -- (202.25,118.97) ;
\draw [shift={(200.5,118)}, rotate = 28.89] [color={rgb, 255:red, 0; green, 0; blue, 0 }  ][line width=0.75]    (10.93,-3.29) .. controls (6.95,-1.4) and (3.31,-0.3) .. (0,0) .. controls (3.31,0.3) and (6.95,1.4) .. (10.93,3.29)   ;
\draw    (408.5,241) -- (199.26,126.96) ;
\draw [shift={(197.5,126)}, rotate = 28.59] [color={rgb, 255:red, 0; green, 0; blue, 0 }  ][line width=0.75]    (10.93,-3.29) .. controls (6.95,-1.4) and (3.31,-0.3) .. (0,0) .. controls (3.31,0.3) and (6.95,1.4) .. (10.93,3.29)   ;
\draw    (409.75,182) -- (207.5,179.03) ;
\draw [shift={(205.5,179)}, rotate = 0.84] [color={rgb, 255:red, 0; green, 0; blue, 0 }  ][line width=0.75]    (10.93,-3.29) .. controls (6.95,-1.4) and (3.31,-0.3) .. (0,0) .. controls (3.31,0.3) and (6.95,1.4) .. (10.93,3.29)   ;
\draw    (408.5,249) -- (209.5,247.02) ;
\draw [shift={(207.5,247)}, rotate = 0.57] [color={rgb, 255:red, 0; green, 0; blue, 0 }  ][line width=0.75]    (10.93,-3.29) .. controls (6.95,-1.4) and (3.31,-0.3) .. (0,0) .. controls (3.31,0.3) and (6.95,1.4) .. (10.93,3.29)   ;
\draw    (416.5,303) -- (201.26,186.95) ;
\draw [shift={(199.5,186)}, rotate = 28.33] [color={rgb, 255:red, 0; green, 0; blue, 0 }  ][line width=0.75]    (10.93,-3.29) .. controls (6.95,-1.4) and (3.31,-0.3) .. (0,0) .. controls (3.31,0.3) and (6.95,1.4) .. (10.93,3.29)   ;
\draw    (412.5,308) -- (199.26,193.94) ;
\draw [shift={(197.5,193)}, rotate = 28.14] [color={rgb, 255:red, 0; green, 0; blue, 0 }  ][line width=0.75]    (10.93,-3.29) .. controls (6.95,-1.4) and (3.31,-0.3) .. (0,0) .. controls (3.31,0.3) and (6.95,1.4) .. (10.93,3.29)   ;
\draw    (414.5,382) -- (201.21,252.04) ;
\draw [shift={(199.5,251)}, rotate = 31.35] [color={rgb, 255:red, 0; green, 0; blue, 0 }  ][line width=0.75]    (10.93,-3.29) .. controls (6.95,-1.4) and (3.31,-0.3) .. (0,0) .. controls (3.31,0.3) and (6.95,1.4) .. (10.93,3.29)   ;
\draw    (409.5,387) -- (198.21,260.03) ;
\draw [shift={(196.5,259)}, rotate = 31] [color={rgb, 255:red, 0; green, 0; blue, 0 }  ][line width=0.75]    (10.93,-3.29) .. controls (6.95,-1.4) and (3.31,-0.3) .. (0,0) .. controls (3.31,0.3) and (6.95,1.4) .. (10.93,3.29)   ;
\draw    (411,319) -- (206.5,317.02) ;
\draw [shift={(204.5,317)}, rotate = 0.55] [color={rgb, 255:red, 0; green, 0; blue, 0 }  ][line width=0.75]    (10.93,-3.29) .. controls (6.95,-1.4) and (3.31,-0.3) .. (0,0) .. controls (3.31,0.3) and (6.95,1.4) .. (10.93,3.29)   ;
\draw    (292.5,51) -- (204.72,164.42) ;
\draw [shift={(203.5,166)}, rotate = 307.74] [color={rgb, 255:red, 0; green, 0; blue, 0 }  ][line width=0.75]    (10.93,-3.29) .. controls (6.95,-1.4) and (3.31,-0.3) .. (0,0) .. controls (3.31,0.3) and (6.95,1.4) .. (10.93,3.29)   ;
\draw    (299.5,54) -- (210.73,168.42) ;
\draw [shift={(209.5,170)}, rotate = 307.81] [color={rgb, 255:red, 0; green, 0; blue, 0 }  ][line width=0.75]    (10.93,-3.29) .. controls (6.95,-1.4) and (3.31,-0.3) .. (0,0) .. controls (3.31,0.3) and (6.95,1.4) .. (10.93,3.29)   ;
\draw  [line width=3] [line join = round][line cap = round] (194.5,114) .. controls (194.5,114) and (194.5,114) .. (194.5,114) ;
\draw  [line width=3] [line join = round][line cap = round] (414.5,114) .. controls (414.5,114.33) and (414.5,114.67) .. (414.5,115) ;
\draw  [line width=3] [line join = round][line cap = round] (195.5,177) .. controls (195.17,177) and (194.83,177) .. (194.5,177) ;
\draw  [line width=3] [line join = round][line cap = round] (305.5,45) .. controls (305.17,45) and (304.83,45) .. (304.5,45) ;
\draw  [line width=3] [line join = round][line cap = round] (417.5,176) .. controls (417.17,176) and (416.83,176) .. (416.5,176) ;
\draw  [line width=3] [line join = round][line cap = round] (195.5,246) .. controls (195.5,246) and (195.5,246) .. (195.5,246) ;
\draw  [line width=3] [line join = round][line cap = round] (416.5,248) .. controls (416.5,247.33) and (416.5,246.67) .. (416.5,246) ;
\draw  [line width=3] [line join = round][line cap = round] (196.5,385) .. controls (196.5,385.33) and (196.5,385.67) .. (196.5,386) ;
\draw  [line width=3] [line join = round][line cap = round] (416.5,386) .. controls (416.97,386) and (417.5,386.53) .. (417.5,387) ;
\draw  [line width=3] [line join = round][line cap = round] (195.5,319) .. controls (195.5,318.67) and (195.5,318.33) .. (195.5,318) ;
\draw  [line width=3] [line join = round][line cap = round] (417.5,316) .. controls (417.5,315.67) and (417.5,315.33) .. (417.5,315) ;
\draw    (409,117) -- (202.23,237.99) ;
\draw [shift={(200.5,239)}, rotate = 329.67] [color={rgb, 255:red, 0; green, 0; blue, 0 }  ][line width=0.75]    (10.93,-3.29) .. controls (6.95,-1.4) and (3.31,-0.3) .. (0,0) .. controls (3.31,0.3) and (6.95,1.4) .. (10.93,3.29)   ;
\draw    (412.5,123) -- (210.23,240.99) ;
\draw [shift={(208.5,242)}, rotate = 329.74] [color={rgb, 255:red, 0; green, 0; blue, 0 }  ][line width=0.75]    (10.93,-3.29) .. controls (6.95,-1.4) and (3.31,-0.3) .. (0,0) .. controls (3.31,0.3) and (6.95,1.4) .. (10.93,3.29)   ;
\draw    (405.5,113) -- (209.5,112.01) ;
\draw [shift={(207.5,112)}, rotate = 0.29] [color={rgb, 255:red, 0; green, 0; blue, 0 }  ][line width=0.75]    (10.93,-3.29) .. controls (6.95,-1.4) and (3.31,-0.3) .. (0,0) .. controls (3.31,0.3) and (6.95,1.4) .. (10.93,3.29)   ;

\draw (292,15.4) node [anchor=north west][inner sep=0.75pt]    {$V_{0}$};
\draw (434,103.4) node [anchor=north west][inner sep=0.75pt]    {$V_{1}^{s}$};
\draw (158,101.4) node [anchor=north west][inner sep=0.75pt]    {$V_{1}^{a}$};
\draw (433,162.4) node [anchor=north west][inner sep=0.75pt]    {$V_{2}^{s}$};
\draw (157,169.4) node [anchor=north west][inner sep=0.75pt]    {$V_{2}^{a}$};
\draw (434,233.4) node [anchor=north west][inner sep=0.75pt]    {$V_{3}^{s}$};
\draw (157,236.4) node [anchor=north west][inner sep=0.75pt]    {$V_{3}^{a}$};
\draw (159,304.4) node [anchor=north west][inner sep=0.75pt]    {$V_{4}^{a}$};
\draw (431,301.4) node [anchor=north west][inner sep=0.75pt]    {$V_{4}^{s}$};
\draw (434,372.4) node [anchor=north west][inner sep=0.75pt]    {$V_{5}^{s}$};
\draw (159,370.4) node [anchor=north west][inner sep=0.75pt]    {$V_{5}^{a}$};
\draw (187,434) node [anchor=north west][inner sep=0.75pt]   [align=left] {...};
\draw (188,415) node [anchor=north west][inner sep=0.75pt]   [align=left] {...};
\draw (412,413) node [anchor=north west][inner sep=0.75pt]   [align=left] {...};
\draw (413,432) node [anchor=north west][inner sep=0.75pt]   [align=left] {...};
\draw (188,452) node [anchor=north west][inner sep=0.75pt]   [align=left] {...};
\draw (411,452) node [anchor=north west][inner sep=0.75pt]   [align=left] {...};

\end{tikzpicture}
	\caption{Gabriel quiver when $\mathfrak{h}= V_{2}\times_{hs} \mathfrak{sl}_{2}(\C)$}
\end{figure}


\end{document}